\sloppy\pagestyle{plain}
\theoremstyle{definition}
\newtheorem*{example*}{Example}
\newtheorem{theorem}[equation]{Theorem}
\newtheorem{lemma}[equation]{Lemma}
\newtheorem{corollary}[equation]{Corollary}
\newtheorem{proposition}[equation]{Proposition}
\newtheorem{conjecture}[equation]{Conjecture}
\newtheorem*{conjecture*}{Conjecture}
\newtheorem*{question*}{Question}
\newtheorem*{problem*}{Problem}
\newtheorem*{theorem*}{Theorem}
\theoremstyle{remark}
\newtheorem{remark}[equation]{Remark}
\newtheorem*{remark*}{Remark}
\makeatletter\@addtoreset{equation}{section} \makeatother
\author{Ivan Cheltsov, Jihun Park, Constantin Shramov}
\title{Delta invariants of singular del Pezzo surfaces}
\begin{document}

\begin{abstract}
We use  the methods introduced by Cheltsov--Rubinstein--Zhang in \cite{CRZ18} to
estimate $\delta$-invariants of the seven singular del Pezzo surfaces with quotient singularities
studied by Cheltsov--Park--Shramov in \cite{CheltsovParkShramovJGA} that have $\alpha$-invariants less than  $\frac{2}{3}$.
As a result, we verify that each of these surfaces admits an orbifold K\"ahler--Einstein metric.
\end{abstract}

\subjclass[2010]{14J17, 14J45,  32Q20. }

\address{ \emph{Ivan Cheltsov}\newline \textnormal{School of Mathematics, The
University of Edinburgh
\newline \medskip  Edinburgh EH9 3JZ, UK
\newline
National Research University Higher School of Economics, Laboratory of Algebraic Geometry,
\newline
6 Usacheva street, Moscow, 117312, Russia
\newline
\texttt{I.Cheltsov@ed.ac.uk}}}

\address{ \emph{Jihun Park}\newline \textnormal{Center for Geometry and Physics, Institute for Basic Science
\newline \medskip 77 Cheongam-ro, Nam-gu, Pohang, Gyeongbuk, 37673, Korea \newline
Department of
Mathematics, POSTECH
\newline
77 Cheongam-ro, Nam-gu, Pohang, Gyeongbuk,  37673, Korea \newline
\texttt{wlog@postech.ac.kr}}}

\address{\emph{Constantin Shramov}\newline \textnormal{Steklov Mathematical Institute of Russian Academy of Sciences
\newline
\medskip 8 Gubkina street, Moscow, 119991, Russia
\newline
National Research University Higher School of Economics, Laboratory of Algebraic Geometry,
\newline
6 Usacheva street, Moscow, 117312, Russia
\newline \texttt{costya.shramov@gmail.com }}}

\maketitle

All varieties are assumed to be complex, projective and normal unless otherwise stated.

\section{Introduction}
\label{section:intro}

Let $S_d$ be a quasismooth and well-formed hypersurface in $\mathbb{P}(a_{0},a_{1},a_{2},a_{3})$ of degree $d$,
where~\mbox{$a_0\leqslant a_1\leqslant a_2\leqslant a_4$}.
Then $S_d$ is given by a quasihomogeneous polynomial
equation of degree $d$
$$
f\big(x,y,z,t\big)=0\subset\mathbb{P}(a_{0},a_{1},a_{2},a_{3})\cong\mathrm{Proj}\Big(\mathbb{C}\big[x,y,z,t\big]\Big),
$$
where $\mathrm{wt}(x)=a_{0}$, $\mathrm{wt}(y)=a_{1}$, $\mathrm{wt}(z)=a_{2}$, $\mathrm{wt}(t)=a_{3}$.
Here, being quasismooth simply means that the above equation defines a hypersurface that is singular only at the origin  in $\mathbb{C}^4$,
which implies that $S_d$ has at most cyclic quotient singularities.
On the other hand, being well-formed implies that
$$
K_{S_d}\sim_{\mathbb{Q}}\mathcal{O}_{\mathbb{P}(a_{0},a_{1},a_{2},a_{3})}\big(d-a_0-a_1-a_2-a_3\big),
$$
see~\cite[Theorem 3.3.4]{Do82}, \cite[6.14]{IF00}.

Put $I=a_0+a_1+a_2+a_3-d$ and suppose that $I$ is positive.
Then $S_d$ is a del Pezzo surfaces with at most quotient singularities.
If $S_d$ is smooth, then it always admits a K\"ahler--Einstein metric by \cite{TianSurface} (see also \cite{Cheltsov,Shi,Tosatti,OdakaSpottiSun}).
Singular del Pezzo surfaces with orbifold K\"ahler--Einstein metrics drew attention from Riemannian geometers
because they may lift to Sasakian--Einstein 5-manifolds through $S^1$-bundle structures.
Through this passage, Boyer, Galicki and Nakamaye yielded a significant amount of examples towards classification of simply-connected Sasakian--Einstein $5$-manifolds (see \cite{BoGaNa03,BoyerBook}).

For $I=1$, Johnson and Koll\'ar
presented an algorithm in \cite{JoKo01b} that produces the (infinite) list of all possibilities for the quintuple $(a_{0},a_{1},a_{2},a_{3},d)$
They also proved the following result:

\begin{theorem}[{\cite[Theorem~8]{JoKo01b}}]
\label{theorem:Johnson-Kollar}
Suppose that $S_d$ with $I=1$ is singular  and the quintuple $(a_{0},a_{1},a_{2},a_{3},d)$ is not one of the following four quintuples:
\begin{equation}\label{eq:four-quintuples}
(1,2,3,5,10), (1,3,5,7,15), (1,3,5,8,16),  (2,3,5,9,18).
\end{equation}
Then $S_d$ admits an orbifold K\"ahler--Einstein metric.
\end{theorem}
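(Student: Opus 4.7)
The plan is to apply Tian's criterion in its orbifold formulation: a two-dimensional Fano orbifold $S$ admits an orbifold K\"ahler--Einstein metric whenever the global log canonical threshold $\alpha(S)$ satisfies $\alpha(S)>\frac{2}{3}$. It therefore suffices to prove that, for every quintuple $(a_0,a_1,a_2,a_3,d)$ produced by the Johnson--Koll\'ar algorithm with $I=1$ other than the four quintuples listed in \eqref{eq:four-quintuples}, one has $\alpha(S_d)>\frac{2}{3}$.

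To bound the $\alpha$-invariant from below, I would show that every effective $\mathbb{Q}$-divisor $D\sim_{\mathbb{Q}}-K_{S_d}$ satisfies $\mathrm{lct}(S_d,D)>\frac{2}{3}$, by stratifying the possible log canonical centres into smooth points of $S_d$, the cyclic quotient singular points supported above the coordinate vertices of $\mathbb{P}(a_0,a_1,a_2,a_3)$, and the coordinate curves $\{x_i=0\}\cap S_d$. At a smooth point, a Skoda-type multiplicity bound combined with the degree formula $(-K_{S_d})^2=\frac{d}{a_0a_1a_2a_3}$ yields the required estimate whenever the weighted degree is small enough. At a cyclic quotient singularity of type $\frac{1}{r}(1,b)$, the multiplicity of an anticanonical $\mathbb{Q}$-divisor can be bounded by an explicit function of $r$ using the local intersection theory on the orbifold chart. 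Along the coordinate curves, a Bertini argument on pencils generated by low-degree weighted monomials in the anticanonical ring shows that the general member is log canonical.

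The principal obstacle is that the Johnson--Koll\'ar list is infinite, so one requires a uniform strategy rather than a case-by-case verification. In practice the argument organises itself according to the arithmetic of $(a_0,a_1,a_2,a_3,d)$: in each series one exhibits a section of $\mathcal{O}_{S_d}(-mK_{S_d})$ of bounded $m$ whose zero locus is sufficiently transverse to every potential log canonical centre, from which the lower bound $\alpha(S_d)>\frac{2}{3}$ follows. The four exceptional quintuples in \eqref{eq:four-quintuples} are precisely those where such a section fails to exist---in each case there is a distinguished anticanonical curve through a singular point with $\mathrm{lct}\leqslant\frac{2}{3}$---so Tian's criterion is inapplicable, and an alternative input such as the $\delta$-invariant studied in the present paper is required for those cases.
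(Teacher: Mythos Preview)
This theorem is not proved in the present paper; it is quoted from Johnson--Koll\'ar \cite{JoKo01b}, and the paper's only commentary on the proof is the paragraph immediately following the statement, which says exactly what you say in your first paragraph: the argument proceeds by verifying Tian's criterion $\alpha(S_d)>\frac{2}{3}$. So at the level of strategy your proposal agrees with what the paper records.

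Two remarks on the details of your sketch. First, the stratification you describe (smooth points, quotient points, coordinate curves) and the pencil/Bertini mechanism are indeed the ingredients of the Johnson--Koll\'ar argument, but the actual organisation in \cite{JoKo01b} is more arithmetic than geometric: they produce, for every point of $S_d$, an explicit anticanonical section built from weighted monomials, and the uniform handling of the infinite list comes from numerical inequalities among $a_0,a_1,a_2,a_3$ rather than from a general log canonical centre analysis. Your outline is compatible with this but does not quite isolate the mechanism that makes the infinite list tractable.

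Second, your final paragraph mischaracterises the four exceptional quintuples. It is not true that each of them carries an anticanonical curve with $\mathrm{lct}\leqslant\frac{2}{3}$; what is true is only that Johnson--Koll\'ar's particular estimate fails to yield $\alpha>\frac{2}{3}$ there. Indeed, Araujo \cite{Ara02} later showed $\alpha>\frac{2}{3}$ for $(1,2,3,5,10)$ and for generic $(1,3,5,7,15)$, and \cite{CheltsovParkShramovJGA} showed the same for $(1,3,5,8,16)$ and $(2,3,5,9,18)$. The $\delta$-invariant is genuinely needed only for the special $(1,3,5,7,15)$ surface whose equation lacks $yzt$, where $\alpha=\frac{8}{15}<\frac{2}{3}$.
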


Its proof uses the criterion given by the $\alpha$-invariant
(for the definition, see \cite[Definition~1.2]{CheltsovShramovUMN}) of the surface $S_d$, see \cite{Ti87,Nadel,DeKo01}.
It says that the surface $S_d$ admits an (orbifold) K\"ahler--Einstein metric if the inequality
\begin{equation}
\label{equation:old-alpha}
\alpha\big(S_d\big)>\frac{2}{3}
\end{equation}
holds, where $\alpha(S_d)$ is the $\alpha$-invariant of the surface $S_d$.
Indeed, Johnson and Koll\'ar verified~\eqref{equation:old-alpha} in the case when
$I=1$, the surface $S_d$ is singular, and the quintuple $(a_{0},a_{1},a_{2},a_{3},d)$ is not one of the four
exceptions~\eqref{eq:four-quintuples}.
Two of the four remaining cases~\eqref{eq:four-quintuples} have been treated in \cite{Ara02} by Araujo, who proved the following result:

\begin{theorem}[{\cite[Theorem~4.1]{Ara02}}]
\label{theorem:Araujo}
In the following two cases:
\begin{itemize}
\item $(a_{0},a_{1},a_{2},a_{3},d)= (1,2,3,5,10)$,
\item $(a_{0},a_{1},a_{2},a_{3},d)=(1,3,5,7,15)$ and the equation of $S_d$ contains~$yzt$,
\end{itemize}
the inequality $\alpha(S_d)>\frac{2}{3}$ holds. In particular, $S_d$ admits an orbifold K\"ahler--Einstein metric.
\end{theorem}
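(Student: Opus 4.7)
The plan is to establish the inequality $\alpha(S_d)>\frac{2}{3}$ by the standard contradiction argument using log canonical thresholds. Assume, to the contrary, that there exist an effective $\mathbb{Q}$-divisor $D\sim_{\mathbb{Q}}-K_{S_d}$ and a rational number $\lambda\leqslant\frac{2}{3}$ such that the log pair $(S_d,\lambda D)$ fails to be log canonical at some point $p\in S_d$. Since $(-K_{S_d})^{2}=\frac{1}{3}$ in the first case and $(-K_{S_d})^{2}=\frac{1}{7}$ in the second, the intersection of $D$ with any irreducible curve in $|-nK_{S_d}|$ not contained in $\mathrm{Supp}(D)$ is very small, which already rules out one-dimensional non-log-canonical centres and heavily constrains the location of $p$.

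The next step is to use the coordinate projections from the ambient weighted projective space, together with the low-degree members of $|-nK_{S_d}|$ obtained from monomials in $\mathbb{C}[x,y,z,t]$ of small weighted degree, to cut out test curves on $S_d$. For $(1,2,3,5,10)$ the divisors $\{x=0\}\cap S_d$ and $\{y=0\}\cap S_d$ and their irreducible components should suffice, giving enough curves through every candidate point $p$ to derive a contradiction via $\operatorname{mult}_{p}D\cdot\operatorname{mult}_{p}C\leqslant D\cdot C$ combined with the standard inequality $\operatorname{mult}_{p}D>\frac{2}{\lambda}$ at a smooth non-log-canonical point and its orbifold analogue at a quotient singularity. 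For $(1,3,5,7,15)$ the hypothesis that the defining equation contains the monomial $yzt$ is precisely what guarantees that the divisor $\{x=0\}\cap S_d$ splits into reduced rational components joining the three quotient singularities, so that a sufficient supply of test curves is available; without this monomial the argument breaks down, which explains why this restriction on the second case of~\eqref{eq:four-quintuples} is imposed in the statement.

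The main obstacle will be the quotient singularity of highest index, namely the point of type $\frac{1}{5}(1,2)$ in the first case and $\frac{1}{7}(1,3)$ in the second, where the small value of $(-K_{S_d})^{2}$ makes the crude multiplicity bound barely insufficient. The plan is to resolve this by passing to the weighted blow-up $\sigma\colon\widetilde{S}\to S_d$ of $p$ extracting the exceptional divisor $E$ of smallest discrepancy, writing $\sigma^{*}D=\widetilde{D}+mE$ with $m$ controlled by $\sigma^{*}(-K_{S_d})\cdot E$, and transferring the non-log-canonical centre of $(S_d,\lambda D)$ to $\widetilde{S}$. A further intersection of $\widetilde{D}$ with the proper transform of a suitable coordinate curve through $p$ should then give the sharper estimate needed to close the argument. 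Once $\alpha(S_d)>\frac{2}{3}$ is established, the existence of an orbifold K\"ahler--Einstein metric follows from the criterion~\eqref{equation:old-alpha} recorded above.
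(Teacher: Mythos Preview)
The paper does not prove this theorem at all: it is quoted verbatim from Araujo's paper \cite{Ara02} as background for the history of the problem, and no argument is supplied here. So there is nothing in the present paper to compare your proposal against.

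That said, your outline is broadly the right shape for how such $\alpha$-invariant bounds are established (and it is close in spirit to what is done in Section~\ref{section:case-a} for the complementary case $b_1=0$), but as written it is only a plan, not a proof, and it contains a couple of inaccuracies. The inequality you invoke at a smooth non-log-canonical point should be $\operatorname{mult}_p D>\frac{1}{\lambda}$, not $\frac{2}{\lambda}$ (compare Lemma~\ref{lemma:mult-1}); with $\lambda\leqslant\frac{2}{3}$ this gives $\operatorname{mult}_p D>\frac{3}{2}$, which is the threshold you actually need against the test curves. Also, the singularity of $S_{15}$ at $O_t$ is of type $\frac{1}{7}(3,5)$ (equivalently $\frac{1}{7}(1,2)$), not $\frac{1}{7}(1,3)$; see the discussion before Lemma~\ref{lemma:1-P-O-t}. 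Finally, your description of what the monomial $yzt$ does is not quite right: its presence does not make $C_x$ split, but rather ensures that the curve $C_x=\{x=0\}$ has a milder singularity at $O_t$, so that $\mathrm{lct}(S_{15},C_x)=1$ instead of $\frac{8}{15}$ (see the displayed computation at the start of Section~\ref{section:case-a}). If you want a genuine proof you would need to fill in the actual intersection computations and the blow-up analysis, or simply defer to \cite{Ara02} as the present paper does.
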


The remaining two cases of~\eqref{eq:four-quintuples} have been dealt with in the  paper \cite{CheltsovParkShramovJGA}.
In this paper, we succeeded in estimating  their
$\alpha$-invariants from below by large enough numbers for the criterion~\eqref{equation:old-alpha}.
To be precise, we proved the following result:

\begin{theorem}[{\cite[Theorem~1.10]{CheltsovParkShramovJGA}}]
\label{theorem:old-I-1}
Suppose that $(a_{0},a_{1},a_{2},a_{3},d)=(1,3,5,8,16)$ or~\mbox{$(2,3,5,9,18)$}.
Then $\alpha(S_d)>\frac{2}{3}$. In particular, $S_d$ admits an orbifold K\"ahler--Einstein metric.
\end{theorem}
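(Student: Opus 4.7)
By the $\alpha$-invariant criterion \eqref{equation:old-alpha} of Tian--Nadel, it is enough to prove the inequality $\alpha(S_d)>\frac{2}{3}$ for each of the two quintuples. I would argue by contradiction: suppose there exists an effective $\mathbb{Q}$-divisor $D\sim_{\mathbb{Q}}-K_{S_d}$ and a rational $\lambda\leqslant\frac{2}{3}$ such that the log pair $(S_d,\lambda D)$ fails to be log canonical at some point $P\in S_d$, and seek an intersection-theoretic contradiction. First, I would write down the generic equations allowed by quasi-smoothness and well-formedness for $S_{16}\subset\mathbb{P}(1,3,5,8)$ and $S_{18}\subset\mathbb{P}(2,3,5,9)$, then determine the singular locus: in both cases the singularities sit at the vertices of the ambient weighted projective space that lie on $S_d$, giving cyclic quotient singularities of types $\frac{1}{3}(1,2),\frac{1}{5}(1,2),\frac{1}{8}(1,3)$ in the first case and $\frac{1}{2}(1,1),\frac{1}{3}(1,2),\frac{1}{5}(1,2),\frac{1}{9}(1,4)$ in the second.

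Next, I would assemble the small collection of natural effective anticanonical curves on $S_d$, obtained by intersecting with the hyperplanes $\{x=0\}$, $\{y=0\}$, $\{z=0\}$ (and in the second case $\{t=0\}$) of the ambient weighted projective space; decompose each into irreducible components; and record the intersection numbers of these components together with their passage through the singular points. Because $I=1$, one has $(-K_{S_d})^2=\frac{1}{a_0a_1a_2a_3}$, which equals $\frac{1}{120}$ and $\frac{1}{270}$ respectively — remarkably small, which is exactly what makes the standard estimates powerful. A short computation with the above curves would rule out that the non--log-canonical locus of $(S_d,\lambda D)$ contains a curve $C$: the inequality $\mathrm{mult}_C(D)\geqslant\frac{1}{\lambda}\geqslant\frac{3}{2}$ combined with $D\sim_{\mathbb{Q}}-K_{S_d}$ and explicit intersection numbers of $C$ with other effective anticanonical divisors yields a contradiction. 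Hence the non-lc locus consists of an isolated point $P$.

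For the isolated point $P$, I would split into cases according to whether $P$ is a smooth point, lies on one of the known anticanonical curves constructed above, or is one of the quotient singular points. In each case the plan is standard: apply the local log canonical inequality, which gives $\mathrm{mult}_P(D)>\frac{1}{\lambda}$ at a smooth point and the weighted analogue $\mathrm{mult}_P(D)>\frac{1}{n\lambda}$ (suitably interpreted) at a $\frac{1}{n}(a,b)$-point, and contradict it by intersecting $D$ with a well-chosen effective divisor passing through $P$ with controlled multiplicities. When $P$ lies on several special curves simultaneously, I would replace $D$ by $D'=\frac{1}{1-c}(D-cC)$ for a suitable common component $C$ of $D$ with coefficient $c$, preserving $D'\sim_{\mathbb{Q}}-K_{S_d}$ but reducing the problem to a divisor not containing $C$.

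The main obstacle is the quotient singularity of largest index — the $\frac{1}{8}(1,3)$-point in the first case and the $\frac{1}{9}(1,4)$-point in the second. At these points the naive weighted-multiplicity estimate only barely fails to give $\alpha>\frac{2}{3}$, so one cannot conclude by a single intersection with a low-degree curve. The way I would handle this is to pass to the minimal resolution (or an appropriate weighted blow-up) of the singularity, use inversion of adjunction on the exceptional chain to translate non-log-canonicity into a numerical condition on the strict transform of $D$, and then combine this with intersections against the strict transforms of two independent anticanonical curves through $P$ whose local structure at $P$ can be read off the defining equation of $S_d$. Once these two remaining cases are closed, the existence of an orbifold K\"ahler--Einstein metric on $S_d$ follows immediately from \eqref{equation:old-alpha}.
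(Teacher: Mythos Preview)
The paper does not contain a proof of this statement at all: Theorem~\ref{theorem:old-I-1} is quoted verbatim from \cite[Theorem~1.10]{CheltsovParkShramovJGA} and is used in the present paper only as background to motivate the study of the remaining cases. There is therefore no ``paper's own proof'' to compare your proposal against; the actual argument lives in the cited reference.

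That said, your outline is exactly the template that \cite{CheltsovParkShramovJGA} follows, and it is the same methodology the present paper deploys in Sections~\ref{section:case-a}--\ref{section:case-c} for the $\delta$-invariant estimates: fix $D\sim_{\mathbb{Q}}-K_{S_d}$, assume a non-lc point $P$, eliminate smooth points and low-index singular points via Lemmas~\ref{lemma:mult-1}--\ref{lemma:inversion-of-adjunction-orbifold} and intersection with coordinate-hyperplane curves, use the convexity trick of \cite[Remark~2.22]{CheltsovShramovUMN} to strip off common components, and finally resolve the worst singular point by a weighted blow-up and apply inversion of adjunction on the exceptional curve. So as a \emph{plan} your proposal is correct in spirit and matches what the cited source does.

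What you have written, however, is not a proof but a sketch, and the honest content of the theorem is precisely the case you flag as ``the main obstacle'': the singular point of highest index. Your final paragraph says only that you would take a weighted blow-up, invoke inversion of adjunction, and intersect with two anticanonical curves; this is where all the genuine work is, and in \cite{CheltsovParkShramovJGA} these cases require a careful case-by-case analysis with specific choices of curves and explicit numerical bounds (sometimes iterated blow-ups). Until those computations are actually carried out and the inequalities checked, the argument is incomplete. If you intend this as a submitted proof rather than a roadmap, you need to supply that analysis in full.
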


In particular, if $I=1$, then $S_d$ admits an orbifold K\"ahler--Einstein metric except possibly the case when
$(a_{0},a_{1},a_{2},a_{3},d)=(1,3,5,7,15)$ and the defining equation of the surface $S_d$ does not contain~$yzt$.
Note that in the latter case one has
$$
\alpha(S_d)=\frac{8}{15}<\frac{2}{3}
$$
by \cite[Theorem~1.10]{CheltsovParkShramovJGA}, so that the criterion by the $\alpha$-invariant  could not be applied.

Meanwhile, since 2010 we have witnessed dramatic developments in the study of the Yau--Tian--Donaldson conjecture concerning
the existence of K\"ahler--Einstein metrics on Fano manifolds and stability.
The challenge to the  conjecture has been heightened  by Chen, Donaldson, Sun and Tian who have completed the proof for the case of Fano manifolds with anticanonical polarisations \cite{CDS15,TianYTD}.
Following this celebrated achievement, useful technologies have been introduced to determine whether given Fano varieties are K\"ahler--Einstein or not, via the theorem of Chen--Donaldson--Sun and Tian.

Recently Fujita and Odaka introduced a new invariant of Fano varieties, which they called $\delta$-invariant (for the definition, see \cite[Definition~1.2]{FujitaOdaka}),
that serves as a strong criterion for uniform K-stability (see \cite{FujitaOdaka}).

\begin{theorem}[{\cite{FujitaOdaka, BJ17}}]
\label{theorem:delta}
Let $X$ be a Fano variety with at most Kawamata log terminal singularities.
Then $X$ is uniformly K-stable if and only if $\delta(X)>1$.
\end{theorem}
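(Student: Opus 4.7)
The plan is to reduce the statement to a combination of two known frameworks: a valuative reformulation of $\delta$, and a valuative criterion for uniform K-stability. Neither of these is proven from scratch in a short argument; rather, the content of the theorem is that they fit together cleanly.

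First I would recall that by definition $\delta(X)=\limsup_{m\to\infty}\delta_m(X)$, where $\delta_m(X)$ is the infimum of $\mathrm{lct}(X;D)$ over all $m$-basis type $\mathbb{Q}$-divisors $D=\frac{1}{mN_m}\sum_{i=1}^{N_m}\mathrm{div}(s_i)$ coming from a basis $\{s_i\}$ of $H^0(X,-mK_X)$, $N_m=h^0(X,-mK_X)$. Computing log canonical thresholds via prime divisors over $X$ gives
$$\delta_m(X)=\inf_E\frac{A_X(E)}{S_m(E)},$$
where $S_m(E)$ is the supremum of $\mathrm{ord}_E(D)$ over $m$-basis type divisors. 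The core input of Fujita--Odaka, strengthened by Blum--Jonsson, is that $S_m(E)\to S_X(E):=\frac{1}{(-K_X)^n}\int_0^\infty\mathrm{vol}(-K_X-tE)\,dt$ uniformly in $E$, yielding the valuative formula
$$\delta(X)=\inf_E\frac{A_X(E)}{S_X(E)},$$
where $E$ ranges over all prime divisors over $X$.

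Next I would invoke the valuative criterion of Fujita--Li: $X$ is uniformly K-stable if and only if there exists $\varepsilon>0$ such that $A_X(E)\geqslant(1+\varepsilon)S_X(E)$ for every prime divisor $E$ over $X$. This is proved by matching the non-Archimedean Ding / Mabuchi energy of a test configuration to the $\beta$-invariant $A_X(E)-S_X(E)$ of the divisor extracted by its central fibre, together with Fujita's reduction to dreamy divisors and Li's filtration argument for uniformity. Combining the valuative formula for $\delta$ with this criterion gives the equivalence
$$X\ \text{uniformly K-stable}\ \Longleftrightarrow\ \inf_E\frac{A_X(E)}{S_X(E)}>1\ \Longleftrightarrow\ \delta(X)>1,$$
using the elementary fact that an infimum strictly exceeds $1$ precisely when it is bounded below by some $1+\varepsilon$.

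The main obstacle is the uniform convergence $S_m(E)\to S_X(E)$. Establishing it requires an Okounkov body argument: one fixes an admissible flag containing $E$ and compares lattice-point counts in horizontal slices of the Okounkov body with integrals of the restricted volume function, then shows the resulting error is controlled independently of $E$ by a Riemann--Roch--type estimate on $h^0(X,-mK_X-jE)$. The requirement of uniformity across all (infinitely many) prime divisors $E$ over $X$ is the delicate point, and it is precisely here that the Kawamata log terminal assumption enters, allowing one to pass to a log resolution on which the analysis of graded linear series behaves well.
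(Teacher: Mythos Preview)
The paper does not provide its own proof of this theorem: it is stated with citations to \cite{FujitaOdaka} and \cite{BJ17} and used as a black box. There is therefore no paper-proof to compare your proposal against.

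That said, your outline is a fair summary of how the result is established in the cited references. The decomposition into (i) the valuative formula $\delta(X)=\inf_E A_X(E)/S_X(E)$ and (ii) the Fujita--Li valuative criterion for uniform K-stability is exactly the architecture of the argument, and you correctly identify the uniform convergence $S_m(E)\to S_X(E)$ (independent of $E$) as the main technical input supplied by Blum--Jonsson. One small correction: Blum--Jonsson show that the limit $\lim_m\delta_m(X)$ exists, so the $\limsup$ in your definition is in fact a limit; this matters for the clean identification of $\delta$ with the valuative infimum. Otherwise your sketch accurately reflects the literature, and nothing further is needed for the purposes of this paper.
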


This powerful tool has been practiced for del Pezzo surfaces in \cite{ParkWon,CRZ18,CZ}.
Around the same time, Li, Tian and Wang proved in \cite{LTW17,LTW19} that the result of Chen, Donaldson, Sun and Tian
also holds for some singular Fano varieties.
In particular, it holds for del Pezzo surfaces with quotient singularities.
Thus, if $\delta(S_d)>1$, then the surface $S_d$ admits an (orbifold) K\"ahler--Einstein metric.
Note that $3\alpha(S_d)\geqslant\delta(S_d)\geqslant \frac{3}{2}\alpha(S_d)$ by \cite[Theorem~A]{BJ17}.

Now we are strongly reinforced by these new technologies, so that we could complete
the study of existence of an (orbifold) K\"ahler--Einstein metric on the surface $S_d$ in the case $I=1$
started by Johnson and Koll\'ar in \cite{JoKo01b}.
In this paper, we prove the following result:

\begin{theorem}
\label{theorem:main-I-1}
Let $S_d$ be a quasi-smooth hypersurface in $\mathbb{P}(1,3,5,7)$ of degree $15$ such that its
defining equation does not contain~$yzt$.
Then $\delta(S_d)\geqslant\frac{6}{5}$.
In particular, the surface $S_d$ admits an orbifold K\"ahler--Einstein metric.
\end{theorem}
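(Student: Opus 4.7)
The plan is to establish the inequality $\delta(S_d)\geqslant 6/5$ directly; combined with Theorem~\ref{theorem:delta} and \cite{LTW17,LTW19}, this yields the existence of the orbifold K\"ahler--Einstein metric. The framework is the Abban--Zhuang flag method as used in \cite{CRZ18}, applied via the standard localisation $\delta(S_d)=\inf_{p\in S_d}\delta_p(S_d)$; at every $p$ I would exhibit a curve through $p$ (or, at the singular point, a divisorial valuation centred at $p$) and estimate the corresponding $A/S$ ratio.

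First comes the geometric setup. The hypothesis that $yzt$ is absent, combined with inspection of the degree-$15$ monomials in $\mathbb{P}(1,3,5,7)$ and quasismoothness, forces the coefficients of $y^5$, $z^3$ and $xt^2$ in $f$ to be non-zero. Hence $S_d$ misses $P_x,P_y,P_z$ but contains $P_t=(0{:}0{:}0{:}1)$ as its unique singular point, of type $\frac{1}{7}(3,5)$; further $(-K_{S_d})^2=\frac{1}{7}$, $-K_{S_d}\sim\mathcal{O}_{S_d}(1)$, and the distinguished curves $L:=S_d\cap\{x=0\}\in|{-K_{S_d}}|$ (an irreducible rational curve with an $E_8$-cusp at $P_t$ and smooth elsewhere), $Y:=S_d\cap\{y=0\}\in|{-3K_{S_d}}|$, and $Z:=S_d\cap\{z=0\}\in|{-5K_{S_d}}|$ all pass through $P_t$ and are smooth there in the orbifold sense. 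The easy cases are as follows. For a smooth $p\not\in L$, the unique member $D\in|{-3K_{S_d}}|$ of the pencil $\langle x^3,y\rangle$ through $p$ satisfies $-K_{S_d}-tD\sim(1-3t)(-K_{S_d})$, so the trivial Zariski decomposition gives $S_{S_d}(D)=\frac{1}{9}$ and thus $\delta_p(S_d)\geqslant 9$. For a smooth $p\in L\setminus\{P_t\}$, the Abban--Zhuang flag $(p,L)$ based on $-K_{S_d}-tL=(1-t)(-K_{S_d})$ yields $S_{S_d}(L)=\frac{1}{3}$ and $S(W^L_{\bullet,\bullet};p)=\frac{1}{21}$, hence $\delta_p(S_d)\geqslant 3$.

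The main obstacle is the singular point $P_t$. Here I would perform the $(3,5)$-weighted orbifold blow-up $\pi\colon\widetilde{S_d}\to S_d$ at $P_t$, with exceptional divisor $E$ satisfying $A_{S_d}(E)=\frac{8}{7}$ and $E^2=-\frac{7}{15}$; the coarse space of $E$ is $\mathbb{P}(3,5)$, carrying two orbifold points of orders $3$ and $5$. The strict transform $\widetilde L$ is a $(-2)$-curve meeting $E$ transversely in one smooth point $q_L$. The central computation is the piecewise Zariski decomposition of $\pi^*(-K_{S_d})-tE$ on $\widetilde{S_d}$, which is nef on an initial interval $[0,t_0]$ and then acquires $\widetilde L$ as the negative part on $[t_0,\tau]$ until the pseudoeffective threshold $\tau$. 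Integrating the volume function gives $S_{S_d}(E)$ and the first Abban--Zhuang bound $A_{S_d}(E)/S_{S_d}(E)\geqslant 6/5$. The flag refinement requires evaluating the restricted function
\[
S(W^E_{\bullet,\bullet};q)=\tfrac{2}{(-K_{S_d})^2}\!\!\int(P(t)\cdot E)\,\mathrm{ord}_q(N(t)|_E)\,dt+\tfrac{1}{(-K_{S_d})^2}\!\!\int(P(t)\cdot E)^2\,dt
\]
at the two orbifold points of $E$, at $q_L$, and at a generic point, and comparing $A_E(q)/S(W^E_{\bullet,\bullet};q)$ with $6/5$ in each case. The hard part will be the intersection-theoretic bookkeeping on $\widetilde{S_d}$---keeping track of the two residual cyclic-quotient singularities inherited from the toric blow-up, correctly identifying the transition $t_0$ where $\widetilde L$ enters the negative part, and ruling out any divisorial valuation centred at a special point of $E$ with $A/S<6/5$. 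Combining these bounds with the earlier two cases yields $\delta(S_d)\geqslant 6/5$.
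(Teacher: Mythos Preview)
Your approach is workable but genuinely different from the paper's. The paper does not compute local $\delta$-invariants or $S$-invariants at all; instead it follows the earlier Fujita--Odaka/\cite{CRZ18} strategy of showing directly that $(S_d,\tfrac{6}{5}D)$ is log canonical for every $k$-basis-type divisor $D\sim_{\mathbb{Q}}-K_{S_d}$ with $k\gg 0$. Writing $D=aC_x+\Delta$ where $C_x=\{x=0\}$, the single Fujita--Odaka estimate (Corollary~\ref{corollary:a-simple}) gives $a\leqslant\tfrac{1}{3}+\epsilon_k<\tfrac{8}{21}$, and the heart of the argument (Proposition~\ref{proposition:1}) is that this bound on $a$ alone forces log canonicity everywhere: outside $C_x$ by a multiplicity bound coming from a pencil in $|{-5K_{S_d}}|$; on $C_x\setminus\{O_t\}$ by inversion of adjunction along $C_x$; and at $O_t$ by performing the same $(3,5)$-weighted blow-up you propose, but then arguing via the elementary bound $m\leqslant\tfrac{1-a}{7}$ on the multiplicity of $\Delta$ along $E$ together with inversion of adjunction on $E$, rather than by integrating the Zariski decomposition of $\pi^*(-K_{S_d})-tE$. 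Your Abban--Zhuang route trades this single coefficient bound for explicit pointwise values of $\delta_p$ and has the merit of giving sharper information, at the cost of the piecewise volume computation on $\widetilde{S_d}$ and the orbifold adjunction on $E$ at its two singular points that you correctly flag as the hard part.

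One small gap in your sketch: the inference ``$S_{S_d}(D)=\tfrac{1}{9}$, thus $\delta_p(S_d)\geqslant 9$'' for $p\notin L$ is incomplete, since the Abban--Zhuang inequality is $\delta_p\geqslant\min\bigl\{A(D)/S(D),\,A_{D,\Delta_D}(p)/S(W^{D}_{\bullet,\bullet};p)\bigr\}$ and you have not bounded the second term; moreover the pencil member through $p$ need not be irreducible or smooth at $p$. The resulting bound is still comfortably above $\tfrac{6}{5}$, but the refinement term must be addressed. (Incidentally, \cite{CRZ18} itself uses the basis-type method of the present paper, not the later Abban--Zhuang formalism you are invoking.)
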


\begin{corollary}
\label{corollary:main-I-1}
If $I=1$, then $S_d$ admits an orbifold K\"ahler--Einstein metric.
\end{corollary}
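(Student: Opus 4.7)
The plan is to show that $\delta_p(S_d)\geq 6/5$ at every point $p\in S_d$, from which $\delta(S_d)=\inf_{p}\delta_p(S_d)\geq 6/5$ follows. Following the strategy of \cite{CRZ18}, one bounds $\delta_p(S_d)$ by the infimum of the ratios $A_{S_d}(F)/S_{S_d}(F)$ over prime divisors $F$ over $S_d$ centred at $p$, and estimates such ratios by adjoining through a carefully chosen curve $C\subset S_d$ passing through $p$ so that the problem either reduces to the ``first-order'' ratio $A_{S_d}(C)/S_{S_d}(C)$ or to a one-dimensional $\delta$-computation on $C$ at $p$.

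My first step would be to pin down the singular locus of $S_d$. Because $y^5$ and $z^3$ must appear in the defining equation $f$ by quasi-smoothness at $[0\!:\!1\!:\!0\!:\!0]$ and $[0\!:\!0\!:\!1\!:\!0]$, neither of these vertices lies on $S_d$. By contrast, no weight-$15$ monomial is a pure power of $t$, so $P_t:=[0\!:\!0\!:\!0\!:\!1]$ lies on $S_d$ automatically, and quasi-smoothness at $P_t$ then forces the coefficient of $xt^2$ in $f$ to be nonzero. Hence $S_d$ has a unique singular point $P_t$, a cyclic quotient singularity of type $\tfrac{1}{7}(1,q)$ for appropriate $q$, and is smooth elsewhere.

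My second step would be to introduce the auxiliary curve
\[
C:=S_d\cap\{x=0\}.
\]
Precisely because $yzt$ is absent by hypothesis, $f(0,y,z,t)=c_0y^5+c_1z^3$ with $c_0c_1\neq 0$, so $C=\{c_0y^5+c_1z^3=0\}\subset\mathbb{P}(3,5,7)$ is a rational curve that is smooth away from $P_t$ and has an analytic cusp of type $y^5+z^3=0$ at $P_t$. Since $C\sim -K_{S_d}$ and $(-K_{S_d})^2=\tfrac{1}{7}$, one computes $S_{S_d}(C)=\tfrac{1}{3}$, so $A_{S_d}(C)/S_{S_d}(C)=3$ is harmless. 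For every smooth point $p\in S_d$, combining this with a one-dimensional $\delta$-estimate on $C$ (when $p\in C$) or on a general pencil from $|{-K_{S_d}}|$ through $p$ (when $p\notin C$) should yield $\delta_p(S_d)\geq 6/5$ by routine calculation.

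The principal obstacle is the point $P_t$ itself, where $S_d$ is singular and $C$ is cuspidal. To handle it, I would construct a suitable weighted blow-up $\pi\colon\tilde S\to S_d$ at $P_t$ that resolves both the $\tfrac{1}{7}$-quotient singularity and (at least partially) the cusp $y^5+z^3$ of $C$, iterating if a single weighted blow-up does not suffice. For each exceptional prime $E$ in this resolution I would compute $A_{S_d}(E)$ combinatorially from the weighted-blow-up data together with
\[
S_{S_d}(E)=7\int_{0}^{\tau(E)}\mathrm{vol}\bigl(\pi^{\ast}(-K_{S_d})-tE\bigr)\,dt
\]
via an explicit Zariski decomposition of $\pi^{\ast}(-K_{S_d})-tE$. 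The extremal $E$ is expected to be the first exceptional curve produced in the chain, giving $A_{S_d}(E)/S_{S_d}(E)=6/5$. This cusp---forced by the missing $yzt$---is exactly what caps $\delta(S_d)$ at $6/5$ (and what dragged $\alpha(S_d)$ down to $8/15$), so the delicate bookkeeping among all exceptional divisors of the resolution, verifying that no other one gives a smaller ratio, is the heart of the argument.
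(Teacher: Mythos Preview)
Your proposal addresses the content of Theorem~\ref{theorem:main-I-1} (the single remaining case $(1,3,5,7,15)$ with $yzt$ absent), not the corollary as stated: Corollary~\ref{corollary:main-I-1} is just the combination of that theorem with Theorems~\ref{theorem:Johnson-Kollar}, \ref{theorem:Araujo}, and~\ref{theorem:old-I-1}, which already disposed of every other $I=1$ hypersurface. The paper gives no separate argument for the corollary beyond this.

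For the substantive part (Theorem~\ref{theorem:main-I-1}), your route is genuinely different from the paper's. You propose the Abban--Zhuang local-$\delta$ framework: bound $\delta_p$ via $A(C)/S(C)$ plus a one-dimensional adjunction term, and at $O_t$ compute $A(E)/S(E)$ for exceptional divisors of a resolution. The paper instead works directly with $k$-basis-type divisors $D$: it bounds the coefficient $a=\mathrm{ord}_{C_x}(D)\leqslant\tfrac{1}{3}+\epsilon_k$ via the Fujita--Odaka integral (Corollary~\ref{corollary:a-simple}), and then checks by hand that $(S,\tfrac{6}{5}D)$ is log canonical everywhere, using inversion of adjunction and a single weighted $(3,5)$-blow-up at $O_t$ (Proposition~\ref{proposition:1} and Lemmas~\ref{lemma:1-smooth-points}--\ref{lemma:1-P-O-t}). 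The paper's argument is shorter precisely because the only volume input is the trivial bound on $a$; no $S$-invariant of any divisor over $O_t$ is ever computed.

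Two concrete problems with your plan. First, your expectation that the first exceptional curve $E$ of the $(3,5)$-weighted blow-up gives $A(E)/S(E)=\tfrac{6}{5}$ is wrong: one has $A(E)=\tfrac{8}{7}$, and a Zariski-decomposition computation of $\mathrm{vol}(f^*(-K_S)-tE)$ on $[0,\tfrac{15}{7}]$ yields $S(E)=\tfrac{16}{21}$, hence $A(E)/S(E)=\tfrac{3}{2}$; so $E$ is not the extremal divisor you predict, and indeed the paper's bound $\delta\geqslant\tfrac{6}{5}$ is probably not sharp. Second, and more seriously, ``adjoining through $C=C_x$'' at $O_t$ is not available in the standard Abban--Zhuang form because $(S,C_x)$ is not plt there: $C_x$ has the cusp $y^5+z^3=0$ inside the $\tfrac{1}{7}(3,5)$ chart. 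You would have to pass to the blow-up $\widetilde{S}$ first and run the flag method with $(E,\text{point})$ there, which is a different and more involved computation than your outline suggests.
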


For  $I\geqslant 2$, the problem of existence of an orbifold K\"ahler--Einstein metric on the surface $S_d$
was first studied by Boyer, Galicki and Nakamaye  in \cite{BoGaNa03}.
In this case, there is no reasonable classification similar to that obtained by Johnson and Koll\'ar in \cite{JoKo01b};
note however that \cite{Erik} presents an algorithm that produces
the (infinite) list of all possibilities for the quintuple $(a_{0},a_{1},a_{2},a_{3},d)$ for every fixed $I\geqslant 2$.
In \cite{BoGaNa03,CheltsovParkShramovJGA,ChSh09c},
the existence of an orbifold K\"ahler--Einstein metric has been proved for (infinitely) many surfaces $S_d$ with $I\geqslant 2$.
However, in the following six cases their method did not work:
\begin{enumerate}
\item $(a_{0},a_{1},a_{2},a_{3},d)=(2,3,4,5,12)$ and the equation of $S_d$ does not contain $yzt$;
\item $(a_{0},a_{1},a_{2},a_{3},d)=(7,10,15,19,45)$;
\item $(a_{0},a_{1},a_{2},a_{3},d)=(7,18,27,37,81)$;
\item $(a_{0},a_{1},a_{2},a_{3},d)=(7,15,19,32,64)$;
\item $(a_{0},a_{1},a_{2},a_{3},d)=(7,19,25,41,82)$;
\item $(a_{0},a_{1},a_{2},a_{3},d)=(7,26,39,55,117)$.
\end{enumerate}
In this paper, we use $\delta$-invariants to show that $S_d$ is K\"ahler--Einstein in these six cases as well:

\begin{theorem}
\label{theorem:main}
Suppose that $(a_{0},a_{1},a_{2},a_{3},d)$ is one of the six quintuples listed above.
Then~\mbox{$\delta(S_d)\geqslant\frac{65}{64}$}.
In particular, the surface $S_d$ admits an orbifold K\"ahler--Einstein metric.
\end{theorem}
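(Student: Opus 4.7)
The plan is to apply the valuative criterion for the $\delta$-invariant together with the localisation formula
$$
\delta(S_d)=\inf_{P\in S_d}\delta_P(S_d),\qquad \delta_P(S_d)=\inf_{E}\frac{A_{S_d}(E)}{S_{-K_{S_d}}(E)},
$$
where $E$ runs over prime divisors over $S_d$ whose centre on $S_d$ contains $P$, and
$$
S_{-K_{S_d}}(E)=\frac{1}{(-K_{S_d})^2}\int_0^{\tau(E)}\mathrm{vol}\bigl(-K_{S_d}-tE\bigr)\,dt.
$$
Following the CRZ18 strategy, for each of the six quintuples I would list all points $P\in S_d$ (including the cyclic quotient singularities, which in these cases sit on the coordinate axes and are predictable from $(a_0,a_1,a_2,a_3,d)$) and prove case by case that $\frac{A_{S_d}(E)}{S_{-K_{S_d}}(E)}\geqslant\frac{65}{64}$ for every $E$ with $P\in C_{S_d}(E)$.

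At a smooth point $P$, I would combine the known estimate $\delta(S_d)\geqslant\tfrac{3}{2}\alpha_P(S_d)$ from \cite{BJ17} with a more refined local analysis: for every curve $C\subset S_d$ passing through $P$ with small enough $(-K_{S_d})\cdot C$, I would compute the Zariski decomposition of $-K_{S_d}-tC$ directly (this is often forced by the fact that $-K_{S_d}$ is close to $C$ plus an effective exceptional part) and thereby bound $S_{-K_{S_d}}(C)$. The input curves are the natural ones: the coordinate curves $\{x=0\}$, $\{y=0\}$, and members of the base loci of pencils $|\mathcal{O}(a_i)|$ on $S_d$. The Skoda-type inequality $\delta\geqslant \tfrac{3}{2}\alpha$ together with the partial $\alpha$-invariant bounds computed in \cite{BoGaNa03,CheltsovParkShramovJGA,ChSh09c} should immediately handle the smooth locus in most of the six cases.

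The main obstacle is the analysis at the highest-index cyclic quotient singularity, say $P_t$ of type $\frac{1}{a_3}(a_0,a_2)$ (or the analogue at $P_z$), which is exactly where the $\alpha$-invariant drops below $\tfrac{2}{3}$ and why these six quintuples resisted the classical Tian criterion. There I would test not only divisors on $S_d$ but also divisorial valuations coming from weighted blowups of the orbifold chart at $P_t$; for each weighted blowup I would control $A_{S_d}(E)$ from the discrepancy formula and then compute $S_{-K_{S_d}}(E)$ via the Zariski decomposition of $-K_{S_d}-tE$, splitting the integral into at most two or three intervals according to the nef and pseudoeffective thresholds. The ratio $\tfrac{65}{64}$ suggests that the extremal valuation in one of the six cases comes very close to an equality, so a careful bookkeeping of intersection numbers on the minimal resolution (or on the weighted blowup) is what the bulk of the work will consist of, and this is where I expect the technical core of the proof to lie.
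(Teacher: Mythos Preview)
Your proposal has a structural gap. Evaluating $\frac{A_{S_d}(E)}{S_{-K_{S_d}}(E)}$ for a \emph{chosen} list of test divisors $E$ (coordinate curves and exceptional divisors of a few weighted blowups) only yields \emph{upper} bounds on $\delta(S_d)$. To prove $\delta(S_d)\geqslant\frac{65}{64}$ you must bound the ratio from below for \emph{every} prime divisor $E$ over $S_d$, and nothing in your outline explains how the infinite infimum is reduced to the finite list you describe; the Abban--Zhuang machinery that now makes such reductions routine postdates this paper and is not what \cite{CRZ18} does. The shortcut $\delta_P\geqslant\frac{3}{2}\alpha_P$ also does not close the gap: the six quintuples are on the list precisely because $\alpha(S_d)<\frac{2}{3}$, and that low value is realised along the curve $C_x=\{x=0\}$, so for smooth points $P\in C_x$ (and for the worst singular point on $C_x$) you cannot expect $\frac{3}{2}\alpha_P\geqslant\frac{65}{64}$ without a separate argument.

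The paper takes the dual route and never confronts the infimum over valuations. It uses the original Fujita--Odaka definition: to show $\delta(S_d)\geqslant\lambda$ it suffices to prove that $(S_d,\lambda D)$ is log canonical for every $k$-basis type divisor $D$ with $k\gg 0$. The $k$-basis type hypothesis is what gives traction: by \cite[Lemma~2.2]{FujitaOdaka} (Theorem~\ref{theorem:Fujita} here) one obtains an explicit upper bound on the coefficient of $C_x$ in $D$ (and, in Case~C, on the coefficient of each irreducible component of $C_x$) via the integral $\frac{1}{(-K_{S_d})^2}\int_0^\tau\mathrm{vol}(-K_{S_d}-tC)\,dt$. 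With those coefficient bounds in hand, log canonicity of $(S_d,\lambda D)$ is checked pointwise: away from $C_x\cup C_y$ by a pencil-and-multiplicity argument, on $C_x\cup C_y$ away from the worst singular point by inversion of adjunction, and at the worst singular point by a single weighted blowup followed by inversion of adjunction along the exceptional curve. So the volume integral you wanted to use does appear, but in the opposite role: it bounds the coefficient of a fixed curve in an \emph{arbitrary} basis-type divisor, rather than evaluating $\delta$ at a single chosen valuation.
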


According to the similarity of the proofs,
we handle the seven types of del Pezzo surfaces in Theorems~\ref{theorem:main-I-1} and \ref{theorem:main} into
three cases as follows:
\begin{itemize}
\item[] \textbf{Case A.} $(a_{0},a_{1},a_{2},a_{3},d)=(1,3,5,7,15)$ and the equation of $S_d$ does not contain~$yzt$;
\item[]  \hspace{43pt} $(a_{0},a_{1},a_{2},a_{3},d)=(2,3,4,5,12)$ and the equation of $S_d$ does not contain $yzt$;

\item[]  \textbf{Case B.} $(a_{0},a_{1},a_{2},a_{3},d)=(7,15,19,32,64)$;
\item[]  \hspace{43pt} $(a_{0},a_{1},a_{2},a_{3},d)=(7,19,25,41,82)$;

\item[] \textbf{Case C.} $(a_{0},a_{1},a_{2},a_{3},d)=(7,10,15,19,45)$;
\item[]  \hspace{43pt} $(a_{0},a_{1},a_{2},a_{3},d)=(7,18,27,37,81)$;
\item[]   \hspace{43pt} $(a_{0},a_{1},a_{2},a_{3},d)=(7,26,39,55,117)$.
\end{itemize}

We will handle each of these cases separately in Sections~\ref{section:case-a}, \ref{section:case-b} and \ref{section:case-c}, respectively;
see Corollaries~\ref{corollary:1}, \ref{corollary:5} and \ref{corollary:3}.
In Section~\ref{section:basic-tool}, we will present some results that will be used in the proofs of Theorems~\ref{theorem:main-I-1} and \ref{theorem:main}.

Let us briefly explain how we estimate $\delta(S_d)$ in the proofs of Theorems~\ref{theorem:main-I-1} and \ref{theorem:main}.
In our old paper \cite{CheltsovParkShramovJGA}, we developed a technique how to study possible singularities of log pairs
$(S_d,D)$, where $D$ is an effective $\mathbb{Q}$-divisor on the surface $S_d$ such that $D\sim_{\mathbb{Q}} -K_{S_d}$.
This resulted in explicit values of $\alpha(S_d)$ in all considered cases.
To estimate  $\delta(S_d)$, one has to study singularities of similar log pairs with an additional condition: the $\mathbb{Q}$-divisor $D$
has to be of \emph{$k$-basis type} for $k\gg 1$ (for the definition, see \cite[Definition~1.1]{FujitaOdaka}).
By \cite[Lemma 2.2]{FujitaOdaka} (see also Theorem~\ref{theorem:Fujita} below),
this extra condition provides strong upper bounds on multiplicities of the $\mathbb{Q}$-divisor $D$
in various curves on $S_d$.
We use these bounds (for some very particular curves in~$S_d$) together with our original methods developed in \cite{CheltsovParkShramovJGA},
to obtain the required estimates for $\delta(S_d)$.
This approach was first used in \cite{CRZ18} to estimate $\delta$-invariants of the so-called asymptotically del Pezzo surfaces.
Nevertheless, in our case we have an additional difficulty arising from the singularities of the surface $S_d$,
while all surfaces considered in \cite{CRZ18} are smooth.

It would be interesting to study the problem of existence of an orbifold K\"ahler--Einstein metric on $S_d$
in the remaining cases.
In some of these cases, the del Pezzo surface $S_d$ is indeed not K\"ahler--Einstein.
For instance, the surface $S_d$ does not admit an orbifold K\"ahler--Einstein metric in the case when $I>3a_0$.
This~follows from the obstruction found by Gauntlett, Martelli, Sparks, and Yau~\cite{GMSY}.
On the other hand, we expect the following to be true:

\begin{conjecture}
\label{conjecture:I-2}
If $I=2$ or $I=3$, then $S_d$ admits an orbifold K\"ahler--Einstein metric.
\end{conjecture}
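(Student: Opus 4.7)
The plan is to approach Conjecture~\ref{conjecture:I-2} in the same spirit as the program that culminated, for $I=1$, in Corollary~\ref{corollary:main-I-1}: first reduce to a finite list of exceptional families by an $\alpha$-invariant sweep, and then attack each exceptional family by the $\delta$-invariant/$k$-basis type technique used to prove Theorems~\ref{theorem:main-I-1} and~\ref{theorem:main}. A preliminary sanity check is in order: since $a_0\geqslant 1$, the obstruction $I>3a_0$ of Gauntlett--Martelli--Sparks--Yau~\cite{GMSY} never kicks in when $I\in\{2,3\}$, so there is no a priori Riemannian obstruction to the conclusion.

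First, I would run the algorithm of \cite{Erik} to enumerate the (infinite) list of admissible quintuples $(a_0,a_1,a_2,a_3,d)$ with $I=2$ and with $I=3$. For each such quintuple, I would attempt to push the $\alpha$-invariant bounds of~\cite{BoGaNa03,CheltsovParkShramovJGA,ChSh09c} to the criterion $\alpha(S_d)>\tfrac23$; by the inequality $\delta(S_d)\geqslant\tfrac32\alpha(S_d)$ of~\cite{BJ17}, this would already give $\delta(S_d)>1$ and hence existence of an orbifold K\"ahler--Einstein metric via \cite{LTW17,LTW19}. Experience from $I=1$ suggests that, apart from well-understood families (typically those containing the monomial $yzt$ in the defining equation, forcing a high-multiplicity tangent cone at a singular point), only finitely many \emph{types} of exceptions should survive. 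I would organise the exceptional list by the combinatorics of the link of the worst singularity, since that is what drives the $\alpha$-invariant down, mirroring the role of the $\frac{1}{7}(\ldots)$ and $\frac{1}{5}(\ldots)$ points in Cases B and C of the present paper.

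For each of the surviving exceptional families, I would carry out the $k$-basis-type analysis. Concretely: fix an effective $\mathbb{Q}$-divisor $D\sim_{\mathbb{Q}}-K_{S_d}$ of $k$-basis type, and use Theorem~\ref{theorem:Fujita} (the Fujita--Odaka multiplicity bound) to force upper bounds on $\mathrm{mult}_C D$ for each of a few distinguished curves $C\subset S_d$, such as the components of the Hessian cusp curves through the singular points or the curves $\{x=0\}$, $\{y=0\}$, etc. Combining these multiplicity bounds with the local singularity analysis developed in \cite{CheltsovParkShramovJGA} (cutting $D$ by appropriate curves and invoking inversion of adjunction on the minimal resolution), one should obtain $(S_d,\lambda D)$-log canonical for some $\lambda$ close to $1$, and in the limit $k\to\infty$ recover $\delta(S_d)\geqslant\lambda>1$. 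The structural template is exactly the one used in Sections~\ref{section:case-a}--\ref{section:case-c}; only the numerology changes.

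The hard part, as always with this machinery, will be pinpointing the right \emph{test curves} for each exceptional family, and, more seriously, producing a uniform argument across the infinitely many non-exceptional quintuples rather than treating them one by one. A genuinely new idea may be needed here: either an estimate of $\alpha(S_d)$ depending only on $I$ and on the local invariants of the singularities (so that $\alpha>\tfrac23$ holds outside a finite list), or an explicit $\delta$-invariant bound that exploits the presence of the $t$-monomial of maximal degree in the defining equation, which is typically responsible for keeping $\alpha$ large. I expect the $I=3$ case to be harder than $I=2$, because the anticanonical volume is larger and the allowable range of singularities is broader; in particular families analogous to the pathological $(1,3,5,7,15)$ example of Theorem~\ref{theorem:main-I-1}, where the monomial $yzt$ is missing and the naive $\alpha$-invariant drops below $\tfrac23$, are likely to be the principal obstacle and will demand the most delicate $k$-basis type computation.
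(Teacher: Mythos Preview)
The statement you are addressing is labelled a \emph{Conjecture} in the paper, and the paper offers no proof of it; the authors only remark, immediately afterwards, that they believe it can be attacked by the same approach used for Theorems~\ref{theorem:main-I-1} and~\ref{theorem:main}. There is therefore no ``paper's own proof'' to compare against.

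What you have written is a research outline, not a proof, and you are candid about this (``The plan is\ldots'', ``A genuinely new idea may be needed here''). The outline is well aligned with the paper's own hint: run an $\alpha$-invariant sweep over the infinite list from \cite{Erik}, isolate finitely many exceptional families, and then treat those by the $k$-basis-type/$\delta$-invariant machinery of Sections~\ref{section:case-a}--\ref{section:case-c}. But none of the substantive work has been carried out: you have not established that the $\alpha>\tfrac{2}{3}$ criterion disposes of all but finitely many quintuples for $I\in\{2,3\}$ (this reduction is itself open), you have not identified the exceptional families, and you have not performed a single volume integral, Zariski decomposition, or log-canonical verification for any specific $I=2$ or $I=3$ surface. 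As a proof, the proposal is vacuous; as a strategy, it is the expected one.

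One small slip: in your parenthetical about $yzt$, the logic is inverted. The \emph{presence} of the monomial $yzt$ is what keeps $\alpha$ high (cf.\ Theorem~\ref{theorem:Araujo} and Corollary~\ref{corollary:1-delta-easy}); it is the \emph{absence} of $yzt$ that produces the bad tangent cone at $O_t$ and drops $\alpha$ below $\tfrac{2}{3}$, as in the $(1,3,5,7,15)$ and $(2,3,4,5,12)$ cases.
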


We believe that this conjecture can be proved using a similar approach to the one we use in the proofs of  Theorems~\ref{theorem:main-I-1} and \ref{theorem:main}.

\medskip

\textbf{Acknowledgements.}
We started to work on this paper during our stay at the Erwin Schr\"odinger International Institute for Mathematics and Physics in Vienna in August 2018.
We are grateful to the institute for excellent working conditions.
Ivan Cheltsov was supported by the Royal Society grant No. 	IES\textbackslash R1\textbackslash 180205
and the Russian Academic Excellence Project~\mbox{5-100}.
Jihun Park was supported by IBS-R003-D1, Institute for Basic Science in Korea.
Constantin Shramov was supported by
the Russian Academic Excellence Project~\mbox{5-100} and Young Russian Mathematics award.

\section{Basic tools}
\label{section:basic-tool}

Let $S$ be a surface with at most cyclic quotient singularities,
let $C$ be an irreducible reduced curve on $S$,
let $P$ be a point of the curve $C$, and let $D$ be an effective $\mathbb{R}$-divisor on the surface~$S$.
In this section, we present a few of  well-known (local and global) results
that will be used in the proof of Theorem~\ref{theorem:main}.
We start with

\begin{lemma}[\cite{Ko97}]
\label{lemma:mult-1}
Suppose that $P$ is a smooth point of the surface $S$,
and the singularities of the log pair $(S,D)$ are not log canonical at $P$.
Then $\mathrm{mult}_P(D)>1$.
\end{lemma}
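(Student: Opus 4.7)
The plan is to prove the contrapositive: assuming $\mathrm{mult}_P(D) \leqslant 1$, I would show that $(S,D)$ is log canonical at $P$. The principal tool is a single blow up at $P$, iterated along an embedded resolution.

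First I would make a trivial but essential preliminary reduction. Writing $D = \sum_i a_i D_i$ in prime components, the hypothesis forces $a_i \leqslant 1$ for each $i$ with $P \in D_i$, since otherwise $\mathrm{mult}_P(D) \geqslant a_i > 1$. Then let $\pi\colon \tilde S \to S$ be the blow up at $P$, with exceptional curve $E$. Since $P$ is smooth, a standard computation yields
$$
\pi^{*}(K_S + D) \;=\; K_{\tilde S} + \tilde D + \bigl(\mathrm{mult}_P(D)-1\bigr)\,E,
$$
so the log discrepancy of $E$ with respect to $(S,D)$ equals $2-\mathrm{mult}_P(D) \geqslant 1$, and the coefficient of $E$ in the transformed boundary is $\leqslant 0$. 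Because this coefficient is non-positive, every divisorial valuation $F$ centered at a point $Q \in E$ satisfies $a(F; S, D) \geqslant a(F; \tilde S, \tilde D)$. Moreover, the componentwise inequality $\mathrm{mult}_Q(\tilde D_i) \leqslant \mathrm{mult}_P(D_i)$ gives $\mathrm{mult}_Q(\tilde D) \leqslant \mathrm{mult}_P(D) \leqslant 1$, so the hypothesis propagates.

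Iterating this reduction, one reaches in finitely many steps a stage where the total transform has simple normal crossing support at every preimage of $P$, where log canonicity is equivalent to all boundary coefficients being $\leqslant 1$ — and these bounds are furnished by the initial reduction applied at every stage. The main obstacle is termination: since $\mathrm{mult}_Q(\tilde D)$ could remain equal to $1$ indefinitely under naive iterated blow ups at maximal-multiplicity points (e.g.\ if two tangent branches with coefficients summing to $1$ keep agreeing to higher and higher order), one cannot terminate by a multiplicity-drop argument alone and must instead invoke Hironaka's embedded resolution to guarantee a finite sequence of point blow ups after which the total transform becomes snc. Alternatively, one may bypass the combinatorial induction entirely via the H\"ormander--Skoda integrability theorem: the Lelong number at $P$ of the plurisubharmonic potential $\sum a_i \log|f_i|$ equals $\mathrm{mult}_P(D) \leqslant 1$, which directly yields $\mathrm{lct}_P(S,D) \geqslant 1$, i.e.\ log canonicity of $(S,D)$ at $P$.
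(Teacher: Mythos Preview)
Your proof is correct. The paper does not actually supply a proof of this lemma: it simply attributes the result to Koll\'ar \cite{Ko97} and moves on. Your argument---blow up at $P$, observe that the exceptional divisor appears with coefficient $\mathrm{mult}_P(D)-1\leqslant 0$ in the log pullback so that discrepancies can only improve when it is dropped, check that the multiplicity bound propagates to the strict transform, and iterate along an embedded resolution---is the standard one and is carried out cleanly. The alternative one-line appeal to Skoda's integrability theorem is also valid and is in fact the quickest route. There is nothing to compare against in the paper itself.
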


This immediately implies

\begin{corollary}
\label{corollary:mult-1}
If $P$ is a smooth point of the surface $S$,
the log pair $(S,D)$ is not log canonical at $P$,
and $C$ is not contained in the support of the divisor $D$,
then $D\cdot C>1$.
\end{corollary}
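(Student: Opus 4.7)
The plan is to deduce the inequality directly from Lemma \ref{lemma:mult-1} together with a standard lower bound for a local intersection number at a smooth point. First I would invoke Lemma \ref{lemma:mult-1} to conclude that $\mathrm{mult}_P(D) > 1$, using the hypotheses that $P$ is a smooth point of $S$ and that $(S,D)$ fails to be log canonical at $P$.

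Next I would compute the global intersection number $D\cdot C$ by decomposing it into local contributions. Since $C\not\subset\mathrm{Supp}(D)$, the divisors $D$ and $C$ share no common components, so $D\cdot C$ is well defined and equals a sum of local intersection multiplicities $(D\cdot C)_Q$ over the finitely many points $Q\in D\cap C$; because $S$ has only cyclic quotient singularities and the local contributions at each point are non-negative, one obtains the lower bound $D\cdot C\geqslant (D\cdot C)_P$.

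At the smooth point $P$, the standard estimate for local intersection numbers gives
$$
(D\cdot C)_P \;\geqslant\; \mathrm{mult}_P(D)\cdot\mathrm{mult}_P(C) \;\geqslant\; \mathrm{mult}_P(D),
$$
where the last step uses $\mathrm{mult}_P(C)\geqslant 1$ since $P\in C$. Combining with the bound from Lemma \ref{lemma:mult-1} yields $D\cdot C > 1$.

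There is no real obstacle here: the only mildly delicate point is confirming that the global intersection number $D\cdot C$ on the normal surface $S$ bounds each local intersection number at a smooth point from above by the sum of non-negative local contributions. This is standard for $\mathbb{Q}$-Cartier divisors on surfaces with cyclic quotient singularities, so the argument is essentially a two-line consequence of the lemma.
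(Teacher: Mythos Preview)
Your argument is correct and is precisely the reasoning the paper has in mind: the paper states the corollary immediately after Lemma~\ref{lemma:mult-1} with the phrase ``This immediately implies'' and gives no further proof, so you have simply spelled out the standard one-line deduction via $D\cdot C\geqslant (D\cdot C)_P\geqslant \mathrm{mult}_P(D)\cdot\mathrm{mult}_P(C)>1$.
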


To state an analogue of this result in the case when $S$ is singular at $P$,
recall that $S$ has a cyclic quotient singularity of type $\frac{1}{n}(a,b)$ at the point $P$, where $a$ and $b$ are coprime positive integers that are also coprime to $n$.
Thus, if $n=1$, then $P$ is a smooth point of the surface $S$.
For~$n>1$, Corollary~\ref{corollary:mult-1} can be generalized as follows:

\begin{lemma}
\label{lemma:mult-1-n}
Suppose that the log pair $(S,D)$ is not log canonical at~$P$,
and $C$ is not contained in the support of the divisor $D$.
Then $D\cdot C>\frac{1}{n}$.
\end{lemma}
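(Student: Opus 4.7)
The plan is to reduce to the smooth case (Lemma~\ref{lemma:mult-1}) by passing to a local index-one cover at the point $P$. Let $\pi\colon (\widetilde{U},\widetilde{P}) \to (U,P)$ denote the cyclic cover of degree $n$ where $U$ is a small analytic neighborhood of $P$ in $S$, and $\widetilde{U}$ is smooth with $\boldsymbol{\mu}_n$ acting with weights $(a,b)$ so that $U = \widetilde{U}/\boldsymbol{\mu}_n$. Set $\widetilde{D} = \pi^{*}(D|_{U})$ and $\widetilde{C} = \pi^{-1}(C \cap U)_{\mathrm{red}}$. Since the map $\pi$ is \'etale in codimension one, we have $K_{\widetilde{U}} = \pi^{*}K_{U}$, and hence the log pair $(\widetilde{U},\widetilde{D})$ is not log canonical at $\widetilde{P}$.

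Next, I would apply Lemma~\ref{lemma:mult-1} at the smooth point $\widetilde{P}$ of $\widetilde{U}$ to conclude that
\[
\mathrm{mult}_{\widetilde{P}}\bigl(\widetilde{D}\bigr)>1.
\]
Since $C$ is not contained in the support of $D$, the reduced curve $\widetilde{C}$ is not contained in the support of $\widetilde{D}$, so the local intersection number at the smooth point $\widetilde{P}$ admits the standard multiplicity bound
\[
\bigl(\widetilde{D}\cdot\widetilde{C}\bigr)_{\widetilde{P}} \;\geqslant\; \mathrm{mult}_{\widetilde{P}}\bigl(\widetilde{D}\bigr)\cdot \mathrm{mult}_{\widetilde{P}}\bigl(\widetilde{C}\bigr) \;>\; 1,
\]
where we used $\mathrm{mult}_{\widetilde{P}}(\widetilde{C})\geqslant 1$ because $\widetilde{C}$ passes through $\widetilde{P}$.

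Finally, I would combine this with the projection formula for the degree $n$ cover $\pi$. The local intersection number at $P$ is related to the local intersection number at $\widetilde{P}$ by
\[
\bigl(D\cdot C\bigr)_{P} \;=\; \frac{1}{n}\bigl(\widetilde{D}\cdot\widetilde{C}\bigr)_{\widetilde{P}},
\]
and the global intersection $D\cdot C$ dominates the local contribution at $P$ because all other local intersections are non-negative. Putting these together gives $D\cdot C \geqslant (D\cdot C)_{P} > \tfrac{1}{n}$, as required.

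The only mildly delicate point is the assertion that log canonicity is preserved under the quotient map, that is, $(S,D)$ log canonical at $P$ if and only if $(\widetilde{U},\widetilde{D})$ is log canonical at $\widetilde{P}$; this is standard for finite Galois covers that are \'etale in codimension one and follows from the Riemann--Hurwitz-type equality $K_{\widetilde{U}} + \widetilde{D} = \pi^{*}(K_{U}+D)$. Everything else is a routine application of Lemma~\ref{lemma:mult-1}, so I do not anticipate a serious obstacle.
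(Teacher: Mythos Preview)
Your argument is correct and is precisely the standard index-one cover reduction that underlies the references the paper cites; the paper itself gives no argument beyond pointing to \cite[Lemmas~2.2 and~2.3]{CheltsovParkShramovJGA} and \cite{BMO}, whose proofs proceed exactly as you do. One cosmetic remark: since $\pi$ is \'etale in codimension one, in fact $\pi^{*}C$ coincides with the reduced preimage $\widetilde{C}$, so your displayed equality $(D\cdot C)_P=\tfrac{1}{n}(\widetilde{D}\cdot\widetilde{C})_{\widetilde{P}}$ is genuinely an equality and not merely an inequality, which you may want to note explicitly.
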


\begin{proof}
This follows from \cite[Lemma~2.2]{CheltsovParkShramovJGA}
and \cite[Lemma~2.3]{CheltsovParkShramovJGA}, cf. \cite{BMO}.
\end{proof}

In general, the curve $C$ may be contained in the support of the divisor $D$. Thus, we write
$$
D=aC+\Delta,
$$
where $a$ is a non-negative real number, and $\Delta$ is an effective $\mathbb{R}$-divisor on $S$ whose support does not contain the curve $C$.
Then we have the following useful result:

\begin{lemma}
\label{lemma:inversion-of-adjunction}
Suppose that $a\leqslant 1$, the surface $S$ is smooth at the point $P$, the curve $C$ is also smooth at $P$, and the log pair $(S,D)$ is not log canonical at $P$.
Then
$$
C\cdot\Delta\geqslant\big(C\cdot\Delta\big)_P>1,
$$
where $\big(C\cdot\Delta\big)_P$ is the local intersection number of $C$ and $\Delta$ at $P$.
\end{lemma}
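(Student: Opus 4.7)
The plan is to prove this via inversion of adjunction along the smooth curve $C$ at the smooth point $P$. First, I would note that the intersection number $C\cdot\Delta$ decomposes as a sum of local contributions $\sum_{Q\in C\cap\mathrm{Supp}(\Delta)}(C\cdot\Delta)_Q$, each of which is nonnegative since $C\not\subset\mathrm{Supp}(\Delta)$. So the first inequality $C\cdot\Delta\geqslant(C\cdot\Delta)_P$ is automatic, and the entire content is the strict lower bound $(C\cdot\Delta)_P>1$.

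Next I would reduce to the case where $C$ appears with coefficient exactly $1$. Since $a\leqslant 1$, the divisor $C+\Delta$ satisfies $C+\Delta\geqslant aC+\Delta=D$ coefficient-wise, so the log pair $(S,C+\Delta)$ has log discrepancies at worst equal to those of $(S,D)$, hence it is also not log canonical at $P$. Because $S$ is smooth at $P$ and $C$ is smooth at $P$, the pair $(S,C)$ is plt (in fact log smooth) near $P$, so one may apply inversion of adjunction: the log pair $\bigl(C,\Delta|_C\bigr)$ fails to be log canonical at $P$.

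Now $C$ is a smooth curve near $P$ and $\Delta|_C$ is a well-defined effective $\mathbb{R}$-divisor on $C$ (using $C\not\subset\mathrm{Supp}(\Delta)$). For a pair consisting of a smooth curve and an effective divisor, failure of log canonicity at a point $P$ is equivalent to the coefficient of $P$ being strictly greater than $1$. The coefficient of $P$ in $\Delta|_C$ is precisely the local intersection multiplicity $(C\cdot\Delta)_P$, so $(C\cdot\Delta)_P>1$ as desired.

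The main subtlety is the invocation of inversion of adjunction in the surface case. This is standard when both $S$ and $C$ are smooth at $P$, but one should explicitly verify it, either by citing Shokurov's connectedness principle in dimension two or by a direct local computation: blow up $P$, compare the strict transform of $C+\Delta$ with the exceptional divisor, and use $\mathrm{mult}_P(\Delta)\geqslant(C\cdot\Delta)_P$ together with the fact that if the log canonical threshold of $(S,C+\Delta)$ at $P$ is less than $1$, the same must hold for the restriction to $C$. No other step poses real difficulty; the remainder is a bookkeeping exercise about comparing coefficients and using the assumption $a\leqslant 1$.
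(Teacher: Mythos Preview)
Your proposal is correct and follows essentially the same approach as the paper: both invoke inversion of adjunction along the smooth curve $C$ at the smooth point $P$. The paper's proof is in fact just a one-line citation to the general inversion of adjunction theorem (Shokurov, Prokhorov), whereas you have spelled out the standard unpacking of that citation---reducing to coefficient $1$ via $a\leqslant 1$, restricting to $C$, and reading off the coefficient of $P$ in $\Delta|_C$ as $(C\cdot\Delta)_P$.
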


\begin{proof}
This is a special case of a much more general result, known as the inversion of adjunction (see \cite{Shokurov,Prokhorov}).
\end{proof}

The inversion of adjunction also holds for singular varieties.
In our two-dimensional case, it can be stated as follows:

\begin{lemma}
\label{lemma:inversion-of-adjunction-orbifold}
Suppose that $a\leqslant 1$, the log pair $(S,C)$ is purely log terminal at $P$, and the log pair $(S,D)$ is not log canonical at $P$.
Then
$$
C\cdot\Delta>\frac{1}{n}.
$$
\end{lemma}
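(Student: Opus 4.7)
The plan is to reduce Lemma~\ref{lemma:inversion-of-adjunction-orbifold} to Shokurov's inversion of adjunction for the surface pair $(S, C + \Delta)$, after normalizing the coefficient in front of $C$, and then read off the needed estimate from the coefficient of the different at the quotient singularity $P$.

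First I would use the hypothesis $a \leqslant 1$ to replace the log pair under consideration by a larger one: since
$$
aC + \Delta \leqslant C + \Delta
$$
as effective divisors, non--log--canonicity of $(S, aC + \Delta)$ at $P$ implies non--log--canonicity of $(S, C + \Delta)$ at $P$. Next, because $(S,C)$ is plt at $P$, the curve $C$ is normal at $P$ (hence analytically smooth, as it is a curve), which means that the different $\mathrm{Diff}_C(\Delta)$ is defined on $C$ at $P$ in the standard sense. By Shokurov's inversion of adjunction (see \cite{Shokurov,Prokhorov}), the failure of $(S, C + \Delta)$ to be log canonical at $P$ is equivalent to the failure of $(C, \mathrm{Diff}_C(\Delta))$ to be log canonical at $P$. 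Since $C$ is smooth, the latter simply means that the coefficient of $P$ in $\mathrm{Diff}_C(\Delta)$ is strictly greater than $1$.

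Now I would compute this coefficient. Because $\Delta$ does not contain $C$ in its support, one has
$$
\mathrm{Diff}_C(\Delta) = \mathrm{Diff}_C(0) + \Delta\big|_C,
$$
where $\Delta|_C$ records the local intersection contributions $(\Delta \cdot C)_Q$ at each point $Q$ of $C$. The standard computation of the different at a cyclic quotient singularity of type $\frac{1}{n}(a,b)$, with $C$ passing through $P$ in a plt way, gives that the coefficient of $P$ in $\mathrm{Diff}_C(0)$ equals $\frac{n-1}{n}$. Consequently, the coefficient of $P$ in $\mathrm{Diff}_C(\Delta)$ is exactly
$$
\frac{n-1}{n} + \big(\Delta \cdot C\big)_P,
$$
and this being strictly greater than $1$ yields $\big(\Delta \cdot C\big)_P > \frac{1}{n}$. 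Since $\Delta$ is effective and does not contain $C$, the global intersection number satisfies $C \cdot \Delta \geqslant (\Delta \cdot C)_P > \frac{1}{n}$, which is the desired inequality.

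The main obstacle is justifying the local computation of $\mathrm{Diff}_C(0)$ at the quotient point $P$, including the fact that $(S,C)$ plt at $P$ forces $C$ to be one of the two ``eigendirections'' of the quotient so that the different coefficient is precisely $\frac{n-1}{n}$ (and not something smaller). Once this structural fact is in hand, together with the correct interpretation of the local intersection $(\Delta \cdot C)_P$ as the contribution of $\Delta$ to the different, the rest of the argument is a direct application of inversion of adjunction; this mirrors the treatment referenced in \cite[Lemma~2.2]{CheltsovParkShramovJGA} and \cite[Lemma~2.3]{CheltsovParkShramovJGA} used in the proof of Lemma~\ref{lemma:mult-1-n}.
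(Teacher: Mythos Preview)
Your argument is correct and is precisely the approach the paper has in mind: the paper's proof simply cites inversion of adjunction \cite{Shokurov,Prokhorov} and the proof of \cite[Lemma~2.5]{CheltsovParkShramovJGA}, and you have spelled out exactly that computation, including the key fact (recorded in Remark~\ref{remark:plt}) that the different of $C$ at the cyclic quotient point $P$ contributes $\frac{n-1}{n}$.
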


\begin{proof}
The required inequality follows from a more general version of the inverse of adjunction (see \cite{Shokurov,Prokhorov}).
See also the proof of \cite[Lemma~2.5]{CheltsovParkShramovJGA}.
\end{proof}

By our assumption, the surface $S$ has a cyclic quotient singularity of type $\frac{1}{n}(a,b)$ at the point~$P$.
Thus, locally near $P$, the surface $S$ is a quotient of $\mathbb{C}^2$ by the group $\mathbb{Z}_n$ that acts on~$\mathbb{C}^2$ as
$$
\big(x,y\big)\mapsto\big(\omega^ax,\omega^by\big),
$$
where $\omega$ is a primitive $n$th root of unity.
We can consider $x$ and $y$ as weighted coordinates around the point $P$.

\begin{remark}
\label{remark:plt}
The pair $(S,C)$ has purely log terminal singularity at $P$ if and only if $C$ is given by $x=0$ or $y=0$ for an appropriate choice of weighted coordinates $x$ and $y$.
This follows from \cite[Theorem~2.1.2]{Prokhorov}, see also \cite[\S~9.6]{Ko97}.
Geometrically, this means that $C$ is smooth at~$P$, and its proper transform on the minimal resolution of singularities
of the singular point~$P$ intersects the tail curve in the chain of exceptional curves.
If $(S,C)$ has purely log terminal singularities, then
$$
\big(K_S+C\big)\cdot C=-2+\sum_{O\in C}\frac{n_O-1}{n_O},
$$
where we assume that $S$ has a cyclic quotient singularity of index $n_O$ at the point $O$.
\end{remark}

Let $f\colon\widetilde{S}\to S$ be the weighted blow up of the point $P$ with $\mathrm{wt}(x)=a$ and $\mathrm{wt}(y)=b$,
and let~$E$ be the exceptional curve of the morphism $f$.
Then $\widetilde{S}$ has at most cyclic quotient singularities, one has~\mbox{$E\cong\mathbb{P}^1$}, and the log pair $(\widetilde{S},E)$  has purely log terminal singularities.
Moreover, the curve $E$ has at most two singular points of the surface $\widetilde{S}$.
One of then is a singular point of type $\frac{1}{a}(n,-b)$,
and another is a singular point of type $\frac{1}{b}(-a,n)$.
Furthermore, we have
$$
K_{\widetilde{S}}\sim_{\mathbb{Q}}f^*\big(K_S\big)+\frac{a+b-n}{n}E.
$$
If the curve $C$ is locally given by $x=0$ near the point $P$, then
$$
\widetilde{C}\sim_{\mathbb{Q}}f^*\big(C\big)-\frac{a}{n}E,
$$
where $\widetilde{C}$ is the proper transform of the curve $C$ on the surface $\widetilde{S}$.
For more properties of weighted blow ups and their defining equations, see \cite[Section~3]{Prokhorov} or \cite{BMO}.

Denote by $\widetilde{D}$ the proper transform of the divisor $D$ via $f$. Then
$$
\widetilde{D}\sim_{\mathbb{R}}f^*\big(D\big)-mE
$$
for some  non-negative rational number $m$. If $C$ is not contained in the support of the divisor~$D$, we can estimate $m$ using
$$
0\leqslant\widetilde{D}\cdot\widetilde{C}=\big(f^*\big(D\big)-mE\big)\cdot\widetilde{C}=D\cdot C-mE\cdot \widetilde{C},
$$
where $D\cdot C$ and $E\cdot \widetilde{C}$ can be computed in every case. Note that
$$
K_{\widetilde{S}}+\widetilde{D}+\Big(m-\frac{a+b-n}{n}\Big)E\sim_{\mathbb{R}}f^*\big(K_S+D\big).
$$
This implies

\begin{proposition}
\label{proposition:lc-invariant-under-blowup}
The log pair $(S,D)$ is log canonical at $P$ if and only if the log pair
$$
\Bigg(\widetilde{S},\widetilde{D}+\Big(m-\frac{a+b-n}{n}\Big)E\Bigg)
$$
is log canonical along the curve $E$.
\end{proposition}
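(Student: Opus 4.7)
The plan is to deduce the proposition directly from the crepant identity
$$
K_{\widetilde{S}}+\widetilde{D}+\Big(m-\tfrac{a+b-n}{n}\Big)E\sim_{\mathbb{R}}f^*\big(K_S+D\big)
$$
that has just been established, together with the standard fact that log canonicity is preserved under crepant pullback.

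First I would recall the working definition of log canonicity most convenient here: a log pair $(X,B)$ with $K_X+B$ $\mathbb{R}$-Cartier is log canonical at a point $Q\in X$ if and only if, for every proper birational morphism $g\colon Y\to X$ from a normal variety $Y$ and every prime divisor $F$ on $Y$ with $g(F)\ni Q$, the log discrepancy $a(F;X,B)$ is at least $-1$. Since $f\colon\widetilde{S}\to S$ is an isomorphism away from $P$ and $f^{-1}(P)=E$, pulling divisorial valuations back through $f$ gives a bijection between prime divisors on birational models of $S$ whose centre on $S$ equals $P$ and prime divisors on birational models of $\widetilde{S}$ whose centre on $\widetilde{S}$ lies in $E$; under this bijection $E$ itself is exceptional over $P$.

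Next I would apply the crepant identity: for any proper birational $g\colon Y\to \widetilde{S}$ and any prime $F$ on $Y$, composition with $f$ gives a proper birational morphism $f\circ g\colon Y\to S$, and pulling back the identity yields
$$
a\Big(F;S,D\Big)=a\Big(F;\widetilde{S},\widetilde{D}+\big(m-\tfrac{a+b-n}{n}\big)E\Big).
$$
Combined with the bijection above, the multiset of log discrepancies of $(S,D)$ over $P$ is the same as the multiset of log discrepancies of the modified pair over points of $E$. Hence one of the two pairs has all such log discrepancies at least $-1$ iff the other does, which is exactly the claimed equivalence.

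The only subtlety is that $\widetilde{S}$ has two cyclic quotient singularities of types $\tfrac{1}{a}(n,-b)$ and $\tfrac{1}{b}(-a,n)$ sitting on $E$, so log canonicity of the modified pair "along $E$" must be understood in the orbifold sense, checking divisorial valuations over those quotient points as well. This is automatic from the definition of log canonicity via proper birational models, and is the reason the statement is phrased with "along $E$" rather than at the generic point of $E$ alone; I do not anticipate any real obstruction beyond being careful with this formulation.
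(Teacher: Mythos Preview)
Your proposal is correct and follows the same approach as the paper: the paper simply writes the crepant identity $K_{\widetilde{S}}+\widetilde{D}+(m-\tfrac{a+b-n}{n})E\sim_{\mathbb{R}}f^*(K_S+D)$ and then states ``This implies'' the proposition, treating it as an immediate consequence of the invariance of log discrepancies under crepant pullback. Your write-up is just an explicit unpacking of that standard fact.
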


So far, we considered only local properties of the divisor $D$ on the surface $S$.
These properties will be used later to prove Theorem~\ref{theorem:main}.
However, the nature of this theorem is global, so that we will need one global result that is due to Fujita and Odaka.
To state it, we remind the reader of  what the volume $\mathrm{vol}(D)$ of the $\mathbb{R}$-divisor $D$ is.
If $D$ is a Cartier divisor, then its volume is simply the number
$$
\mathrm{vol}(D)=\limsup_{k\in\mathbb{N}}\frac{h^0(\mathcal{O}_S(kD)}{k^2/2!},
$$
where the $\limsup$ can be replaced by  limit (see \cite[Example~11.4.7]{Laz-positivity-in-AG}).
Likewise, if $D$ is a $\mathbb{Q}$-divisor, we can define its volume using the identity
$$
\mathrm{vol}(D)=\frac{\mathrm{vol}\big(\lambda D\big)}{\lambda^2}
$$
for an appropriate positive rational number $\lambda$.
One can show that the volume $\mathrm{vol}(D)$ only depends on the numerical equivalence class of the divisor $D$.
Moreover, the volume function can be continuously extended to $\mathbb{R}$-divisors (see \cite{Laz-positivity-in-AG} for details).

If $D$ is not pseudoeffective, then $\mathrm{vol}(D)=0$.
If $D$ is pseudoeffective, its volume can be computed using its Zariski decomposition \cite{Prokhorov03,BauerKuronyaSzemberg}.
Namely, if $D$ is pseudoeffective, then there exists a nef $\mathbb{R}$-divisor $N$ on the surface $S$ such that
$$
D\sim_{\mathbb{R}} N+\sum_{i=1}^ra_iC_i,
$$
where each $C_i$ is an irreducible curve on $S$ with $N\cdot C_i=0$,
each $a_i$ is a non-negative real number, and the intersection form of the curves $C_1,\ldots,C_r$ is negative definite.
Such decomposition is unique, and it follows from \cite[Corollary~3.2]{BauerKuronyaSzemberg} that
\begin{equation}
\label{equation:volume}
\mathrm{vol}\big(D\big)=\mathrm{vol}\big(N\big)=N^2.
\end{equation}

Recall that $D=aC+\Delta$,
where $a$ is a non-negative real number, and $\Delta$ is an effective divisor whose support does not contain the curve $C$.
Let
$$
\tau=\sup\Big\{x\in\mathbb{R}_{>0}\ \Big|\ D-xC\ \text{is pseudoeffective}\Big\}.
$$
Then $a\leqslant\tau$.
However, to prove Theorem~\ref{theorem:main}, we have to find a better bound for $a$
under an additional assumption that $D$ is an ample  $\mathbb{Q}$-divisor  of $k$-basis type for $k\gg 1$
(for the definition, see \cite[Definition~1.1]{FujitaOdaka} and the proof of Theorem~\ref{theorem:Fujita} below).
One such estimate is given by the following very special case of \cite[Lemma 2.2]{FujitaOdaka}.

\begin{theorem}
\label{theorem:Fujita}
Suppose that $D$ is a big  $\mathbb{Q}$-divisor of $k$-basis type for $k\gg 1$.
Then
$$
a\leqslant\frac{1}{D^2}\int_0^{\tau}\mathrm{vol}\big(D-xC\big)dx+\epsilon_k,
$$
where $\epsilon_k$ is a small constant depending on $k$ such that $\epsilon_k\to 0$ as $k\to \infty$.
\end{theorem}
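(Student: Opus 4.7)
The plan is to follow the standard Fujita--Odaka argument, which reduces the estimate on $\mathrm{mult}_C(D)$ to a filtration computation on $H^0(S,kL)$, where $L$ denotes the underlying class of $D$, chosen so that $kL$ is Cartier and $N_k:=h^0(S,\mathcal{O}_S(kL))$ is finite. The key observation is that a $k$-basis type divisor is by definition
$$
D=\frac{1}{kN_k}\sum_{i=1}^{N_k}\mathrm{div}(s_i),
$$
where $s_1,\ldots,s_{N_k}$ is a basis of $H^0(S,\mathcal{O}_S(kL))$, so $a=\mathrm{mult}_C(D)=\frac{1}{kN_k}\sum_i\mathrm{ord}_C(s_i)$.

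First I would introduce the decreasing filtration $F^j:=H^0(S,\mathcal{O}_S(kL-jC))\subseteq H^0(S,\mathcal{O}_S(kL))$ and set $f_j:=\dim F^j$, noting that $f_0=N_k$ and $f_j=0$ for $j>k\tau$. Since $s_i\in F^j$ if and only if $\mathrm{ord}_C(s_i)\geqslant j$, and since the $s_i$ are linearly independent, for every basis one has $\#\{i:\mathrm{ord}_C(s_i)\geqslant j\}\leqslant f_j$, hence
$$
\sum_{i}\mathrm{ord}_C(s_i)=\sum_{j\geqslant 1}\#\{i:\mathrm{ord}_C(s_i)\geqslant j\}\leqslant \sum_{j\geqslant 1}f_j,
$$
uniformly over all bases (with equality when the basis is compatible with the filtration). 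Dividing by $kN_k$ gives the clean bound $a\leqslant \frac{1}{kN_k}\sum_{j\geqslant 1}h^0(S,kL-jC)$, valid for \emph{any} $k$-basis type representative.

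Second, I would convert the finite sum to the integral. By the asymptotic Riemann--Roch formula for big $\mathbb{Q}$-divisors on the normal projective surface $S$,
$$
h^0(S,kM)=\tfrac{k^2}{2}\mathrm{vol}(M)+O(k)
$$
uniformly for $M$ in a compact subset of the big cone. Applying this to $M=L-\tfrac{j}{k}C$ for $j=1,\ldots,\lfloor k\tau\rfloor$, using $N_k=\tfrac{k^2}{2}L^2+O(k)$ and continuity of $\mathrm{vol}$ at the pseudoeffective boundary $x=\tau$, one obtains
$$
a\leqslant\frac{1}{kN_k}\sum_{j=1}^{\lfloor k\tau\rfloor}h^0(S,kL-jC)=\frac{1}{L^2}\cdot\frac{1}{k}\sum_{j=1}^{\lfloor k\tau\rfloor}\mathrm{vol}\Bigl(L-\tfrac{j}{k}C\Bigr)+\epsilon_k,
$$
and the right-hand side is a Riemann sum of step $1/k$ for the continuous function $x\mapsto\mathrm{vol}(L-xC)$ on $[0,\tau]$, which converges to $\frac{1}{D^2}\int_0^\tau \mathrm{vol}(D-xC)\,dx$ as $k\to\infty$ since $D^2=L^2$.

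The main obstacle is the \emph{uniform} asymptotic Riemann--Roch estimate on $S$: because $S$ has cyclic quotient singularities, one cannot directly quote the smooth Riemann--Roch theorem, and one needs uniform control of the remainder as $M$ varies along the ray $\{L-xC:x\in[0,\tau]\}$, especially near the boundary where $\mathrm{vol}(L-xC)$ may degenerate. This is handled via Fujita's continuity theorem for volume, Zariski decomposition on surfaces (cf.\ \cite{Prokhorov03,BauerKuronyaSzemberg}), and Kodaira's lemma for big $\mathbb{Q}$-divisors; once this is in place, the Riemann--Roch remainder, the discretisation error in the Riemann sum, and the gap between $k^2/(2N_k)$ and $1/L^2$ can all be absorbed into a single constant $\epsilon_k\to 0$, giving exactly the form stated. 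This is precisely the content of \cite[Lemma~2.2]{FujitaOdaka}, specialised to a surface and to the ray through a single curve $C$.
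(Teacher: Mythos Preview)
Your proposal is correct and follows essentially the same approach as the paper's sketch: both set up the filtration $H^0(S,\mathcal{O}_S(kD-jC))$, bound $\sum_i\mathrm{ord}_C(s_i)$ by $\sum_{j\geqslant 1}h^0(S,\mathcal{O}_S(kD-jC))$ (you via the layer-cake identity, the paper via a telescoping Abel-summation count, which are equivalent), and then recognize $\frac{1}{kN_k}\sum_j h^0(kD-jC)$ as a Riemann sum converging to $\frac{1}{D^2}\int_0^\tau\mathrm{vol}(D-xC)\,dx$. Your discussion of the uniform asymptotic Riemann--Roch remainder is more explicit than the paper's, which simply asserts the convergence and refers the reader to \cite{FujitaOdaka} for details.
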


\begin{proof}
Let us give a sketch of the proof that shows the nature of the required bound.
First, recall from \cite{FujitaOdaka} that being $k$-basis type simply means that
$$
D=\frac{1}{kd_k}\sum_{i=1}^{d_k}\big\{s_i=0\big\},
$$
where $d_k=h^0(S, \mathcal{O}_{S}(kD))$ and $s_1,\ldots,s_{d_k}$ are linearly independent sections in $H^0(S, \mathcal{O}_{S}(kD))$.
Here, we assume that $kD$ is a Cartier divisor and $k\gg 0$.

Let $M$ be a positive rational number such that $M\geqslant\tau$.
We may assume that $kM$ is an integer.
Then there is a filtration of vector spaces
\begin{multline*}
0=H^0\Big(S, \mathcal{O}_{S}(kD-(kM+1)C)\Big)\subseteq H^0\Big(S, \mathcal{O}_{S}(kD-kMC)\Big)\subseteq\\
\subseteq H^0\Big(S, \mathcal{O}_{S}(kD-(kM-1)C)\Big)\subseteq \ldots \subseteq H^0\Big(S, \mathcal{O}_{S}(kD-3C)\Big)\subseteq\\
\subseteq H^0\Big(S, \mathcal{O}_{S}(kD-2C)\Big)\subseteq H^0\Big(S, \mathcal{O}_{S}(kD-C)\Big)\subseteq H^0\Big(S, \mathcal{O}_{S}(kD)\Big).
\end{multline*}
Let $r_i=h^0(S, \mathcal{O}_{S}(kD-iC))$. Then
$$
0=r_{kM+1}\leqslant r_{kM}\leqslant r_{kM-1}\leqslant \ldots\leqslant r_3\leqslant r_2\leqslant r_1\leqslant r_0=d_k.
$$
Since the sections $s_1,\ldots,s_{d_k}$ are linearly independent,
we see that at most $r_1$ of them are contained in
$$
H^0\Big(S, \mathcal{O}_{S}(kD-C\Big).
$$
Among them at most $r_2$ are contained in $H^0(S, \mathcal{O}_{S}(kD-2C))$.
Among them at most $r_3$ are contained in $H^0(S, \mathcal{O}_{S}(kD-3C))$ etc.
Finally, at most $r_{kM}$ sections among $s_1,\ldots,s_{d_k}$ are contained in
$$
H^0\Big(S, \mathcal{O}_{S}(kD-kMC\Big),
$$
and there are no sections in $H^0(\mathcal{O}_{S}(kD-(kM+1)C)=0$. Then
\begin{itemize}
\item at most $r_1$ sections among $s_1,\ldots,s_{d_k}$ vanish at $C$;
\item at most $r_2$ sections among $s_1,\ldots,s_{d_k}$ vanish at $C$ with order $\geqslant 2$;
\item at most $r_3$ sections among $s_1,\ldots,s_{d_k}$ vanish at $C$ with order $\geqslant 3$;
\item $\ldots$
\item at most $r_{kM-1}$ sections among $s_1,\ldots,s_{d_k}$ vanish at $C$ with order $\geqslant kM-1$;
\item at most $r_{kM}$ sections among $s_1,\ldots,s_{d_k}$ vanish at $C$ with order $kM$;
\item no sections among $s_1,\ldots,s_{d_k}$ vanish at $C$ with order $\geqslant kM+1$.
\end{itemize}
This immediately implies that the the order of vanishing of the product  $s_1\cdot s_2\cdot s_3\cdot\ldots\cdot s_{d_n}$ at the curve $C$ is at most
\begin{multline*}
kMr_{kM}+(kM-1)\big(r_{kM-1}-r_{kM}\big)+(kM-2)\big(r_{kM-2}-r_{kM-1}\big)+\ldots\\
\ldots+4\big(r_{4}-r_{5}\big)+3\big(r_{3}-r_{4}\big)+2\big(r_{2}-r_{3}\big)+\big(r_{1}-r_{2}\big)=\sum_{i=1}^{kM}r_i.
\end{multline*}
Then we have
$$
a\leqslant\frac{r_1+r_2+\ldots+r_{kM-1}+r_{kM}}{kr_0}.
$$
As $k\to\infty$, the right hand side in this inequality converges to
$$
\frac{1}{D^2}\int_0^{\tau}\mathrm{vol}\big(D-xC\big)dx,
$$
which gives the upper bound on $a$.
For a detailed proof, we refer the reader to \cite{FujitaOdaka}.
\end{proof}

\begin{corollary}
\label{corollary:a-simple}
Suppose that $D$ is a big  $\mathbb{Q}$-divisor of $k$-basis type for $k\gg 1$,
and
$$
C\sim_{\mathbb{Q}} \mu D
$$
for some positive rational number $\mu$. Then
$$
a\leqslant\frac{1}{3\mu}+\epsilon_k,
$$
where $\epsilon_k$ is a small constant depending on $k$ such that $\epsilon_k\to 0$ as $k\to \infty$.
\end{corollary}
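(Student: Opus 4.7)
The plan is to apply Theorem~\ref{theorem:Fujita} directly, specialized to the situation $C\sim_{\mathbb{Q}}\mu D$. The key point is that the proportionality $C\sim_{\mathbb{Q}}\mu D$ reduces the volume integral to a completely explicit one-variable calculation, so what I expect to do is first identify the pseudoeffective threshold $\tau$ for $D-xC$, then evaluate the volume function along that ray, and finally integrate.

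First I would observe that $D-xC\sim_{\mathbb{Q}}(1-x\mu)D$ for every $x\geqslant 0$. Since $D$ is big, and hence pseudoeffective, the divisor $D-xC$ remains pseudoeffective precisely for $x\leqslant\frac{1}{\mu}$, which gives $\tau=\frac{1}{\mu}$. On this interval, the scaling identity $\mathrm{vol}(\lambda D)=\lambda^2\mathrm{vol}(D)$ for $\lambda\geqslant 0$ yields
\[
\mathrm{vol}\bigl(D-xC\bigr)=\mathrm{vol}\bigl((1-x\mu)D\bigr)=(1-x\mu)^2\,\mathrm{vol}(D).
\]
In the intended applications $D$ is $\mathbb{Q}$-linearly equivalent to the ample anticanonical class $-K_{S_d}$, so $\mathrm{vol}(D)=D^2$ and one can replace $\mathrm{vol}(D)$ by $D^2$ throughout.

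The remaining step is elementary: the substitution $u=1-x\mu$ gives
\[
\int_0^{1/\mu}(1-x\mu)^2\,dx=\frac{1}{\mu}\int_0^1 u^2\,du=\frac{1}{3\mu},
\]
and plugging this into the bound of Theorem~\ref{theorem:Fujita} produces $a\leqslant\frac{1}{D^2}\cdot D^2\cdot\frac{1}{3\mu}+\epsilon_k=\frac{1}{3\mu}+\epsilon_k$. I do not anticipate any serious obstacle: the corollary is essentially a bookkeeping consequence of the Fujita--Odaka estimate. The only minor point worth highlighting is the identification $\mathrm{vol}(D)=D^2$, which is automatic whenever $D$ is nef, and in particular whenever $D$ is proportional to an ample divisor, as it will be for every $k$-basis type divisor appearing in the proofs of Theorems~\ref{theorem:main-I-1} and~\ref{theorem:main}.
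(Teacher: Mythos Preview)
Your proposal is correct and follows essentially the same approach as the paper: apply Theorem~\ref{theorem:Fujita}, use $D-xC\sim_{\mathbb{Q}}(1-x\mu)D$ to reduce the volume integral to $D^2\int_0^{1/\mu}(1-x\mu)^2\,dx=\frac{D^2}{3\mu}$, and conclude. Your explicit remark about the identification $\mathrm{vol}(D)=D^2$ requiring nefness is a fair clarifying point that the paper leaves implicit (it already writes $D^2$ in the statement of Theorem~\ref{theorem:Fujita}), but the underlying argument is identical.
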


\begin{proof}
Using Theorem~\ref{theorem:Fujita}, we get
$$
a\leqslant\frac{1}{D^2}\int_0^{\infty}\mathrm{vol}\big(D-\lambda C\big)d\lambda+\epsilon_k,
$$
where $\epsilon_k$ is a small constant depending on $k$ such that $\epsilon_k\to 0$ as $k\to \infty$.
But
$$
\int_0^{\infty}\mathrm{vol}\big(D-\lambda C\big)d\lambda=
\int_0^{\infty}\mathrm{vol}\big((1-\lambda\mu)D\big)d\lambda=
D^2\int_0^{\frac{1}{\mu}}(1-\lambda\mu)^2d\lambda=\frac{D^2}{3\mu}.
$$
This implies the assertion.
\end{proof}

\section{Case A}
\label{section:case-a}
In this section, we consider two types of quasismooth hypersurfaces as follows:
\begin{itemize}
\item $S_{15}$ : a quasismooth hypersurface in $\mathbb{P}(1,3,5,7)$ of degree $15$;
\item $S_{12}$ : a quasismooth hypersurface in $\mathbb{P}(2,3,4,5)$ of degree $12$.
\end{itemize}

 By suitable coordinate changes,
$S_{15}$ may be assumed to be  given by
\begin{multline*}
z^{3}+y^{5}+xt^{2}+b_1yzt+b_2xy^{3}z+b_3x^{2}yz^{2}+b_4x^{2}y^{2}t+\\
+b_5x^{3}zt+b_6x^{3}y^{4}+b_{7}x^{4}y^{2}z+b_{8}x^{5}z^{2}+b_{9}x^{5}yt+b_{10}x^{6}y^{3}+\\
+b_{11}x^{7}yz+b_{12}x^{8}t+b_{13}x^{9}y^{2}+b_{14}x^{10}z+b_{15}x^{12}y+b_{16}x^{15}=0
\end{multline*}
and $S_{12}$ by
$$
z(z-x^2)(z-\epsilon x^2)+y^4 +xt^2+b_1yzt+b_2xy^2z+b_3x^2yt+b_4x^3y^2=0,
$$
where $\epsilon$ ($\epsilon\ne 0$ and $\epsilon\ne 1$), $b_1$,
$b_2$, $b_3$, $b_4$, $b_5$, $b_6$, $b_7$, $b_8$, $b_9$, $b_{10}$,
$b_{11}$, $b_{12}$, $b_{13}$, $b_{14}$, $b_{15}$ and $b_{16}$  are constants. Note that the surface $S_{15}$ has the only singular point at $O_t=[0:0:0:1]$. Meanwhile, $S_{12}$ has exactly four singular points at $O_x=[1:0:0:0]$, $O_t=[0:0:0:1]$, $Q_1=[1:0:1:0]$ and $Q_2=[1:0:\epsilon:0]$.

In the sequel, we use $S$ for the surfaces $S_{15}$ and $S_{12}$ if properties or conditions are satisfies by both the surfaces.

Denote by $C_x$ the curve in $S$  cut out by the equation $x=0$.
Then the curve $C_{x}$ is reduced and irreducible in both the cases. It is easy to check
$$
\mathrm{lct}\big(S_{15}, C_{x}\big)=\left\{%
\aligned
&1\ \text{ if $a_1\ne 0$},\\%
&\frac{8}{15}\ \text{ if $a_1=0$},\\%
\endaligned\right.%
$$
$$
\mathrm{lct}\big(S_{12}, C_x\big)=\left\{%
\aligned
&1\ \text{ if $a_1\ne0$},\\%
&\frac{7}{12}\ \text{ if $a_1=0$},\\%
\endaligned\right.%
$$
where $\mathrm{lct}\big(S, C_{x}\big)$ is the log canonical threshold of $C_x$ on $S$.
Moreover, one has $\alpha(S)=\mathrm{lct}(S,C_{x})$ by \cite[Theorem~1.10]{CheltsovParkShramovJGA}.
Thus, since $\delta(S)\geqslant\frac{3}{2}\alpha(S)$, we obtain

\begin{corollary}
\label{corollary:1-delta-easy}
If $b_1\ne 0$, then $\delta(S)\geqslant\frac{3}{2}$.
\end{corollary}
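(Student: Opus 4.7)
The statement is essentially an immediate consequence of the three ingredients already assembled in the paragraph preceding it, so my plan is just to chain them together and note where the hypothesis $b_1\ne 0$ enters.

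First I would read off the log canonical threshold computation already stated in the excerpt: when $b_1\ne 0$, the curve $C_x$ cut out by $x=0$ is reduced and irreducible on $S$ (either $S_{15}$ or $S_{12}$), and one has $\mathrm{lct}(S,C_x)=1$. The hypothesis $b_1\ne 0$ is precisely what guarantees that the term $yzt$ appears in the defining equation of $S$, which is what forces the log canonical threshold of $C_x$ to jump from $\tfrac{8}{15}$ (resp.\ $\tfrac{7}{12}$) up to $1$, by the case split already displayed.

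Next I would invoke the identity $\alpha(S)=\mathrm{lct}(S,C_x)$, which is cited from \cite[Theorem~1.10]{CheltsovParkShramovJGA} immediately before the corollary. Combined with the previous step, this yields $\alpha(S)=1$ under the hypothesis $b_1\ne 0$.

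Finally, I would apply the Blum--Jonsson comparison $\delta(S)\geqslant\tfrac{3}{2}\alpha(S)$ from \cite[Theorem~A]{BJ17}, which the authors have already recalled in the introduction and just restated in the paragraph above the corollary. This gives $\delta(S)\geqslant\tfrac{3}{2}$, as claimed. There is no real obstacle: the corollary is a formal consequence of three results that are already in hand, and the only thing the hypothesis $b_1\ne 0$ buys us is the sharp value $\mathrm{lct}(S,C_x)=1$ rather than the smaller alternative in the case split.
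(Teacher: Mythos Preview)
Your proposal is correct and matches the paper's own reasoning exactly: the corollary is deduced immediately from $\mathrm{lct}(S,C_x)=1$ when $b_1\ne 0$, the identification $\alpha(S)=\mathrm{lct}(S,C_x)$ from \cite[Theorem~1.10]{CheltsovParkShramovJGA}, and the inequality $\delta(S)\geqslant\tfrac{3}{2}\alpha(S)$ from \cite{BJ17}.
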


From now on, we suppose that $b_1=0$.
\begin{proposition}
\label{proposition:1}
 Let $D$ be an effective $\mathbb{Q}$-divisor on $S$ such that
$$
D\sim_{\mathbb{Q}} -K_{S}.
$$
Write $D=aC_x+\Delta$, where $a$ is a non-negative number, and $\Delta$ is an effective $\mathbb{Q}$-divisor on the surface $S$ whose support does not contain the curve $C_x$.
Suppose also that~$a\leqslant\frac{8}{21}$.
Then the log pair $(S,\frac{6}{5}D)$ is log canonical.
\end{proposition}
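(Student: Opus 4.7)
Suppose for contradiction that $(S,\tfrac{6}{5}D)$ fails to be log canonical at some point $P\in S$. Writing $\tfrac{6}{5}D=\tfrac{6a}{5}C_x+\tfrac{6}{5}\Delta$, I first observe that the coefficient $\tfrac{6a}{5}\leqslant\tfrac{48}{105}<1$, so the curve $C_x$ on its own cannot create the singularity at $P$. The plan is a case analysis on $P$: smooth or singular in $S$, on $C_x$ or not.

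Suppose first that $P$ is a smooth point of $S$. If $P\in C_x$, Lemma \ref{lemma:inversion-of-adjunction} applied to $\tfrac{6}{5}D$ with $C=C_x$ forces $C_x\cdot\Delta>\tfrac{5}{6}$; but $C_x\cdot\Delta=D\cdot C_x-aC_x^2$ equals $\tfrac{1-a}{7}$ for $S_{15}$ and $\tfrac{2-a}{10}$ for $S_{12}$, both at most $\tfrac{1}{5}$, a contradiction. If $P\notin C_x$, Lemma \ref{lemma:mult-1} gives $\mathrm{mult}_P(\Delta)>\tfrac{5}{6}$, and I will exhibit an irreducible test curve $C$ through $P$, not contained in $\mathrm{Supp}(\Delta)$, for which $C\cdot\Delta<\tfrac{5}{6}\,\mathrm{mult}_P(C)$, contradicting Corollary \ref{corollary:mult-1}. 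For $S_{15}$ a generic member of the pencil spanned by $x^3$ and $y$ (a curve of class $3H$) works at points where $xy\ne 0$, since it gives $C\cdot\Delta=\tfrac{3(1-a)}{7}<\tfrac{5}{6}$; the residual points on $C_y\setminus C_x$ are dispatched using curves in $|5H|$ or $|7H|$, and low-degree analogues handle $S_{12}$. Components of $\Delta$ that coincide with a chosen test curve are absorbed into the coefficient $a$ in the standard way.

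For the singular points of $S$ that do not lie on $C_x$, which occur only for $S_{12}$ at the three $A_1$ points $O_x,Q_1,Q_2$, Lemma \ref{lemma:mult-1-n} with $n=2$ gives $D\cdot C>\tfrac{5}{12}$ for every curve $C$ through the point not contained in $\mathrm{Supp}(D)$, and a coordinate curve through each such point yields a small value of $D\cdot C$ coming from $D\sim_{\mathbb{Q}}-K_S$, producing the desired contradiction.

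The heart of the argument, and the main obstacle, is the case $P=O_t$ (singular and on $C_x$). In the uniformizing chart the local equation of $C_x$ at $O_t$ is $y^5+z^3=0$ for $S_{15}$ and $y^4+z^3=0$ for $S_{12}$, not a coordinate axis, so $(S,C_x)$ is not purely log terminal at $O_t$ and Lemma \ref{lemma:inversion-of-adjunction-orbifold} does not apply directly with $C=C_x$. I will instead perform the weighted blow-up $f\colon\widetilde{S}\to S$ of $O_t$ with weights equal to those of the local cyclic action: $(3,5)$ for $S_{15}$ and $(3,4)$ for $S_{12}$. Then $\widetilde{C}_x\sim_{\mathbb{Q}}f^{\ast}C_x-\tfrac{15}{7}E$ in the $S_{15}$ case, and writing $\widetilde{D}\sim_{\mathbb{Q}}f^{\ast}D-mE$ the inequality $\widetilde{\Delta}\cdot\widetilde{C}_x\geqslant 0$ forces $m\leqslant\tfrac{14a+1}{7}$. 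By Proposition \ref{proposition:lc-invariant-under-blowup} the log canonicity of $(S,\tfrac{6}{5}D)$ at $O_t$ is equivalent to that of $(\widetilde{S},\tfrac{6}{5}\widetilde{D}+(\tfrac{6m}{5}-\tfrac{1}{7})E)$ along $E$. The hypothesis $a\leqslant\tfrac{8}{21}$ is calibrated precisely so that the coefficient of $E$ is at most $\tfrac{33}{35}<1$, reducing the problem to a local check at the two remaining cyclic quotient points of $\widetilde{S}$ on $E$; these are treated with Lemmas \ref{lemma:mult-1-n} and \ref{lemma:inversion-of-adjunction-orbifold} using $E^2=-\tfrac{7}{15}$ and the fact that $\widetilde{C}_x$ meets $E$ away from them. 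The parallel analysis with weights $(3,4)$ for $S_{12}$ is essentially identical in structure.
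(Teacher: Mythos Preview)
Your overall architecture---case split on $P$, inversion of adjunction along $C_x$ on its smooth locus, a test-curve argument off $C_x$, and a weighted blow-up at $O_t$---is exactly the paper's strategy. Two points need repair.

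For smooth points $P\notin C_x$, the single member of your pencil $\{y+cx^3=0\}$ passing through $P$ may well be a component of $\Delta$, so Corollary~\ref{corollary:mult-1} is not directly available; your remark about ``absorbing'' such components is too vague. The paper's device is to choose a pencil whose \emph{base locus} contains $P$ (for $S_{15}$: $\nu(z+c_1x^5)+\mu(yx^2+c_2x^5)=0$, for $S_{12}$: $\nu(y^2+c_1x^3)+\mu(zx+c_2x^3)=0$), so that a \emph{general} member passes through $P$ and is automatically not in $\mathrm{Supp}(D)$. Also, your value $C_x\cdot\Delta=\frac{2-a}{10}$ on $S_{12}$ is mis-computed; with $C_x\sim 2H$, $D\sim 2H$ and $H^2=\frac{1}{10}$ one gets $C_x\cdot\Delta=\frac{2(1-a)}{5}$, though this does not affect the conclusion.

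The real gap is at $O_t$. After the weighted blow-up you correctly bound the $E$-coefficient by $\frac{33}{35}<1$, but you then assert that the problem reduces to the two cyclic quotient points on $E$. This is unjustified: the log pair on $\widetilde S$ could fail to be log canonical at \emph{any} point of $E$, and in the paper the decisive check is at the \emph{smooth} point $Q=E\cap\widetilde C_x$, where the term $a\widetilde C_x$ contributes. Concretely, writing $\widetilde\Delta\sim f^*\Delta-m'E$ with $m'\leqslant\frac{1-a}{7}$ (resp.\ $\frac{1-2a}{5}$), one has $E\cdot\widetilde\Delta=\frac{7m'}{15}$ (resp.\ $\frac{5m'}{12}$), which rules out every $Q\ne E\cap\widetilde C_x$ via Lemmas~\ref{lemma:inversion-of-adjunction} and \ref{lemma:inversion-of-adjunction-orbifold}; then at $E\cap\widetilde C_x$ one needs $(a\widetilde C_x+\widetilde\Delta)\cdot E=a+E\cdot\widetilde\Delta\leqslant\frac{5}{6}$, and it is here (not only in the $E$-coefficient bound) that $a\leqslant\frac{8}{21}$ is used. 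Your write-up omits this step entirely.
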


\begin{corollary}
\label{corollary:1}
One has $\delta(S)\geqslant\frac{6}{5}$.
\end{corollary}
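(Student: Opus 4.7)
The plan is to deduce the corollary directly from Proposition~\ref{proposition:1} together with Corollary~\ref{corollary:a-simple}, invoking the characterization of the $\delta$-invariant as an asymptotic log canonical threshold over $k$-basis type divisors. Concretely, by Fujita--Odaka, to show $\delta(S)\geqslant\frac{6}{5}$ it suffices to prove that, for every sufficiently large $k$ and every $\mathbb{Q}$-divisor $D$ of $k$-basis type with $D\sim_{\mathbb{Q}}-K_S$, the log pair $(S,\frac{6}{5}D)$ is log canonical. Write $D=aC_x+\Delta$ as in Proposition~\ref{proposition:1}. The assumption $b_1=0$ is already in force (otherwise Corollary~\ref{corollary:1-delta-easy} gives a stronger bound), so Proposition~\ref{proposition:1} applies provided that $a\leqslant\frac{8}{21}$. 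Thus everything reduces to bounding the coefficient $a$ of $C_x$ in a $k$-basis type divisor.

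To bound $a$, I would apply Corollary~\ref{corollary:a-simple} to $C=C_x$. For $S_{15}\subset\mathbb{P}(1,3,5,7)$ one has $I=1$ so $-K_{S_{15}}\sim_{\mathbb{Q}}\mathcal{O}_{\mathbb{P}}(1)|_{S_{15}}$, and since $x$ has weight $1$ we get $C_x\sim_{\mathbb{Q}}-K_{S_{15}}$. For $S_{12}\subset\mathbb{P}(2,3,4,5)$ one has $I=2$ so $-K_{S_{12}}\sim_{\mathbb{Q}}\mathcal{O}_{\mathbb{P}}(2)|_{S_{12}}$, and since $x$ has weight $2$ we again get $C_x\sim_{\mathbb{Q}}-K_{S_{12}}$. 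In either case the constant $\mu$ of Corollary~\ref{corollary:a-simple} equals $1$, so
$$
a\leqslant\frac{1}{3}+\epsilon_k,
$$
where $\epsilon_k\to 0$ as $k\to\infty$.

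Since $\frac{8}{21}-\frac{1}{3}=\frac{1}{21}>0$, for all $k$ sufficiently large we have $a\leqslant\frac{1}{3}+\epsilon_k\leqslant\frac{8}{21}$. The hypothesis of Proposition~\ref{proposition:1} is then satisfied, so $(S,\frac{6}{5}D)$ is log canonical for every such $D$. Taking the infimum over $k$-basis type divisors and letting $k\to\infty$ yields $\delta(S)\geqslant\frac{6}{5}$.

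There is essentially no obstacle in deriving the corollary from Proposition~\ref{proposition:1}: it is a one-line consequence of the elementary linear equivalence $C_x\sim_{\mathbb{Q}}-K_S$ and the volume computation built into Corollary~\ref{corollary:a-simple}. All the geometric content, and hence the only genuinely hard step, lies in Proposition~\ref{proposition:1} itself, whose proof will require controlling the singularities of $(S,\tfrac{6}{5}D)$ at the orbifold points $O_t$ (and, for $S_{12}$, also $O_x$, $Q_1$, $Q_2$) by weighted blow-ups in the spirit of Proposition~\ref{proposition:lc-invariant-under-blowup} and Lemmas~\ref{lemma:inversion-of-adjunction}--\ref{lemma:inversion-of-adjunction-orbifold}.
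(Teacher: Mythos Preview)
Your proposal is correct and follows essentially the same route as the paper: write a $k$-basis type divisor as $D=aC_x+\Delta$, apply Corollary~\ref{corollary:a-simple} with $\mu=1$ (since $C_x\sim_{\mathbb{Q}}-K_S$ in both cases) to get $a\leqslant\frac{1}{3}+\epsilon_k\leqslant\frac{8}{21}$ for $k\gg 0$, then invoke Proposition~\ref{proposition:1}. The paper's proof is terser but identical in substance; your added justification of $\mu=1$ and the remark that the $b_1\ne 0$ case is already covered by Corollary~\ref{corollary:1-delta-easy} are exactly what the paper leaves implicit.
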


\begin{proof}
Let $D$ be a $\mathbb{Q}$-divisor of $k$-basis type divisor on $S$ with $k\gg 0$.
Write $D=aC_x+\Delta$, where~$a$ is a non-negative number, and $\Delta$ is an effective $\mathbb{Q}$-divisor on the surface $S$ whose support does not contain the curve $C_x$.
By Corollary~\ref{corollary:a-simple}, we have $a\leqslant\frac{8}{21}$ for $k\gg 0$.
Thus, the log pair $(S,\frac{6}{5}D)$ is log canonical for $k\gg 0$ by Proposition~\ref{proposition:1}.
This implies that $\delta(S)\geqslant\frac{6}{5}$ by Corollary~\ref{corollary:1-delta-easy}.
\end{proof}

To prove Proposition~\ref{proposition:1}, we
fix an effective $\mathbb{Q}$-divisor $D$ on the surface $S$ such that
$$
D\sim_{\mathbb{Q}} -K_{S}.
$$
Write $D=aC_x+\Delta$, where $a$ is a non-negative number, and $\Delta$ is an effective $\mathbb{Q}$-divisor on the surface $S$ whose support does not contain the curve $C_x$.
Suppose also that~$a\leqslant \frac{8}{21}$.
Let us show that the log pair $(S,\frac{6}{5}D)$ is log canonical.

\begin{lemma}
\label{lemma:1-smooth-points}
The log pair $(S,\frac{6}{5}D)$ is log canonical outside $C_x$.
\end{lemma}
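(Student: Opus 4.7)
The plan is to argue by contradiction: suppose $(S, \frac{6}{5}D)$ is not log canonical at some $P \notin C_x$. In the case $S = S_{15}$ the only singularity $O_t$ lies on $C_x$, so $P$ is smooth; in the case $S = S_{12}$, $P$ is either smooth or one of the three $\frac{1}{2}(1,1)$-singularities $O_x, Q_1, Q_2$, each of which lies on the curve $C_y$.

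For smooth $P$, I would use a pencil of small-degree curves on $S$ passing through $P$. For $S_{15}$, take the pencil $|\mathcal{O}_S(3)|$ spanned by $y$ and $x^3$, and for $S_{12}$ take the pencil $|\mathcal{O}_S(4)|$ spanned by $z$ and $x^2$. In either case the unique base point of the pencil is $O_t \in C_x$, so for every $P \notin C_x$ there is a unique member $C$ of the pencil through $P$. Inspection of the defining equation of $S$ (using the assumption $b_1=0$) ensures that $C$ is irreducible and smooth at $P$ for all but finitely many $P$, with the exceptional cases handled by an auxiliary pencil. Writing $D = aC_x + bC + \Delta_0$ with $\Delta_0$ containing neither $C_x$ nor $C$, the identity $D \cdot C = (-K_S) \cdot C$ and the inequality $\Delta_0 \cdot C \geq 0$ force $\frac{6b}{5} < 1$. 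Then Lemma~\ref{lemma:inversion-of-adjunction}, applied to $(S, \frac{6}{5}D)$ at $P$ (noting that $C_x$ does not meet $P$), would give $\frac{6}{5}\Delta_0 \cdot C > 1$; but a direct computation of $\Delta_0 \cdot C$ from $C^2$, $C \cdot C_x$, and $C \cdot D$ yields a value strictly smaller than $1$, a contradiction.

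For the singular points $O_x, Q_1, Q_2$ of $S_{12}$, I would use the curve $C_y$, which is smooth at each of these points, so $(S, C_y)$ is purely log terminal there by Remark~\ref{remark:plt}. Applying the orbifold inversion of adjunction (Lemma~\ref{lemma:inversion-of-adjunction-orbifold}) to $(S, \frac{6}{5}D)$ with $C = C_y$ yields the necessary condition $b := \mathrm{mult}_{C_y} D < \frac{11}{54}$, which alone is insufficient. To conclude, I would pass to the minimal resolution $\pi : \tilde S \to S$ of the singular point, with $(-2)$-exceptional curve $E$, and write $\pi^*D = \tilde D + \nu E$. The bound $\tilde D \cdot \tilde C_y \geq 0$ yields $\nu \leq \frac{3}{5} < \frac{5}{6}$, which reduces the non-log-canonicity of $(\tilde S, \frac{6}{5}\tilde D + \frac{6\nu}{5}E)$ to that at a single point $Q \in E \cap \tilde D$. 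A case analysis of the possible location of $Q$ on $E$---namely at $\tilde C_y \cap E$, at $\tilde C_t \cap E$, or at an intersection of $E$ with the strict transform of the member of $|\mathcal{O}(4)|$ through $P$---combined with Lemma~\ref{lemma:inversion-of-adjunction} applied on the smooth surface $\tilde S$ rules out each possibility via an explicit intersection computation.

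The main obstacle is the analysis at the $\frac{1}{2}$-singular points of $S_{12}$: a single orbifold inversion of adjunction is not enough, and the finishing argument requires passage to the minimal resolution together with careful accounting of the strict transforms of $C_y$, $C_t$, and the member of $|\mathcal{O}(4)|$ through $P$ on the exceptional $(-2)$-curve.
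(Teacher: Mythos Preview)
Your approach diverges from the paper's in both parts and, as written, has real gaps.

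For smooth $P\notin C_x$, the paper does \emph{not} fix a pencil with base point $O_t$ and take the unique member through $P$. Instead, for each such $P$ it constructs a pencil $\mathcal{P}$ (of degree $5$ on $S_{15}$, degree $6$ on $S_{12}$) whose base locus is finite and \emph{contains $P$}; then the general member $C\in\mathcal{P}$ passes through $P$ and is automatically not a component of $D$, so Corollary~\ref{corollary:mult-1} applies with no smoothness or irreducibility check. In your scheme you need the \emph{specific} member of $|\mathcal{O}(3)|$ (resp.\ $|\mathcal{O}(4)|$) through $P$ to be irreducible and smooth at $P$. This is not free: if some member $C_\lambda$ of your pencil happens to be reducible, then for \emph{every} $P\in C_\lambda\setminus C_x$ --- a one-dimensional locus, not a finite set --- the decomposition $D=aC_x+bC+\Delta_0$ and the bound on $b$ break down, so your ``finitely many exceptional $P$ handled by an auxiliary pencil'' does not cover this situation. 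Tailoring the pencil so that $P$ is a base point is precisely the device that sidesteps all of this.

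For the singular points $O_x,Q_1,Q_2$ of $S_{12}$, the paper avoids any resolution. It uses the standard convexity trick \cite[Remark~2.22]{CheltsovShramovUMN}: since $C_y$ is irreducible and the pair $(S_{12},\tfrac{6}{5}\cdot\tfrac{2}{3}C_y)$ is log canonical, one may replace $D$ by an effective $D'\sim_{\mathbb{Q}}-K_{S_{12}}$ whose support does not contain $C_y$ and such that $(S_{12},\tfrac{6}{5}D')$ is still not log canonical at $P$; then a single application of Lemma~\ref{lemma:mult-1-n} with $C=C_y$ finishes. Your resolution-based plan is only a sketch: the bound $\nu\leqslant\tfrac{3}{5}$ from $\widetilde{D}\cdot\widetilde{C}_y\geqslant 0$ already presupposes that $C_y$ is not in the support of $D$ (otherwise the negative term $b\,\widetilde{C}_y^{\,2}$ enters), and the promised ``case analysis of the location of $Q$ on $E$'' is not carried out. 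The convexity reduction is the missing idea that turns the singular-point case into a two-line computation rather than a blow-up argument.
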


\begin{proof}
The required assertion follows from \cite[Lemma~2.7]{CheltsovParkShramovJGA}.
For convenience of the reader, let us give the detailed proof here.
Let $P$ be a point in $S\setminus C_x$. Since $P\not\in C_x$, there are complex numbers $c_1$ and $c_2$ such that $P$ satisfies the following system of equations:$$
\left\{%
\aligned
&z+c_1x^5=0\\%
&y+c_2x^3=0  \ \ \ \mbox{ for $S_{15}$;}\\%
\endaligned\right.%
$$
$$
\left\{%
\aligned
&y^2+c_1x^3=0\\%
&z+c_2x^2=0 \ \ \ \mbox{ for $S_{12}$.}\\%
\endaligned\right.%
$$

Let $\mathcal{P}$ be the pencil of curves that is given by
$$
\nu\big(z+c_1x^5\big)+\mu\big(yx^2+c_2x^5\big)=0 \ \ \  \mbox{ on  $S_{15}$},
$$
$$
\nu\big(y^2+c_1x^3\big)+\mu \big(zx+c_2x^3\big)=0  \ \ \  \mbox{ on   $S_{12}$}
$$
for $[\nu:\mu]\in\mathbb{P}^1$. Then the base locus of the pencil $\mathcal{P}$ consists of finitely many points.
Moreover, by construction, the point $P$ is one of them.
Let $C$  be a general curve in $\mathcal{P}$.
Then
$$C\cdot D\leqslant \frac{5}{6},$$ so that $(S,\frac{6}{5}D)$ is log canonical at~$P$ by Corollary~\ref{corollary:mult-1} if
$P$ is a smooth point of the surface $S$. This verifies the statement for $S_{15}$.

For $S_{12}$, we suppose that $(S_{12},\frac{6}{5}D)$ is not log canonical at $P$. Then  $P$ must be one of the points $O_x$, $Q_1$,$Q_2$. Observe that the point $P$ belongs to the curve  $C_y$ cut by $y=0$.
Moreover, the curve $C_y$ is irreducible and the log pair $(S_{12},\frac{6}{5}\cdot\frac{2}{3}C_y)$ is log canonical.
Thus, it follows from \cite[Remark~2.22]{CheltsovShramovUMN} that there exists
an effective $\mathbb{Q}$-divisor $D^\prime$ on the surface $S_{12}$ such that
$$
D^\prime\sim_{\mathbb{Q}} -K_{S_{12}},
$$
the log pair $(S_{12},\frac{6}{5}D^\prime)$ is not log canonical at the point $P$,
and the support of the divisor $D^\prime$ does not contain the curve $C_y$.
However,
$$D^\prime\cdot C_y=\frac{6}{10},$$
which is impossible by Lemma~\ref{lemma:mult-1-n} since $(S_{12},\frac{6}{5}D^\prime)$ is not log canonical at the point $P$.
This completes the proof for $S_{12}$.
\end{proof}

\begin{lemma}
\label{lemma:1-smooth-points-C-x}
 The log pair $(S,\frac{6}{5}D)$ is log canonical at a point in $C_x\setminus \{O_t\}$.
\end{lemma}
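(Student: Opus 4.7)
The plan is to argue by contradiction using the inversion of adjunction, Lemma~\ref{lemma:inversion-of-adjunction}. I would first verify that every point $P\in C_x\setminus\{O_t\}$ is simultaneously a smooth point of $S$ and a smooth point of $C_x$. For $S_{15}$ this is immediate, since $O_t$ is the unique singular point of the surface. For $S_{12}$ the remaining three singular points $O_x$, $Q_1$, $Q_2$ all have $x$-coordinate equal to $1$, so they do not lie on $C_x$. Smoothness of $C_x$ at any such $P$ follows by inspecting the defining equation, which reduces to $z^3+y^5=0$ in $\mathbb{P}(3,5,7)$ (respectively $z^3+y^4=0$ in $\mathbb{P}(3,4,5)$) once the $yzt$-term has been killed by the assumption $b_1=0$: in the charts $\{y\ne 0\}$ and $\{z\ne 0\}$ the curve is manifestly smooth, and the only remaining chart $\{t\ne 0\}$ is a neighbourhood of $O_t$.

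Since $a\leq\frac{8}{21}$, the coefficient $\frac{6a}{5}$ of $C_x$ in $\frac{6}{5}D$ is bounded by $\frac{16}{35}<1$, so the hypothesis of Lemma~\ref{lemma:inversion-of-adjunction} is satisfied. If the pair $(S,\frac{6}{5}D)$ were not log canonical at some such $P$, then applying the lemma to the decomposition $\frac{6}{5}D=\frac{6a}{5}C_x+\frac{6}{5}\Delta$ (with $C_x$ not in the support of $\frac{6}{5}\Delta$) would force
$$\frac{6}{5}\,C_x\cdot\Delta>1.$$

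To reach a contradiction I would compute this intersection number directly. On both surfaces the weight of $x$ equals $I$, hence $C_x\sim_{\mathbb{Q}}-K_S$, so that
$$C_x\cdot\Delta=C_x\cdot D-a\,C_x^2=(1-a)(-K_S)^2\leq (-K_S)^2.$$
The standard formula gives $(-K_{S_{15}})^2=\tfrac{1}{7}$ and $(-K_{S_{12}})^2=\tfrac{2}{5}$, so in either case
$$\frac{6}{5}\,C_x\cdot\Delta\leq \frac{6}{5}\cdot\frac{2}{5}=\frac{12}{25}<1,$$
contradicting the displayed inequality. The whole argument is essentially a one-line intersection-number check; the only bookkeeping item is the smoothness verification for $C_x$, which is exactly where the exclusion of $O_t$ is used. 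No substantial obstacle is anticipated.
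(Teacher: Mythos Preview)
Your proposal is correct and follows essentially the same approach as the paper: verify that $P$ is a smooth point of both $S$ and $C_x$, note that the coefficient $\tfrac{6a}{5}<1$, and then apply inversion of adjunction (Lemma~\ref{lemma:inversion-of-adjunction}) together with the bound $C_x\cdot\Delta\leqslant(-K_S)^2$. Your packaging of the intersection computation via $C_x\sim_{\mathbb Q}-K_S$ is slightly cleaner than the paper's case-by-case formulas (and in fact your value $\tfrac{2(1-a)}{5}$ for $S_{12}$ is the correct one; the paper's $\tfrac{1-2a}{5}$ is a typo, though harmless for the argument).
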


\begin{proof}
Let $P$ be a point in $C_x\setminus \{O_t\}$. Observe that $P$ is a smooth point of the surface $S$, and~$C_x$ is smooth at the point~$P$.
Note also that $\frac{6}{5}a<1$.
Thus, we can apply Lemma~\ref{lemma:inversion-of-adjunction} to~\mbox{$(S,\frac{6}{5}D)$} and the curve $C_x$ at the point $P$.
Indeed, since
$$
\Big(C_x\cdot\Delta\Big)_P\leqslant C_x\cdot\Delta =\frac{1-a}{7}\leqslant \frac{5}{6}  \ \ \  \mbox{ on  $S_{15}$},
$$
$$
\Big(C_x\cdot\Delta\Big)_P\leqslant C_x\cdot\Delta =\frac{1-2a}{5}\leqslant \frac{5}{6}  \ \ \  \mbox{ on  $S_{12}$},
$$
the log pair $(S,\frac{6}{5}D)$ must be log canonical at $P$.
\end{proof}

Note that $S_{15}$ (resp. $S_{12}$) has singularity of type $\frac{1}{7}(3,5)$  (resp. $\frac{1}{5}(3,4)$) at the point $O_t$.
In the chart defined by $t=1$, the surface $S_{15}$ is given by
\begin{multline*}
z^{3}+y^{5}+x+b_2xy^{3}z+b_3x^{2}yz^{2}+b_4x^{2}y^{2}+\\
+b_5x^{3}z+b_6x^{3}y^{4}+b_{7}x^{4}y^{2}z+b_{8}x^{5}z^{2}+b_{9}x^{5}y+b_{10}x^{6}y^{3}+\\
+b_{11}x^{7}yz+b_{12}x^{8}+b_{13}x^{9}y^{2}+b_{14}x^{10}z+b_{15}x^{12}y+b_{16}x^{15}=0,
\end{multline*}
and $S_{12}$  by
$$
z(z-x^2)(z-\epsilon x^2)+y^4 +x+a_1yz+a_2xy^2z+a_3x^2y+a_4x^3y^2=0.
$$
Thus, in a neighborhood of the point $O_t$, we may regard $y$ and $z$ as local weighted coordinates with $\mathrm{wt}(y)=3$ and $\mathrm{wt}(z)=5$ for $S_{15}$ and with $\mathrm{wt}(y)=3$ and $\mathrm{wt}(z)=4$ for $S_{12}$.

Let $f\colon\widetilde{S}\to S$ be the weighted blow up at the singular point $O_t$ with weights $\mathrm{wt}(y)=3$,
$\mathrm{wt}(z)=5$ for $S_{15}$ and with  weights $\mathrm{wt}(y)=3$, $\mathrm{wt}(z)=4$ for $S_{12}$.
Denote by $E$ the exceptional curve of the blow up $f$.
Then
$$
K_{\widetilde{S}_{15}}\sim_{\mathbb{Q}} f^{*}\big(K_{S_{15}}\big)+\frac{1}{7}E;
$$
$$
K_{\widetilde{S}_{12}}\sim_{\mathbb{Q}} f^{*}\big(K_{S_{12}}\big)+\frac{2}{5}E.
$$
The surface $S$ has two singular points in $E$.
One  is a point of type $\frac{1}{3}(1,1)$,
and the other is a singular point of type $\frac{1}{5}(1,1)$ on $\widetilde{S}_{15}$ ( $\frac{1}{4}(1,1)$ on $\widetilde{S}_{12}$).
Denote the former by $O_3$ and the latter  by $O$.
Observe that
$$E^2=-\frac{7}{15}  \ \ \  \mbox{ on  $\widetilde{S}_{15}$} ;$$
$$E^2=-\frac{5}{12}  \ \ \  \mbox{ on  $\widetilde{S}_{12}$} ;$$
and $E\cong\mathbb{P}^1$.

Let $\widetilde{C}_x$ be the proper transform of the curve $C_x$ on the surface $\widetilde{S}$.
Then
$$
\widetilde{C}_x\sim_{\mathbb{Q}} f^{*}\big(C_x\big)-cE \ \ \  \mbox{ for  $S_{15}$},
$$
where $c=\frac{15}{7}$ for  $S_{15}$ and $c=\frac{12}{5}$ for  $S_{12}$,
and the intersection $E\cap\widetilde{C}_x$ consists of a single point,
which is different from $O_3$ and $O$.
Note that the curves $E$ and $\widetilde{C}_x$ intersect transversally at the point $E\cap\widetilde{C}_x$.

Denote by $\widetilde{\Delta}$ be the proper transform of the $\mathbb{Q}$-divisor $\Delta$ on the surface $\widetilde{S}$.
Then
$$
\widetilde{\Delta}\sim_{\mathbb{Q}} f^{*}\big(\Delta\big)-mE
$$
for some non-negative rational number $m$. To estimate it, observe that
$$
0\leqslant\widetilde{C}_x\cdot\widetilde{\Delta}=\Big(f^{*}\big(C_x\big)-cE\Big)\cdot\Big(f^{*}\big(\Delta\big)-mE\Big)=C_x\cdot\Delta-m=C_x\cdot (D-aC_x)-m,
$$
so that $m\leqslant\frac{1-a}{7}$ for $S_{15}$ and $m\leqslant\frac{1-2a}{5}$ for $S_{12}$. Now we are ready to prove

\begin{lemma}
\label{lemma:1-P-O-t}
The log pair $(S,\frac{6}{5}D)$ is log canonical at~$O_t$.
\end{lemma}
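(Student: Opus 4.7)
The plan is to apply Proposition~\ref{proposition:lc-invariant-under-blowup} to the weighted blow-up $f\colon \widetilde S\to S$ and thereby reduce log canonicity of $(S, \tfrac{6}{5}D)$ at $O_t$ to log canonicity along the exceptional curve $E$ of the pair
\[
\Bigl(\widetilde S,\ \tfrac{6a}{5}\widetilde{C}_x + \tfrac{6}{5}\widetilde{\Delta} + e\,E\Bigr),
\]
where $e$ is the coefficient prescribed by the proposition applied to $\tfrac{6}{5}D$, namely $e = \tfrac{6}{5}(ac + m) - d$, with $d$ the discrepancy of $E$ (so $d = \tfrac{1}{7}$ for $S_{15}$ and $d = \tfrac{2}{5}$ for $S_{12}$). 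Substituting the hypothesis $a \le \tfrac{8}{21}$ together with the bounds $m \le \tfrac{1-a}{7}$ (resp.\ $m \le \tfrac{1-2a}{5}$) already recorded, a routine arithmetic check gives $e \le \tfrac{33}{35}$ for $S_{15}$ and $e \le \tfrac{132}{175}$ for $S_{12}$; in particular $e < 1$. Likewise $\tfrac{6a}{5} \le \tfrac{16}{35} < 1$, so both boundary coefficients lie in the range where inversion of adjunction applies.

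Log canonicity along $E$ is then verified point by point. At the two cyclic quotient singularities $O_3$ of type $\tfrac{1}{3}(1,1)$ and $O$ of type $\tfrac{1}{5}(1,1)$ (for $S_{15}$) or $\tfrac{1}{4}(1,1)$ (for $S_{12}$), the proper transform $\widetilde{C}_x$ does not pass through, and $(\widetilde S, E)$ is purely log terminal by Remark~\ref{remark:plt}. Hence Lemma~\ref{lemma:inversion-of-adjunction-orbifold} applied with $C = E$ shows that non-log-canonicity at such a point would force $E \cdot \tfrac{6}{5}\widetilde{\Delta} > \tfrac{1}{n_O}$, where $n_O \in \{3,4,5\}$ is the local index. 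But $E \cdot \widetilde{\Delta} = -mE^2$ is bounded by $\tfrac{1}{15}$ for $S_{15}$ and by $\tfrac{1}{12}$ for $S_{12}$, yielding $E \cdot \tfrac{6}{5}\widetilde{\Delta} \le \tfrac{2}{25}$ and $\tfrac{1}{10}$ respectively, comfortably below the corresponding thresholds.

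At the smooth point $P := E \cap \widetilde{C}_x$ of $\widetilde S$ we apply Lemma~\ref{lemma:inversion-of-adjunction} with $C = E$. Using $\widetilde{C}_x \sim_{\mathbb{Q}} f^*C_x - cE$ together with $E^2 = -\tfrac{1}{c}$ one computes $E \cdot \widetilde{C}_x = 1$, so the local intersection $\bigl(E \cdot \bigl(\tfrac{6a}{5}\widetilde{C}_x + \tfrac{6}{5}\widetilde{\Delta}\bigr)\bigr)_P$ is at most $\tfrac{6a}{5} + \tfrac{6}{5}(E \cdot \widetilde{\Delta})$, and direct substitution shows this stays strictly below $1$ in both cases. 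Finally, at any other smooth point $Q$ of $\widetilde S$ lying on $E$, the only boundary component passing through $Q$ with global effect is $E$ itself (with coefficient $e < 1$), while the local contribution of $\widetilde{\Delta}$ is bounded above by its global intersection with $E$; Lemma~\ref{lemma:inversion-of-adjunction} then yields log canonicity at $Q$.

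The main obstacle I foresee is the estimate $e < 1$: the hypothesis $a \le \tfrac{8}{21}$ and the bound on $m$ coming from $\widetilde{C}_x \cdot \widetilde{\Delta} \ge 0$ are exactly what force it, and a noticeably slacker version of either would let $e$ exceed $1$ and collapse the whole reduction to inversion of adjunction. Once $e < 1$ is secured, the analysis at the two auxiliary singular points of $\widetilde S$ is relatively easy, since the indices $n_O \in \{3, 4, 5\}$ are small enough to absorb the modest intersection $E \cdot \widetilde{\Delta}$.
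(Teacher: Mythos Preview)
Your proposal is correct and follows essentially the same approach as the paper: pull back via the weighted blow-up $f$, verify that the coefficient of $E$ (your $e$, the paper's $\mu$) is at most $1$ using $a\leqslant\tfrac{8}{21}$ and $m\leqslant\tfrac{1-a}{7}$ (resp.\ $m\leqslant\tfrac{1-2a}{5}$), and then apply inversion of adjunction along $E$ (Lemmas~\ref{lemma:inversion-of-adjunction} and~\ref{lemma:inversion-of-adjunction-orbifold}) to rule out non-log-canonicity at the singular points $O_3,O$, at the smooth intersection $E\cap\widetilde{C}_x$, and at the remaining smooth points of $E$. The paper frames this as a proof by contradiction (assume non-lc at some $Q\in E$, then eliminate each location of $Q$), whereas you verify each case directly, but the computations and the lemmas invoked are identical.
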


\begin{proof}
Suppose that the log pair $(S,\frac{6}{5}D)$ is not log canonical at~$O_t$. Let us seek for a contradiction.
Let $\lambda=\frac{6}{5}$. Then
$$
K_{\widetilde{S}}+\lambda a\widetilde{C}_x+\lambda\widetilde{\Delta}+\mu E\sim_{\mathbb{Q}}f^*\big(K_S+\lambda D\big),
$$
where $$\mu=\frac{15\lambda a}{7}+\lambda m-\frac{1}{7} \ \ \  \mbox{ for  $S_{15}$},$$
$$\mu=\frac{12\lambda a}{5}+\lambda m-\frac{2}{5} \ \ \  \mbox{ for  $S_{12}$}.$$
Thus, the log pair
\begin{equation}
\label{equation:1-log-pull-back}
\Big(\widetilde{S},\lambda a\widetilde{C}_x+\lambda\widetilde{\Delta}+\mu E\Big)
\end{equation}
is not log canonical at some point $Q\in E$.
Note that
$
\mu\leqslant 1
$
because  $m\leqslant\frac{1-a}{7}$ (or  $m\leqslant\frac{1-2a}{5}$)  and~\mbox{$a\leqslant\frac{8}{21}$}.

We first apply Lemmas~\ref{lemma:inversion-of-adjunction} or~\ref{lemma:inversion-of-adjunction-orbifold}  to  \eqref{equation:1-log-pull-back} and the curve $E$ at the point $Q$.
Indeed,
\[E\cdot\widetilde{\Delta}=E\cdot (f^{*}\big(\Delta\big)-mE)=-mE^2=\left\{%
\aligned
&\frac{7m}{15}\leqslant  \frac{1-a}{15} \leqslant \frac{1}{6}\ \ \ \text{ on $\widetilde{S}_{15}$},\\%
&\frac{5m}{12} \leqslant  \frac{1-2a}{12} \leqslant \frac{5}{24} \ \ \  \text{ on $\widetilde{S}_{12}$}.\\%
\endaligned\right.%
\]
This shows that $Q$ must be the intersection point of $E$ and $\widetilde{C}_x$.

Applying Lemma~\ref{lemma:inversion-of-adjunction} again, we see that
\[\frac{5}{6}=\frac{1}{\lambda}<\Big(a\widetilde{C}_x+\widetilde{\Delta}\Big)\cdot E= a+\widetilde{\Delta}\cdot E =\left\{%
\aligned
& a+\frac{7m}{15}\leqslant  a+ \frac{1-a}{15} \ \ \ \text{ on $\widetilde{S}_{15}$},\\%
& a+\frac{5m}{12} \leqslant  a+ \frac{1-2a}{12} \ \ \  \text{ on $\widetilde{S}_{12}$}.\\%
\endaligned\right.%
\]
However, these inequalities  contradict our assumption $a\leqslant\frac{8}{21}$. Therefore, the log pair $(S,\frac{6}{5}D)$ is log canonical at~$O_t$.
\end{proof}

Proposition~\ref{proposition:1} is completely verified.

\section{Case B}
\label{section:case-b}
The way to evaluate $\delta$-invariants for Case B is almost same as that of Case A. In spite of this, we write the proof for the readers' convenience.

In this section, we consider the following two types of quasismooth hypersurfaces:
\begin{itemize}
\item $S_{64}$ : a quasismooth hypersurface in $\mathbb{P}(7,15,19,32)$ of degree $64$;
\item $S_{82}$ : a quasismooth hypersurface in $\mathbb{P}(7,19,25,41)$ of degree $82$.
\end{itemize}

As in the previous section, we use $S$ for the surfaces $S_{64}$ and $S_{82}$ if properties or conditions are satisfies by both the surfaces.

We may assume that the surface $S_{64}$
 is  given by  the equation
$$
t^{2}+y^{3}z+xz^{3}+x^{7}y=0
$$
in  $\mathbb{P}(7,15,19,32)$
and  $S_{82}$
 by the equation
$$
t^2+y^3z+xz^3+x^9y=0
$$
in $\mathbb{P}(7,19,25,41)$.
The surface  $S$ is singular at the points $O_x=[1:0:0:0]$, $O_y=[0:1:0:0]$ and $O_z=[0:0:1:0]$, and is smooth away from them.
Moreover, the surface $S_{64}$ (resp. $S_{82}$) has quotient singularity of types $\frac{1}{7}(5,4)$,
$\frac{1}{15}(7,2)$, $\frac{1}{19}(2,3)$ (resp. $\frac{1}{7}(2,3)$,
$\frac{1}{19}(7,3)$, $\frac{1}{25}(2,3)$) at the points  $O_x$, $O_y$, $O_z$, respectively.

Let $C_x$ be the curve in $S$ cut out by $x=0$
and~$C_y$ by $y=0$.
Then both the curves $C_x$ and~$C_y$ are irreducible.
We have
$$
\frac{35}{54}=\mathrm{lct}\Big(S_{64}, \frac{9}{7}C_x\Big)<\mathrm{lct}\Big(S_{64}, \frac{9}{15}C_y\Big)=\frac{25}{18},
$$
$$
\frac{7}{12}=\mathrm{lct}\Big(S_{82},\frac{10}{7}C_x\Big)<
\mathrm{lct}\Big(S_{82},\frac{10}{19}C_y\Big)=\frac{19}{12},%
$$
which imply   $\alpha(S_{64})\leqslant \frac{35}{54}$ and $\alpha(S_{82})\leqslant \frac{7}{12}$.
In fact, we have $\alpha(S_{64})=\frac{35}{54}$ and $\alpha(S_{82})=\frac{7}{12}$ by \cite[Theorem~1.10]{CheltsovParkShramovJGA}.

\begin{proposition}
\label{proposition:5}
Let $D$ be an effective $\mathbb{Q}$-divisor on $S$ such that
$$
D\sim_{\mathbb{Q}} -K_{S}.
$$
Write $D=aC_x+\Delta$, where $a$ is a non-negative number, and $\Delta$ is an effective $\mathbb{Q}$-divisor on the surface $S$ whose support does not contain the curve $C_x$.
Suppose also that~$a\leqslant\frac{1}{2}$.
Then the log pair $(S,\frac{19}{18}D)$ is log canonical.
\end{proposition}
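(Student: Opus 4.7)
The plan is to mirror the proof of Proposition~\ref{proposition:1}, adapted to the richer singular structure of $S=S_{64}$ or $S_{82}$: these surfaces have three singular points $O_x$, $O_y$, $O_z$, and the curve $C_x$ passes through $O_y$ and $O_z$ but not $O_x$. We fix $D=aC_x+\Delta$ as in the statement with $a\leqslant\frac{1}{2}$, set $\lambda=\frac{19}{18}$ (so in particular $\lambda a<1$), and rule out a hypothetical non-log-canonical point $P$ of $(S,\lambda D)$ according to its location.

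\textbf{Step 1} (the case $P\in S\setminus C_x$): exhibit an explicit pencil of curves through $P$ in the spirit of Lemma~\ref{lemma:1-smooth-points}, whose general member $C$ is irreducible and has $D\cdot C$ sufficiently small; Corollary~\ref{corollary:mult-1} then handles any smooth $P$, and Lemma~\ref{lemma:mult-1-n} handles the $\frac{1}{7}$-singularity $P=O_x$. \textbf{Step 2} (a smooth point $P$ of $C_x$): the numerical identity $C_x\cdot\Delta=C_x\cdot D-aC_x^{2}$ combined with $a\leqslant\frac{1}{2}$ gives $\lambda\,(C_x\cdot\Delta)<1$, and Lemma~\ref{lemma:inversion-of-adjunction} applied to $(S,\lambda D)$ and the curve $C_x$ forces log canonicity at $P$.

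\textbf{Step 3} is the substantive case $P\in\{O_y,O_z\}$. For each such point $Q$, of type $\frac{1}{n}(\alpha,\beta)$, we perform the weighted blow-up $f\colon\widetilde{S}\to S$ with weights $(\alpha,\beta)$; let $E$ be its exceptional curve, and write $\widetilde{C}_x\sim_{\mathbb{Q}}f^{*}(C_x)-cE$ and $\widetilde{\Delta}\sim_{\mathbb{Q}}f^{*}(\Delta)-mE$. The inequality $\widetilde{C}_x\cdot\widetilde{\Delta}\geqslant 0$ together with $E^{2}=-n/(\alpha\beta)$ bounds $m$ in terms of the previously computed value of $C_x\cdot\Delta$, and the hypothesis $a\leqslant\frac{1}{2}$ makes this bound strong enough. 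Proposition~\ref{proposition:lc-invariant-under-blowup} then reduces log canonicity of $(S,\lambda D)$ at $Q$ to log canonicity along $E$ of the pair
$$
\Bigl(\widetilde{S},\ \lambda a\widetilde{C}_x+\lambda\widetilde{\Delta}+\mu E\Bigr),\quad \mu=\lambda(ca+m)-\frac{\alpha+\beta-n}{n},
$$
and our bound on $m$ will give $\mu\leqslant 1$. Exactly as in Lemma~\ref{lemma:1-P-O-t}, we then apply Lemma~\ref{lemma:inversion-of-adjunction-orbifold} along $E$: since $E\cdot\widetilde{\Delta}=-mE^{2}$ is small, any non-log-canonical point of the above pair on $E$ must be $E\cap\widetilde{C}_x$, and a final intersection-number computation at that point produces the contradiction.

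The main expected obstacle lies in Step 3 at the point $O_z$ for both surfaces. Inspection of the local equation of $S$ near $O_z$ shows that $C_x$ acquires a cusp of type $y^{3}+t^{2}=0$ in local weighted coordinates, so the pair $(S,C_x)$ is \emph{not} purely log terminal at $O_z$ and the proper transform $\widetilde{C}_x$ may meet $E$ at a singular point of $\widetilde{S}$. Accordingly, the blow-up weights must be tailored so that the geometry of $\widetilde{C}_x\cap E$ gives clean numerics, and the inversion of adjunction at the intersection point must be applied in the orbifold form of Lemma~\ref{lemma:inversion-of-adjunction-orbifold} rather than Lemma~\ref{lemma:inversion-of-adjunction}. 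Once the position of $\widetilde{C}_x\cap E$ and of the singular points of $\widetilde{S}$ lying on $E$ are pinned down separately in each of the four cases $(S_d,Q)\in\{S_{64},S_{82}\}\times\{O_y,O_z\}$, the numerical inequalities should close up just as in Case A.
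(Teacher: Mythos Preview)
Your overall strategy is right and close to the paper's, but Step~1 has a genuine gap along the curve $C_y$. For $P\in C_y\setminus\{O_z\}$ (in particular for $P=O_x$), the natural low-degree pencils through $P$ all acquire $C_y$ as a fixed component: on $S_{64}$, for instance, the pencil $\nu(y^7+c_1x^{15})+\mu x^{8}(y^{2}z+c_2x^{7})=0$ degenerates to $y^{2}(\nu y^{5}+\mu x^{8}z)=0$ when $P\in C_y$ (since then $c_1=c_2=0$), and the only component of a general member passing through $P$ is $C_y$ itself. So you cannot bound $\mathrm{mult}_P(D)$ via a general pencil member, nor apply Lemma~\ref{lemma:mult-1-n} at $O_x$, without knowing that $C_y\not\subset\mathrm{Supp}(D)$---and nothing in the hypotheses gives you that. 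The paper closes this gap with a separate idea you did not mention: since $(S,\frac{19}{18}\cdot\frac{9}{15}C_y)$ (resp.\ $\frac{19}{18}\cdot\frac{10}{19}C_y$) is log canonical and $C_y$ is irreducible, \cite[Remark~2.22]{CheltsovShramovUMN} allows one to replace $D$ by an effective $D'\sim_{\mathbb{Q}}-K_S$ with $C_y\not\subset\mathrm{Supp}(D')$ while preserving the non--log-canonical point; then $C_y\cdot D'=\frac{18}{7\cdot 19}$ (resp.\ $\frac{4}{35}$) is small enough for Lemma~\ref{lemma:mult-1-n} or Lemma~\ref{lemma:inversion-of-adjunction} along $C_y$ to handle every $P\in C_y\setminus\{O_z\}$, including $O_x$.

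Two places where the paper is simpler than your plan. First, no blow-up at $O_y$ is needed: in the chart $y=1$ the curve $C_x$ is the coordinate axis $\{x=0\}$ (with $x,t$ as orbifold coordinates), so $(S,C_x)$ is purely log terminal at $O_y$ and Lemma~\ref{lemma:inversion-of-adjunction-orbifold} applied directly to $C_x$ suffices, using $C_x\cdot\Delta=\frac{18-14a}{285}\leqslant\frac{18}{19\cdot 15}$ on $S_{64}$ (similarly on $S_{82}$). Second, your anticipated obstacle at $O_z$ does not arise: under the weighted blow-up with $(\mathrm{wt}(y),\mathrm{wt}(t))=(2,3)$ the cusp $t^{2}+y^{3}=0$ is quasi-homogeneous of weight $6$, so $\widetilde C_x$ meets $E$ transversally at a single \emph{smooth} point of $\widetilde S$, away from the orbifold points $O_2,O_3$ of $E$. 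The endgame at $E\cap\widetilde C_x$ therefore uses the ordinary Lemma~\ref{lemma:inversion-of-adjunction}, not the orbifold version.
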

\begin{proof}
Suppose also that~$a\leqslant\frac{1}{2}$.

We first consider a point $P$ that lies neither on $C_x$ nor on $C_y$.
Observe that $P$ is a smooth point of the surface $S$.
Since $P\not\in C_x$, there are complex numbers $c_1$ and $c_2$ such that $P$ satisfies the following system of equations:
$$
\left\{%
\aligned
&y^7+c_1x^{15}=0\\%
&y^{2}z+c_2x^7=0 \ \ \ \mbox{ for $S_{64}$;}\,\\%
\endaligned\right.%
$$
$$
\left\{%
\aligned
&y^4+c_1x^5t=0\\%
&y^3+c_2xz^2=0 \ \ \ \mbox{ for $S_{82}$.}\\%
\endaligned\right.%
$$

Moreover, since $P\not\in C_y$, we have $c_1\ne 0$.
Let $\mathcal{P}$ be the pencil given by
$$
\nu\big(y^7+c_1x^{15}\big)+\mu x^8\big(y^{2}z+c_2x^7\big)=0 \ \ \ \mbox{ on  $S_{64}$;}
$$
$$
\nu\big(y^4+c_1x^5t\big)+\mu y\big(y^3+c_2xz^2\big)=0 \ \ \ \mbox{ on  $S_{82}$}
$$
for $[\nu:\mu]\in\mathbb{P}^1$.
The base locus of the pencil $\mathcal{P}$ consists of finitely many points.
Furthermore, by construction, the point $P$ is one of them.
Let $C$  be a general curve in $\mathcal{P}$. Then $$\mathrm{mult}_P(D)\leqslant C\cdot D\leqslant\frac{18}{19}.$$
It immediately follows from Corollary~\ref{corollary:mult-1}  that the log pair $(S,\frac{19}{18}D)$ is log canonical outside  $C_x$ and $C_y$.

We next consider a point $P$ on $C_x$ different from $O_z$.
Since $a\leqslant\frac{1}{2}$, we apply Lemmas~\ref{lemma:inversion-of-adjunction} and ~\ref{lemma:inversion-of-adjunction-orbifold} to the log pair $(S,\frac{18}{19}aC_x+\frac{18}{19}\Delta)$. Indeed, since
$$
\Big(C_x\cdot\Delta\Big)_P\leqslant C_x\cdot\Delta =\frac{18-14a}{285}\leqslant \frac{6}{95}  \ \ \  \mbox{ on  $S_{64}$},
$$
$$
\Big(C_x\cdot\Delta\Big)_P\leqslant C_x\cdot\Delta =\frac{20-14a}{475}\leqslant \frac{18}{19\cdot19}  \ \ \  \mbox{ on  $S_{82}$},
$$
the log pair $(S,\frac{19}{18}D)$ must be log canonical at $P$.

We now let $P$ be a point on $C_y$ different from $O_z$.
Suppose that the log pair $(S,\frac{19}{18}D)$
is not log canonical at the point $P$.
Recall that $(S_{64},\frac{19}{18}\cdot\frac{9}{15}C_y)$ and $(S_{82},\frac{19}{18}\cdot\frac{10}{19}C_y)$ are log canonical, and the curve $C_y$ is irreducible.
Thus, it follows from \cite[Remark~2.22]{CheltsovShramovUMN} that there exists
an effective $\mathbb{Q}$-divisor $D^\prime$ on the surface $S$ such that
$$
D^\prime\sim_{\mathbb{Q}} -K_{S},
$$
the log pair $(S,\frac{19}{18}D^\prime)$
is not log canonical at the point $P$
and the support of the divisor $D^\prime$ does not contain the curve $C_y$.
Observe
\[C_y\cdot D^\prime=\left\{\aligned
&\frac{18}{19\cdot 7} \ \ \ \text{ on $S_{64}$}\\%
&\frac{4}{35} \ \ \  \text{ on $S_{82}$}\\%
\endaligned\right\}\leqslant \frac{18}{19\cdot 7}.\]
This implies that
the log pair $(S,\frac{19}{18}D^\prime)$ is log canonical at the point $P$.
This contradicts our assumption.  Thus, we see that  $(S,\frac{19}{18}D)$ is log canonical away from $O_z$.
Hence, to complete the proof of Proposition~\ref{proposition:5}, we have to show that $(S,\frac{19}{18}D)$ is log canonical at the point $O_z$.

Recall that $S_{64}$ (resp. $S_{82}$) has singularity of type $\frac{1}{19}(2,3)$ (resp. $\frac{1}{25}(2,3)$) at the point $O_z$.
In the chart $z=1$, the surface $S_{64}$ is given by
$$
t^{2}+y^{3}+x+x^{7}y=0
$$
and $S_{82}$ by
\[
t^2+y^3+x+x^9y=0.
\]
In a neighborhoods of the point $O_z$, we can consider $y$ and $t$ as local weighted coordinates such that $\mathrm{wt}(y)=2$ and $\mathrm{wt}(t)=3$.

Let $f\colon\widetilde{S}\to S$ be the weighted blow up at the singular point $O_z$ with weights $\mathrm{wt}(y)=2$
and~\mbox{$\mathrm{wt}(t)=3$}.
Denote by $E$ the exceptional curve of the blow up $f$.
Then
$$
K_{\widetilde{S}_{64}}\sim_{\mathbb{Q}} f^{*}\big(K_{S_{64}}\big)-\frac{14}{19}E;
$$
$$
K_{\widetilde{S}_{82}}\sim_{\mathbb{Q}} f^{*}\big(K_{S_{82}}\big)-\frac{20}{25}E.
$$
The surface $S$ has two singular points in $E$.
One  is a point of type $\frac{1}{2}(1,1)$
and the other  is  of type $\frac{1}{3}(1,1)$.
Denote the former by $O_2$ and the latter by $O_3$.
Observe $$E^2=-\frac{19}{6}  \ \ \  \mbox{ on  $\widetilde{S}_{64}$};$$
$$E^2=-\frac{25}{6}  \ \ \  \mbox{ on  $\widetilde{S}_{82}$}$$
and $E\cong\mathbb{P}^1$.

Let $\widetilde{C}_x$ be the proper transform of the curve $C_x$ on the surface $\widetilde{S}$.
Then
$$
\widetilde{C}_x\sim_{\mathbb{Q}} f^{*}\big(C_x\big)-cE,
$$
where $c=\frac{6}{19}$ for $S_{64}$ and  $c=\frac{6}{25}$ for $S_{82}$,
and the intersection $E\cap\widetilde{C}_x$ consists of a single point different from $O_2$ and $O_3$.
Note that the curves $E$ and $\widetilde{C}_x$ intersect transversally.

Denote by $\widetilde{\Delta}$ be the proper transform of the $\mathbb{Q}$-divisor $\Delta$ on the surface $\widetilde{S}$.
Then
$$
\widetilde{\Delta}\sim_{\mathbb{Q}} f^{*}\big(\Delta\big)-mE
$$
for some non-negative rational number $m$. To estimate it, observe
$$
0\leqslant\widetilde{C}_x\cdot\widetilde{\Delta}=\Big(f^{*}\big(C_x\big)-cE\Big)\cdot\Big(f^{*}\big(\Delta\big)-mE\Big)=C_x\cdot\Delta-m=C_x\cdot (D-aC_x)-m.
$$
This implies
 $m\leqslant\frac{18-14a}{285}$ for $S_{64}$ and $m\leqslant\frac{20-14a}{19\cdot 25}$ for $S_{82}$.

We finally suppose that the log pair $(S,\frac{19}{18}D)$ is not log canonical at~$O_z$.
Let $\lambda=\frac{19}{18}$. Then
$$
K_{\widetilde{S}}+\lambda a\widetilde{C}_x+\lambda\widetilde{\Delta}+\mu E\sim_{\mathbb{Q}}f^*\big(K_S+\lambda D\big),
$$
where $$\mu= \frac{6\lambda a}{19}+\lambda m+\frac{14}{19} \ \ \ \mbox{ for  $S_{64}$}; $$
$$\mu=\frac{6\lambda a}{25}+\lambda m+\frac{20}{25} \ \ \ \mbox{ for  $S_{82}$}. $$
Thus, the log pair
\begin{equation}
\label{equation:5-log-pull-back}
\Big(\widetilde{S},\lambda a\widetilde{C}_x+\lambda\widetilde{\Delta}+\mu E\Big)
\end{equation}
is not log canonical at some point $Q\in E$.

Using $m\leqslant\frac{18-14a}{15\cdot 19}$  for $S_{64}$, $m\leqslant\frac{20-14a}{19\cdot 25}$  for $S_{82}$ and $a\leqslant\frac{1}{2}$, we get
$$
\frac{6\lambda a}{19}+\lambda m+\frac{14}{19}\leqslant \frac{4\lambda a}{15}+\frac{6\lambda}{95}+\frac{14}{19}\leqslant\frac{56\lambda}{285}+\frac{14}{19}=\frac{2422}{2565}<1,
$$
$$
\frac{6\lambda a}{25}+\lambda m+\frac{20}{25}\leqslant \frac{4\lambda a}{19}+\frac{4\lambda}{95}+\frac{4}{5}\leqslant\frac{14\lambda}{95}+\frac{4}{5}=\frac{817}{855}<1.
$$

Since
\[E\cdot\widetilde{\Delta}=E\cdot (f^{*}\big(\Delta\big)-mE)=-mE^2=\left\{%
\aligned
&\frac{19m}{6}\leqslant  \frac{9-7a}{45} \leqslant \frac{6}{19}\ \ \ \text{ on $\widetilde{S}_{64}$},\\%
&\frac{25m}{6} \leqslant  \frac{20-14a}{6\cdot 19} \leqslant \frac{6}{19} \ \ \  \text{ on $\widetilde{S}_{82}$}.\\%
\endaligned\right.%
\]
Lemmas~\ref{lemma:inversion-of-adjunction} and~\ref{lemma:inversion-of-adjunction-orbifold}  imply that $Q$ must be the intersection point of $E$ and $\widetilde{C}_x$.
It then follows from  Lemma~\ref{lemma:inversion-of-adjunction} that
\[\frac{18}{19}=\frac{1}{\lambda}<\Big(a\widetilde{C}_x+\widetilde{\Delta}\Big)\cdot E= a+\widetilde{\Delta}\cdot E =\left\{%
\aligned
& a+\frac{19m}{6}\leqslant  a+ \frac{9-7a}{45} \ \ \ \text{ on $\widetilde{S}_{64}$},\\%
& a+\frac{25m}{6} \leqslant  a+ \frac{20-14a}{6\cdot 19} \ \ \  \text{ on $\widetilde{S}_{82}$}.\\%
\endaligned\right.%
\]
This contradicts our assumption $a\leqslant \frac{1}{2}$. The obtained contradiction completes the proof.
\end{proof}

\begin{corollary}
\label{corollary:5}
One has $\delta(S)\geqslant\frac{19}{18}$.
\end{corollary}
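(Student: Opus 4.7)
The plan is to deduce Corollary~\ref{corollary:5} from Proposition~\ref{proposition:5} in exactly the same way that Corollary~\ref{corollary:1} was deduced from Proposition~\ref{proposition:1}, using Corollary~\ref{corollary:a-simple} as the bridge to bound the coefficient $a$ of $C_x$ in a $k$-basis type divisor.

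First, let $D$ be a $\mathbb{Q}$-divisor of $k$-basis type on $S$ with $k\gg 0$, and write $D = aC_x + \Delta$, where $a \geqslant 0$ and $\Delta$ is an effective $\mathbb{Q}$-divisor whose support does not contain $C_x$. I would next compute the rational number $\mu$ such that $C_x \sim_{\mathbb{Q}} \mu(-K_S)$ in each of the two subcases. For $S_{64}$ in $\mathbb{P}(7,15,19,32)$ one has $-K_{S_{64}} \sim_{\mathbb{Q}} \mathcal{O}(9)|_{S_{64}}$ and $C_x \sim_{\mathbb{Q}} \mathcal{O}(7)|_{S_{64}}$, so $\mu = \tfrac{7}{9}$; for $S_{82}$ in $\mathbb{P}(7,19,25,41)$ one has $-K_{S_{82}} \sim_{\mathbb{Q}} \mathcal{O}(10)|_{S_{82}}$ and $C_x \sim_{\mathbb{Q}} \mathcal{O}(7)|_{S_{82}}$, so $\mu = \tfrac{7}{10}$.

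Applying Corollary~\ref{corollary:a-simple} then yields
\[
a \leqslant \frac{1}{3\mu} + \epsilon_k,
\]
with $\epsilon_k \to 0$ as $k\to\infty$. In the two cases this gives $\tfrac{1}{3\mu} = \tfrac{3}{7}$ and $\tfrac{1}{3\mu} = \tfrac{10}{21}$, both of which are strictly less than $\tfrac{1}{2}$. Hence for all sufficiently large $k$ one has $a \leqslant \tfrac{1}{2}$, which is precisely the hypothesis of Proposition~\ref{proposition:5}. That proposition immediately implies that the log pair $(S, \tfrac{19}{18} D)$ is log canonical for every $\mathbb{Q}$-divisor $D$ of $k$-basis type with $k \gg 0$. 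By the definition of the $\delta$-invariant (see \cite{FujitaOdaka}), this is exactly the statement $\delta(S) \geqslant \tfrac{19}{18}$.

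There is no substantive obstacle here: all the real work has already been done inside Proposition~\ref{proposition:5}, and the corollary is a routine packaging step once the two numerical checks $\tfrac{3}{7} < \tfrac{1}{2}$ and $\tfrac{10}{21} < \tfrac{1}{2}$ are observed. The only point requiring any attention is to ensure that $k$ is chosen large enough so that the error term $\epsilon_k$ from Corollary~\ref{corollary:a-simple} does not push $a$ above the threshold $\tfrac{1}{2}$; since $\tfrac{10}{21}$ is the tighter of the two bounds and leaves a positive gap to $\tfrac{1}{2}$, this is automatic for $k \gg 0$.
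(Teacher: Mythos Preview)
Your proposal is correct and follows exactly the approach the paper intends: its proof of Corollary~\ref{corollary:5} is literally ``See the proof of Corollary~\ref{corollary:1}'', and you have faithfully reproduced that argument with the appropriate numerics for $S_{64}$ and $S_{82}$. The computations of $\mu$ and the resulting bounds $\tfrac{3}{7},\tfrac{10}{21}<\tfrac{1}{2}$ are correct, so Proposition~\ref{proposition:5} applies and the conclusion follows.
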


\begin{proof}
See the proof of Corollary~\ref{corollary:1}.
\end{proof}

\section{Case C}
\label{section:case-c}

In this section, we consider the following three types of quasismooth hypersurfaces:
\begin{itemize}
\item $S_{45}$ : a quasismooth hypersurface in $\mathbb{P}(7,10,15,19)$ of degree $45$;
\item $S_{81}$ : a quasismooth hypersurface in $\mathbb{P}(7,18,27,37)$ of degree $81$;
\item $S_{117}$ : a quasismooth hypersurface in $\mathbb{P}(7,26,39,55)$ of degree $117$.
\end{itemize}

As in the previous sections, we use $S$ for all the surfaces $S_{45}$, $S_{81}$, and $S_{117}$ if properties or conditions are satisfies by all the surfaces.

By appropriate coordinate changes, we may assume that
the surface $S_{45}$ is defined  by the equation
$$
z^{3}-y^{3}z+xt^{2}+x^{5}y=0
$$
in $\mathbb{P}(7,10,15,19)$,
the surface $S_{81}$ by
$$
z^{3}-y^{3}z+xt^{2}+x^{9}y=0
$$
in $\mathbb{P}(7,18,27,37)$,
and the surface $S_{117}$ by
$$
z^{3}-y^{3}z+xt^{2}+x^{13}y=0
$$
in $\mathbb{P}(7,26,39,55)$.

The surface  $S$ is singular at the points
$$
O_x=[1:0:0:0], O_y=[0:1:0:0], O_t=[0:0:0:1], Q=[0:1:1:0],
$$
and is smooth away from them.
Moreover, the surface $S_{45}$ (resp. $S_{81}$ and $S_{117}$) has quotient singularity of types $\frac{1}{7}(1,5)$,
$\frac{1}{10}(7,9)$, $\frac{1}{19}(2,3)$, $\frac{1}{5}(1,2)$ (resp. $\frac{1}{7}(3,1)$,
$\frac{1}{18}(7,1)$, $\frac{1}{37}(2,3)$, $\frac{1}{9}(7,1)$ and~\mbox{$\frac{1}{7}(2,3)$},
$\frac{1}{26}(7,3)$, $\frac{1}{55}(2,3)$, $\frac{1}{13}(7,3)$)
at the points  $O_x$, $O_y$, $O_t$, $Q$, respectively.

Let $C_x$ be the curve in $S$ that is cut out by $x=0$.
Then
$$
C_x=L_{xz}+R_x,
$$
where $L_{xz}$ is the curve  given by $x=z=0$ and $R_x$  by $x=z^2-y^3=0$ in the ambient weighted projective space.
These two curves $L_{xz}$ and $R_x$ meets each other at the point $O_t$. Also, we have
\begin{equation}
\label{equation:3-intersection}
\begin{split}
&L_{xz}^2=-\frac{23}{10\cdot 19},\ \ R_x^2=-\frac{8}{5\cdot 19},\ \ L_{xz}\cdot R_{x}=\frac{3}{19} \ \ \ \mbox{ on $S_{45}$;}\\
&L_{xz}^2=-\frac{47}{18\cdot 37},\ \  R_x^2=-\frac{20}{9\cdot 37},\ \
 L_{xz}\cdot R_{x}=\frac{3}{37} \ \ \ \mbox{ on $S_{81}$;}\\
& L_{xz}^2=-\frac{71}{26\cdot 55},\ \ R_x^2=-\frac{32}{13\cdot 55},
\ \ L_{xz}\cdot R_{x}=\frac{3}{55} \ \ \ \mbox{ on $S_{117}$.}
\end{split}
\end{equation}
Note also that the curve $R_x$ is singular at the point $O_t$.

Let $C_y$ be the curve in $S$ cut out by $y=0$.
Then $C_y$ is irreducible and
$$
\frac{35}{54}=\mathrm{lct}\left(S_{45},\frac{6}{7}C_x\right)<\mathrm{lct}\left(S_{45},\frac{6}{10}C_y\right)=\frac{25}{18};%
$$
$$
\frac{35}{72}=\mathrm{lct}\left(S_{81}, \frac{8}{7}C_x\right)
<\mathrm{lct}\left(S_{81}, \frac{8}{18}C_y\right)=\frac{15}{8};%
$$
$$
\frac{7}{18}=\mathrm{lct}\left(S_{117},\frac{10}{7}C_x\right)<
\mathrm{lct}\left(S_{117},\frac{10}{26}C_y\right)=\frac{13}{6}.%
$$
In fact, in these three cases $\alpha(S)$ is given by the numbers  $\frac{35}{54}$, $\frac{35}{72}$, and $\frac{7}{18}$ on the left-hand sides by \cite[Theorem~1.10]{CheltsovParkShramovJGA}.

To estimate $\delta(S)$, we
fix an effective $\mathbb{Q}$-divisor $D$ on the surface $S$ such that
$$
D\sim_{\mathbb{Q}} -K_{S}
$$
and write $D=aL_{xz}+bR_x+\Delta$, where $a$ and $b$ are non-negative numbers, and $\Delta$ is an effective $\mathbb{Q}$-divisor on the surface $S$ whose support does not contain the curves $L_{xz}$ and~$R_x$.

\begin{lemma}\label{lemma:a-b-bound}
If the $\mathbb{Q}$-divisor $D$ is of $k$-basis type with $k\gg 0$, then
\[a\leqslant \left\{%
\aligned
&\frac{2}{5}\\%
&\frac{1}{2}\\%
&\frac{11}{20}\\%
\endaligned\right\}, \ \ \ %
b\leqslant \left\{%
\aligned
&\frac{1}{3}\ \ \ \text{ on $S_{45}$}\\%
&\frac{1}{5}  \ \ \  \text{ on $S_{81}$}\\%
&\frac{12}{25}  \ \ \  \text{ on $S_{117}$}\\%
\endaligned\right\}.%
\]

\end{lemma}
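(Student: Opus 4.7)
The plan is to apply Theorem~\ref{theorem:Fujita} to the divisor $D$ twice: once with $C=L_{xz}$ to bound $a$, and once with $C=R_x$ to bound $b$. The key input is the identity
$$-K_S\sim_{\mathbb{Q}}\frac{I}{7}\bigl(L_{xz}+R_x\bigr),$$
where $I=6,\,8,\,10$ according as $S=S_{45},\,S_{81},\,S_{117}$. Note that neither $L_{xz}$ nor $R_x$ is proportional to $-K_S$ individually, so Corollary~\ref{corollary:a-simple} cannot be applied directly, and one has to compute the volume integral by hand.

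To bound $a$, the first step is to work out the Zariski decomposition of
$$-K_S-xL_{xz}\sim_{\mathbb{Q}}\Bigl(\tfrac{I}{7}-x\Bigr)L_{xz}+\tfrac{I}{7}R_x$$
as $x$ varies from $0$ to the pseudoeffective threshold $\tau$. The intersection table \eqref{equation:3-intersection} makes $L_{xz}$ and $R_x$ the only candidates for the negative part. Computing $(-K_S-xL_{xz})\cdot L_{xz}$ and $(-K_S-xL_{xz})\cdot R_x$ pins down the value $x_0$ at which $R_x$ first enters the negative part. For $x\in[0,x_0]$ the divisor is already nef, and for $x\in(x_0,\tau]$ one solves the single linear equation $P\cdot R_x=0$ to write the positive part $P=P(x)$ as an explicit $\mathbb{Q}$-linear combination of $L_{xz}$ and $R_x$; the threshold $\tau$ is then the value of $x$ at which the coefficient of $L_{xz}$ in $P$ vanishes. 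This makes $\mathrm{vol}(-K_S-xL_{xz})=P(x)^2$ an explicit piecewise quadratic in $x$.

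The next step is to evaluate
$$\frac{1}{(-K_S)^2}\int_0^{\tau}\mathrm{vol}\bigl(-K_S-xL_{xz}\bigr)\,dx$$
as the integral of this piecewise quadratic, and to verify that the result equals exactly $\frac{2}{5}$, $\frac{1}{2}$, $\frac{11}{20}$ for $S_{45}$, $S_{81}$, $S_{117}$, respectively. Theorem~\ref{theorem:Fujita} then yields the stated upper bound on $a$, since the error term $\epsilon_k$ vanishes as $k\to\infty$ and $D$ is assumed to be of $k$-basis type with $k\gg 0$. The bound on $b$ is proved by the completely parallel computation with the roles of $L_{xz}$ and $R_x$ exchanged, and should reproduce $\frac{1}{3}$, $\frac{1}{5}$, $\frac{12}{25}$.

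The main obstacle is ensuring that no additional irreducible curve on $S$ — for instance an exceptional divisor over one of the cyclic quotient singularities at $O_x,\,O_y,\,O_t,\,Q$, or another component of some coordinate hypersurface — enters the negative part of the Zariski decomposition before $\tau$ is reached. Once that is excluded, what remains is a careful piecewise integration using only the data in \eqref{equation:3-intersection} and the value of $(-K_S)^2$; this must be carried out separately for each of the three surfaces since the weights, the invariant $I$, and the intersection numbers $L_{xz}^2$, $R_x^2$, $L_{xz}\cdot R_x$ all differ, but the structure of the argument is the same in each case.
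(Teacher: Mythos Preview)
Your plan is exactly the paper's approach: apply Theorem~\ref{theorem:Fujita} with $C=L_{xz}$ (respectively $C=R_x$), compute the Zariski decomposition of $-K_S-\lambda L_{xz}$ (respectively $-K_S-\lambda R_x$) in terms of $L_{xz}$ and $R_x$ using the table~\eqref{equation:3-intersection}, and integrate the resulting piecewise quadratic volume function.

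Two small corrections. First, the integrals do \emph{not} come out equal to the stated bounds: the paper obtains $\tfrac{118}{315},\,\tfrac{760}{1701},\,\tfrac{8780}{17199}$ for $a$ and similarly strict values for $b$, each strictly below the numbers in the lemma; that strict gap is precisely what absorbs the $\epsilon_k$ for $k\gg 0$, so ``equals exactly'' would in fact break the argument. Second, your worry about extra curves entering the negative part is unnecessary: for any irreducible curve $C$ on $S$ other than $L_{xz}$ and $R_x$ one has $C\cdot L_{xz}\geqslant 0$ and $C\cdot R_x\geqslant 0$, so a non-negative combination $\alpha L_{xz}+\beta R_x$ is nef as soon as it meets $L_{xz}$ and $R_x$ non-negatively; and since the decomposition is taken on $S$ itself, there are no exceptional divisors to consider.
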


\begin{proof}
Suppose that $D$ is of $k$-basis type  with $k\gg 0$.
Theorem~\ref{theorem:Fujita} implies that
$$
a\leqslant\frac{1}{(-K_S)^2}\int_0^{\infty}\mathrm{vol}\big(-K_S-\lambda L_{xz}\big)d\lambda+\epsilon_k,
$$
where $\epsilon_k$ is a small constant depending on $k$ such that $\epsilon_k\to 0$ as $k\to \infty$.
Since
$$
-K_S-\lambda L_{xz}\sim_{\mathbb{Q}}
\left\{%
\aligned
&\left(\frac{6}{7}-\lambda\right)L_{xz}+\frac{6}{7}R_x\ \ \ \text{ on $S_{45}$}\\%
&\left(\frac{8}{7}-\lambda\right)L_{xz}+\frac{8}{7}R_x \ \ \  \text{ on $S_{81}$}\\%
&\left(\frac{10}{7}-\lambda\right)L_{xz}+
\frac{10}{7}R_x
 \ \ \  \text{ on $S_{117}$}\\%
\endaligned\right.
$$
and $R_x^2<0$, we have $\mathrm{vol}(-K_S-\lambda L_{xz})=0$ for $\lambda\geqslant\frac{6}{7}$ on $S_{45}$,  $\lambda\geqslant\frac{8}{7}$ on $S_{81}$ and $\lambda\geqslant\frac{10}{7}$ on $S_{117}$.
Similarly, using \eqref{equation:3-intersection}, we see that
$$
\left(-K_S-\lambda L_{xz}\right)\cdot R_x=
\left\{%
\aligned
&\left(\left(\frac{6}{7}-\lambda\right)L_{xz}+\frac{6}{7}R_x\right)\cdot R_x=\frac{6-15\lambda}{19\cdot 5}\ \ \ \text{ on $S_{45}$}\\%
&\left(\left(\frac{8}{7}-\lambda\right)L_{xz}+\frac{8}{7}R_x\right)\cdot R_x=\frac{8-27\lambda}{37\cdot 9} \ \ \  \text{ on $S_{81}$}\\%
&\left(\left(\frac{10}{7}-\lambda\right)L_{xz}+\frac{10}{7}R_x\right)\cdot R_x=
\frac{10-39\lambda}{13\cdot 55}
 \ \ \  \text{ on $S_{117}$.}\\%
\endaligned\right.
$$
This shows that the divisor $-K_S-\lambda L_{xz}$  is nef for $\lambda\leqslant\frac{2}{5}$ on $S_{45}$,  $\lambda\leqslant\frac{8}{27}$ on $S_{81}$ and $\lambda\leqslant\frac{10}{39}$ on~$S_{117}$.
Thus, we have
$$
\mathrm{vol}\big(-K_S-\lambda L_{xz}\big)=\big(-K_S-\lambda L_{xz}\big)^2=\left\{%
\aligned
&\frac{54}{665}-\frac{6\lambda}{95}-\frac{23\lambda^2}{190}
\ \ \ \text{ for $\lambda\leqslant\frac{2}{5}$ on $S_{45}$}\\%
&\frac{32}{777}-\frac{8\lambda}{333}-\frac{47\lambda^2}{666} \ \ \  \text{
for $\lambda\leqslant\frac{8}{27}$  on $S_{81}$}\\%
&
\frac{200}{7007}-\frac{12\lambda}{1001}-\frac{36}{715}\lambda^2
 \ \ \  \text{
for $\lambda\leqslant\frac{10}{39}$
 on $S_{117}$.}\\%
\endaligned\right.
$$
To compute $\mathrm{vol}(-K_S-\lambda L_{xz})$ for $\frac{2}{5}<\lambda<\frac{6}{7}$ on $S_{45}$,  $\frac{8}{27}<\lambda<\frac{8}{7}$ on $S_{81}$ and $\frac{10}{39}<\lambda<\frac{10}{7}$  on~$S_{117}$,
 we let
$$
N=\left\{%
\aligned
&\left(\frac{6}{7}-\lambda\right)L_{xz}+\left(\frac{6}{7}-\frac{15\lambda-6}{8}\right)R_x
\ \ \ \text{ for $S_{45}$}\\%
&\left(\frac{8}{7}-\lambda\right)L_{xz}+\left(\frac{8}{7}-\frac{27\lambda-8}{20}\right)R_x \ \ \  \text{
for  $S_{81}$}\\%
&
\left(\frac{10}{7}-\lambda\right)L_{xz}+\left(\frac{10}{7}-\frac{39\lambda-10}{32}\right)R_x
 \ \ \  \text{
for  $S_{117}$.}\\%
\endaligned\right.
$$
Then, using \eqref{equation:3-intersection} again, we  see that $N\cdot R_x=0$ and $N\cdot  L_{xz}\geqslant 0$.
Thus, we conclude that the divisor $N$ is nef on the respective interval for $\lambda$.
This shows that
$$
-K_S-\lambda L_{xz}\sim_{\mathbb{Q}}
\left\{%
\aligned
&N+\frac{15\lambda-6}{8}R_x
\ \ \ \text{ on $S_{45}$}\\%
&N+\frac{27\lambda-8}{20}R_x \ \ \  \text{
on  $S_{81}$}\\%
&
N+\frac{39\lambda-10}{32}R_x
 \ \ \  \text{
on $S_{117}$}\\%
\endaligned\right.
$$
is the Zariski decomposition of the divisor $-K_S-\lambda L_{xz}$.
Hence, we have
$$
\mathrm{vol}\big(-K_S-\lambda L_{xz}\big)=N^2=
\left\{%
\aligned
&\frac{1}{280}(6-7\lambda)^2
\ \ \ \text{ on $S_{45}$}\\%
&\frac{1}{1260}(8-7\lambda)^2 \ \ \  \text{
on  $S_{81}$}\\%
&
\frac{369}{1121120}(10-7\lambda)^2\ \ \  \text{
on $S_{117}$}\\%
\endaligned\right.
$$
by \eqref{equation:volume}. Thus, integrating, we get
$$
a\leqslant\frac{1}{(-K_S)^2}\int_0^{\infty}\mathrm{vol}\big(-K_S-\lambda L_{xz}\big)d\lambda+\epsilon_k=\left\{%
\aligned
&\frac{118}{315}+\epsilon_k
\ \ \ \text{ for $S_{45}$}\\%
&\frac{760}{1701}+\epsilon_k \ \ \  \text{
for  $S_{81}$}\\%
&
\frac{8780}{17199}+\epsilon_k \ \ \  \text{
for $S_{117}$.}\\%
\endaligned\right.
$$
This gives us the asserted bounds for $a$.

Meanwhile, we have
$$
\mathrm{vol}\big(-K_S-\lambda R_x\big)=\big(-K_S-\lambda R_x\big)^2=
\left\{%
\aligned
&\frac{54}{665}-\frac{12\lambda}{95}-\frac{8\lambda^2}{95}
\ \ \ \text{ for $0\leqslant\lambda\leqslant\frac{1}{5}$ on $S_{45}$}\\%
&\frac{32\cdot 21}{9\cdot 37\cdot 49}-\frac{16\lambda}{9\cdot 37}-\frac{20\lambda^2}{9\cdot 37} \ \ \  \text{
for $0\leqslant\lambda\leqslant\frac{4}{27}$  on $S_{81}$}\\%
&
\frac{30}{1001}-\frac{4\lambda}{143}-\frac{32\lambda^2}{715}
 \ \ \  \text{
for $0\leqslant\lambda\leqslant\frac{5}{39}$
 on $S_{117}$.}\\%
\endaligned\right.
$$
since the divisor $-K_S-\lambda R_x$ is nef for the values $\lambda$ in the respective interval.
The Zariski decomposition of the divisor $-K_S-\lambda R_x$ is given by
$$
\left\{%
\aligned
&\underbrace{\Big(\frac{6}{7}-\frac{30\lambda-6}{23}\Big)L_{xz}+\Big(\frac{6}{7}-\lambda\Big)R_x}_{\text{nef $\mathbb{R}$-divisor}}+\frac{30\lambda-6}{23}L_{xz}\ \ \ \text{ for $\frac{1}{5}<\lambda\leqslant\frac{6}{7}$ on $S_{45}$}\\%
&\underbrace{\Big(\frac{8}{7}-\frac{54\lambda-8}{47}\Big)L_{xz}+\Big(\frac{8}{7}-\lambda\Big)R_x}_{\text{nef $\mathbb{R}$-divisor}}+\frac{54\lambda-8}{47}L_{xz}
 \ \ \  \text{
for $\frac{4}{27}<\lambda\leqslant\frac{8}{7}$  on $S_{81}$}\\%
&
 \underbrace{\Big(\frac{10}{7}-
\frac{78\lambda-10}{71}\Big)L_{xz}+\Big(\frac{10}{7}-\lambda\Big)R_x}_{\text{nef $\mathbb{R}$-divisor}}+\frac{78\lambda-10}{71}L_{xz}
 \ \ \  \text{
for $\frac{5}{39}<\lambda\leqslant\frac{10}{7}$
 on $S_{117}$,}\\%
\endaligned\right.
$$
so that we could obtain
$$
\mathrm{vol}\big(-K_S-\lambda R_x\big)=
\left\{%
\aligned
&\Bigg(\Big(\frac{6}{7}-\frac{30\lambda-6}{23}\Big)L_{xz}+\Big(\frac{6}{7}-\lambda\Big)R_x\Bigg)^2=\frac{2}{5\cdot 7\cdot 23}(6-7\lambda)^2\\
&\Bigg(\Big(\frac{8}{7}-\frac{54\lambda-8}{47}\Big)L_{xz}+\Big(\frac{8}{7}-\lambda\Big)R_x\Bigg)^2=\frac{2}{7\cdot 9\cdot 47}(8-7\lambda)^2\\
&
\Bigg(\Big(\frac{10}{7}-\frac{78\lambda-10}{71}\Big)L_{xz}+
\Big(\frac{10}{7}-\lambda\Big)R_x\Bigg)^2=\frac{2}{7\cdot 13\cdot 71}(10-7\lambda)^2.\\
\endaligned\right.
$$
Finally,  $\mathrm{vol}(-K_S-\lambda R_x)=0$ for $\lambda>\frac{6}{7}$ on $S_{45}$,  for $\lambda>\frac{8}{7}$ on $S_{81}$, and for $\lambda>\frac{10}{7}$ on $S_{117}$ since~\mbox{$-K_S-\lambda R_x$} is not pseudoeffective for these values $\lambda$.
Thus, by Theorem~\ref{theorem:Fujita}, we have
$$
b\leqslant\frac{1}{(-K_S)^2}\int_0^{\infty}\mathrm{vol}\big(-K_S-\lambda R_x\big)d\lambda+\varepsilon_k=
\left\{%
\aligned
&\frac{97}{315}+\varepsilon_k
\ \ \ \text{ for $S_{45}$}\\%
&\frac{10709068}{58281363}+\varepsilon_k
 \ \ \  \text{
for  $S_{81}$}\\%
&
\frac{1205}{2457}+\varepsilon_k\ \ \  \text{
for $S_{117}$.}\\%
\endaligned\right.
$$
This yields the required bounds for $b$.
\end{proof}

Now we prove the main assertion in this section.

\begin{proposition}
\label{proposition:3}
If $a$ and $b$ satisfies the bounds in Lemma~\ref{lemma:a-b-bound}
then the log pair $(S,\frac{65}{64}D)$ is log canonical.
\end{proposition}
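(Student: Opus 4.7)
The plan is to argue by contradiction, assuming $(S,\tfrac{65}{64}D)$ fails to be log canonical at a point $P\in S$ and eliminating every possible position of $P$. The overall architecture follows Propositions~\ref{proposition:1} and~\ref{proposition:5}, but the presence of \emph{two} distinguished curves $L_{xz}$ and $R_x$---with $R_x$ singular at $O_t$---makes the local analysis at $O_t$ the nontrivial point.

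For $P\notin C_x\cup C_y$, I would build, as in Lemma~\ref{lemma:1-smooth-points} and the pencil part of the proof of Proposition~\ref{proposition:5}, a suitable pencil $\mathcal{P}$ of curves through $P$ so that a general member $C$ does not share a component with $\mathrm{Supp}(D)$ and satisfies $C\cdot D\leqslant 64/65$; Corollary~\ref{corollary:mult-1} then gives log canonicity. For $P\in C_y\setminus C_x$ away from the cyclic quotient points of $S$, I would combine the log canonical thresholds $\mathrm{lct}(S_{45},\tfrac{6}{10}C_y)$, $\mathrm{lct}(S_{81},\tfrac{8}{18}C_y)$, $\mathrm{lct}(S_{117},\tfrac{10}{26}C_y)$ recalled before the proposition with \cite[Remark~2.22]{CheltsovShramovUMN} to replace $D$ by $D'\sim_{\mathbb{Q}}-K_S$ whose support avoids $C_y$, and conclude via Lemma~\ref{lemma:mult-1-n} using the small value of $C_y\cdot D'=(-K_S)\cdot C_y$. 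For $P\in (L_{xz}\cup R_x)\setminus\{O_t\}$ (and away from the other quotient singularities of $S$ on $C_x$), I would apply Lemma~\ref{lemma:inversion-of-adjunction} or~\ref{lemma:inversion-of-adjunction-orbifold} along the curve through $P$: writing $D=aL_{xz}+bR_x+\Delta$, the intersections in~\eqref{equation:3-intersection} convert the bounds on $a$ and $b$ from Lemma~\ref{lemma:a-b-bound} into upper estimates for $(L_{xz}\cdot(bR_x+\Delta))_P$ and $(R_x\cdot(aL_{xz}+\Delta))_P$ that should stay below the inversion-of-adjunction threshold $\tfrac{64}{65 n_P}$ in every case.

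The main obstacle is $P=O_t$, where $S$ has a singularity of type $\frac{1}{19}(2,3)$, $\frac{1}{37}(2,3)$, or $\frac{1}{55}(2,3)$, both $L_{xz}$ and $R_x$ pass through $O_t$, and $R_x$ is singular there. Following the template of Lemma~\ref{lemma:1-P-O-t} and the final step in the proof of Proposition~\ref{proposition:5}, I would work in the chart $z=1$ with local weighted coordinates $(y,t)$ of weights $(2,3)$ and consider the weighted blow-up $f\colon\widetilde{S}\to S$ with exceptional curve $E\cong\mathbb{P}^1$ carrying two cyclic quotient points of $\widetilde{S}$. After computing the discrepancy of $E$ and the proper transforms $\widetilde{L}_{xz}$, $\widetilde{R}_x$, $\widetilde{\Delta}$, and bounding the multiplicity $m$ of $\widetilde{\Delta}$ along $E$ from $0\leqslant \widetilde{R}_x\cdot\widetilde{\Delta}$ and $0\leqslant\widetilde{L}_{xz}\cdot\widetilde{\Delta}$, I would feed the bounds from Lemma~\ref{lemma:a-b-bound} into Proposition~\ref{proposition:lc-invariant-under-blowup} and reduce to showing that the resulting log pair on $\widetilde{S}$ is log canonical along~$E$. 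The hard part is that, because $R_x$ is already singular at $O_t$, a single weighted blow-up is unlikely to make $\widetilde{R}_x+\widetilde{L}_{xz}+E$ simple normal crossing near the two orbifold points on $E$; I expect to need either a second weighted blow-up centered at the worse of these two points or a refined inversion-of-adjunction argument tailored to the orbifold structure there, and this is precisely the step where the numerical value $\tfrac{65}{64}$ is expected to bind.
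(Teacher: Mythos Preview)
Your overall architecture matches the paper's proof. The steps away from $O_t$ are handled exactly as you outline: a pencil argument for $P\notin C_x\cup C_y$, the replacement trick via \cite[Remark~2.22]{CheltsovShramovUMN} along $C_y$, and inversion of adjunction along $L_{xz}$ or $R_x$ for points of $C_x\setminus\{O_t\}$.

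At $O_t$ there are two corrections. First, a slip: you want the chart $t=1$ with local weighted coordinates $(y,z)$ of weights $(2,3)$, not the chart $z=1$ with coordinates $(y,t)$; the point $O_t$ has $t\neq 0$, and after eliminating $x$ via the surface equation the local model is $\mathbb{C}^2_{y,z}/\mathbb{Z}_c$ with $\mathrm{wt}(y)=2$, $\mathrm{wt}(z)=3$.

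Second, and more substantively, you overestimate the difficulty: a \emph{single} weighted blow-up with these weights suffices, and no second blow-up is needed. The point you are missing is that $R_x$ is locally the cusp $z^2-y^3=0$, and the $(2,3)$-weighted blow-up is precisely the one that resolves such a cusp; the proper transform $\widetilde{R}_x$ meets $E$ transversally at a single \emph{smooth} point of $\widetilde{S}$, away from the two orbifold points $O_2$, $O_3$ on $E$, while $\widetilde{L}_{xz}$ meets $E$ only at $O_2$. With this geometry in hand the analysis on $\widetilde{S}$ is routine: bound $m$ from $\widetilde{R}_x\cdot\widetilde{\Delta}\geqslant 0$ alone (the inequality from $\widetilde{L}_{xz}$ is not needed), check that the coefficient of $E$ in the log pull-back is at most $1$, and then apply Lemma~\ref{lemma:inversion-of-adjunction} or~\ref{lemma:inversion-of-adjunction-orbifold} to $E$ at the three candidate locations $E\cap\widetilde{R}_x$, $O_3$, and $O_2$. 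The bounds on $a,b$ from Lemma~\ref{lemma:a-b-bound} give $\lambda\,\widetilde{\Delta}\cdot E\leqslant\frac{1}{3}$ and $\lambda\,(a\widetilde{L}_{xz}+\widetilde{\Delta})\cdot E\leqslant\frac{1}{2}$, which yields the contradiction directly.
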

\begin{proof}

We suppose that $a$ and $b$ satisfies the bounds in Lemma~\ref{lemma:a-b-bound}.

We fist claim that the log pair $(S,\frac{65}{64}D)$ is log canonical outside of $C_x$ and $C_y$.
This immediately follows from the same argument as in the beginning of the proof of Proposition~\ref{proposition:5} with the pencil
$\mathcal{P}$ given by
$$
\nu\big(x^{10}+c_1y^7\big)+\mu y^4\big(z^2+c_2y^3\big)=0 \ \ \ \mbox{ on $S_{45}$,}
$$
$$
\nu\big(x^{18}+c_1y^7\big)+\mu y^4\big(z^2+c_2y^3\big)=0  \ \ \ \mbox{ on $S_{81}$,}
$$
$$
\nu\big(x^{26}+c_1y^7\big)+\mu y^4\big(z^2+c_2y^3\big)=0  \ \ \ \mbox{ on $S_{117}$,}
$$
where $c_1$ and $c_2$ are appropriate constants, for $[\nu:\mu]\in\mathbb{P}^1$.
For a general member $C$ in $\mathcal{P}$ we obtain
$$C\cdot D\leqslant\frac{64}{65},$$
which verifies the claim. Notice that the surface $S$ is smooth outside  $C_x$ and  $C_y$.

We now consider  a point $P$ on $C_y$ different from $O_t$. Suppose that the log pair $(S,\frac{65}{64}D)$ is not log canonical at the point $P$.
Recall that $(S,\frac{65e}{64}C_y)$ is log canonical, where $e$ is the positive rational number such that $-K_S\sim_\mathbb{Q}eC_y$, and that the curve $C_y$ is irreducible.
Thus, it follows from \cite[Remark~2.22]{CheltsovShramovUMN} that there exists
an effective $\mathbb{Q}$-divisor $D^\prime$ on the surface $S$ such that
$$
D^\prime\sim_{\mathbb{Q}} -K_{S},
$$
the log pair $(S,\frac{65}{64}D^\prime)$ is not log canonical at the point $P$,
and the support of the divisor $D^\prime$ does not contain the curve $C_y$.
Observe that $$C_y\cdot D^\prime\leqslant \frac{64}{7\cdot65}.$$
This implies that
the log pair $(S,\frac{65}{64}D^\prime)$ is log canonical at the point $P$.  This contradiction shows that the log pair $(S,\frac{65}{64}D)$ is log canonical outside $C_x$.

Let $P$ be a point on $C_x$ other than $O_t$.
We have two cases for the location of $P$, i.e., when $P$ lies on $L_{xz}$ and when it lies on $R_x$. Note that we always have $\frac{65a}{64}<1$ and $\frac{65b}{64}<1$.

We first consider the case where $P$ belongs to $L_{xz}$. Then the log pair $(S,L_{xz}+\frac{65b}{64}R_x+\frac{65}{64}\Delta)$ is log canonical at $P$. Indeed,
$$
\left(bR_x+\Delta\right)\cdot L_{xz}=\big(D-aL_{xz}\big)\cdot L_{xz}=
\left\{%
\aligned
&\frac{6+23a}{190}\leqslant\frac{64}{65\cdot 10}
\ \ \ \text{ for $S_{45}$}\\%
&\frac{8+47a}{37\cdot 18}\leqslant\frac{64}{65\cdot 18}\ \ \  \text{
for  $S_{81}$}\\%
&
\frac{10+71a}{55\cdot 26}\leqslant \frac{64}{65\cdot 26}\ \ \  \text{
for $S_{117}$.}\\%
\endaligned\right.
$$
Lemmas~\ref{lemma:inversion-of-adjunction} or~\ref{lemma:inversion-of-adjunction-orbifold}  imply that $(S,\frac{65}{64}D)$ is log canonical at the point $P$.
If the point $P$ must lie on $R_x$, then we consider
$$
\left(aL_{xz}+\Delta\right)\cdot R_{x}=\big(D-bR_{x}\big)\cdot R_{x}=
\left\{%
\aligned
&\frac{3+8b}{95}\leqslant\frac{64}{65\cdot 5}
\ \ \ \text{ for $S_{45}$}\\%
&\frac{8+20b}{9\cdot 37}\leqslant \frac{64}{65\cdot 9}\ \ \  \text{
for  $S_{81}$}\\%
&
\frac{10+32b}{13\cdot 55}
\leqslant \frac{64}{65\cdot 13}
\ \ \  \text{
for $S_{117}$.}\\%
\endaligned\right.
$$
Lemmas~\ref{lemma:inversion-of-adjunction} or~\ref{lemma:inversion-of-adjunction-orbifold}  then show that $(S,\frac{65}{64}D)$ is log canonical at the point $P$.

Now it is enough to show that $(S,\frac{65}{64}D)$ is log canonical at $O_t$.

Recall that $S_{45}$ (resp. $S_{81}$ snd $S_{117}$) has singularity of type $\frac{1}{19}(2,3)$ (resp. $\frac{1}{37}(2,3)$ and $\frac{1}{55}(2,3)$) at the point $O_t$.
In the chart given by $t=1$,
the surface $S_{45}$ is given by
$$
z^{3}-y^{3}z+x+x^{5}y=0,
$$
the surface $S_{81}$ by
$$
z^{3}-y^{3}z+x+x^{9}y=0,
$$
and the surface $S_{117}$ by
$$
z^{3}-y^{3}z+x+x^{13}y=0.
$$
In a neighborhood  of the point $O_t$, we can consider $y$ and $z$ as local weighted coordinates such that $\mathrm{wt}(y)=2$ and $\mathrm{wt}(z)=3$.

Let $f\colon\widetilde{S}\to S$ be the weighted blow up at the singular point $O_t$ such that $\mathrm{wt}(y)=2$
and~\mbox{$\mathrm{wt}(z)=3$}.
Denote by $E$ the exceptional curve of the blow up $f$.
Then
$$
K_{\widetilde{S}_{45}}\sim_{\mathbb{Q}} f^{*}\big(K_{S_{45}}\big)-\frac{14}{19}E;
$$
$$
K_{\widetilde{S}_{81}}\sim_{\mathbb{Q}} f^{*}\big(K_{S_{81}}\big)-\frac{32}{37}E;
$$
$$
K_{\widetilde{S}_{117}}\sim_{\mathbb{Q}} f^{*}\big(K_{S_{117}}\big)-\frac{10}{11}E.
$$
The surface $S$ has two singular points in $E$.
One  is of type $\frac{1}{2}(1,1)$
and the other  is  of type~\mbox{$\frac{1}{3}(1,1)$}.
Denote the former one by $O_2$ and the latter one by $O_3$.
Observe
\[E^2=
\left\{%
\aligned
&-\frac{19}{6}\ \ \ \text{ on $\widetilde{S}_{45}$},\\%
&-\frac{37}{6}\ \ \ \text{ on $\widetilde{S}_{81}$},\\%
&-\frac{55}{6}\ \ \ \text{ on $\widetilde{S}_{117}$},\\%
\endaligned\right.%
\]
and $E\cong\mathbb{P}^1$.

Let $\widetilde{L}_{xz}$ and  $\widetilde{R}_x$  be the proper transforms of the curve $L_{xz}$  and $R_x$ to the surface $\widetilde{S}$, respectively.
Then
$$
\widetilde{L}_{xz}\sim_{\mathbb{Q}} f^{*}\big(L_{xz}\big)-\frac{3}{c}E, \ \ \ \widetilde{R}_x\sim_{\mathbb{Q}} f^{*}\big(R_x\big)-\frac{6}{c}E,
$$
where $c$ is the index of singularity $O_t$.
The intersection $E\cap\widetilde{L}_{xz}$ consists of the point $O_2$ and the intersection $E\cap\widetilde{R}_x$ consists of a single smooth point.
Note that $\widetilde{L}_{xz}\cdot E=\frac{1}{2}$ and the curves $E$ and $\widetilde{R}_x$ intersect transversally.

Recall that $D=aL_{xz}+bR_x+\Delta$.
Denote by $\widetilde{\Delta}$ be the proper transform of the $\mathbb{Q}$-divisor $\Delta$ on the surface $\widetilde{S}$.
Then
$$
\widetilde{\Delta}\sim_{\mathbb{Q}} f^{*}\big(\Delta\big)-mE
$$
for some non-negative rational number $m$. To estimate $m$, consider the intersection
$$
0\leqslant\widetilde{R}_{x}\cdot\widetilde{\Delta}=\widetilde{R}_{x}\cdot\Big(f^{*}\big(\Delta\big)-mE\Big)=R_{x}\cdot\Delta-m.$$
Applying  \eqref{equation:3-intersection}, we are able to obtain
\begin{equation}\label{equation:m}
m\leqslant\left\{\aligned
&\frac{6}{5\cdot 19}-\frac{3a}{19}+\frac{8b}{5\cdot 19}\leqslant \frac{6}{5\cdot 19}+\frac{8b}{5\cdot 19}\leqslant\frac{26}{285}
\ \ \ \text{ for $S_{45}$,}\\%
&\frac{8}{9\cdot 37}-\frac{3a}{37}+\frac{20b}{9\cdot 37}\leqslant \frac{8}{9\cdot 37}+\frac{20b}{9\cdot 37}\leqslant \frac{4}{111}\ \ \  \text{
for  $S_{81,}$}\\%
&
\frac{2}{11\cdot 13}-\frac{3a}{55}+\frac{32b}{13\cdot 55}\leqslant \frac{2}{11\cdot 13}+\frac{32b}{13\cdot 55}\leqslant\frac{634}{17875}
\ \ \  \text{
for $S_{117}$.}\\%
\endaligned\right.
\end{equation}

We now suppose that the log pair $(S,\frac{65}{64}D)$ is not log canonical at~$O_t$.
Put $\lambda=\frac{65}{64}$. Then
$$
K_{\widetilde{S}}+\lambda a\widetilde{L}_{xz}+\lambda b\widetilde{R}_{x}+\lambda\widetilde{\Delta}+\mu E\sim_{\mathbb{Q}}f^*\big(K_S+\lambda D\big),
$$
where $$\mu =
\left\{\aligned
&\frac{3\lambda a}{19}+\frac{6\lambda b}{19}+\lambda m+\frac{14}{19}\ \ \ \text{ for $S_{45}$,}\\%
&\frac{3\lambda a}{37}+\frac{6\lambda b}{37}+\lambda m+\frac{32}{37}\ \ \  \text{
for  $S_{81,}$}\\%
&
\frac{3\lambda a}{55}+\frac{6\lambda b}{55}+
\lambda m+\frac{10}{11}
\ \ \  \text{
for $S_{117}$.}\\%
\endaligned\right.
$$
Thus, the log pair
\begin{equation}
\label{equation:3-log-pull-back}
\left(\widetilde{S},\lambda a\widetilde{L}_{xz}+\lambda b\widetilde{R}_{x}+\lambda\widetilde{\Delta}+\mu E\right)
\end{equation}
is not log canonical at some point $O\in E$.
Using \eqref{equation:m} and bounds for $b$, we can easily check
$$
\mu\leqslant
\left\{\aligned
&\frac{3\lambda a}{19}+\frac{6\lambda b}{19}+\frac{6\lambda}{95}-\frac{3a\lambda}{19}+\frac{8\lambda b}{95}+\frac{14}{19}=\frac{2\lambda b}{5}+\frac{6\lambda}{95}+\frac{14}{19}\leqslant 1\ \ \ \text{ for $S_{45}$,}\\%
&\frac{3\lambda a}{37}+\frac{6\lambda b}{37}+\frac{8\lambda}{9\cdot 37}-\frac{3\lambda a}{37}+\frac{20\lambda b}{9\cdot 37}+\frac{32}{37} =\frac{2\cdot 29\lambda b}{3\cdot 37}+\frac{8\lambda}{9\cdot 37}+\frac{32}{37}\leqslant 1\ \ \  \text{
for  $S_{81,}$}\\%
&
\frac{3\lambda a}{55}+\frac{6\lambda b}{55}+\frac{2\lambda}{11\cdot 13}
-\frac{3\lambda a}{55}+\frac{32\lambda b}{13\cdot 55}+\frac{10}{11}=
\frac{2\lambda b}{13}+\frac{2\lambda}{11\cdot 13}+\frac{10}{11}\leqslant 1
\ \ \  \text{
for $S_{117}$.}\\%
\endaligned\right.
$$
If $O=E\cap \widetilde{R}_x$, then we apply Lemma~\ref{lemma:inversion-of-adjunction} to \eqref{equation:3-log-pull-back} and $E$. This yields
$$
\lambda b+\lambda\widetilde{\Delta}\cdot E=\Big(\lambda b\widetilde{R}_x+\lambda\widetilde{\Delta}\Big)\cdot E>1,
$$
so that we  could obtain absurd inequalities
$$
\frac{64}{65}=\frac{1}{\lambda}<b+\widetilde{\Delta}\cdot E= b+\frac{cm}{6}\leqslant
\left\{\aligned
&\frac{1}{3}+\frac{19}{60}=\frac{13}{20}\ \ \ \text{ for $S_{45}$,}\\%
&\frac{1}{5}+\frac{37}{6\cdot 25}=\frac{67}{150}\ \ \  \text{
for  $S_{81,}$}\\%
&
\frac{12}{25}+\frac{1}{3}= \frac{61}{75}
\ \ \  \text{
for $S_{117}$,}\\%
\endaligned\right.
$$
where $c$ is the index of the singularity $O_t$.
The inequality
$$
\widetilde{\Delta}\cdot E= \frac{cm}{6}\leqslant
\left\{\aligned
&\frac{13}{45} \ \ \ \text{ for $S_{45}$,}\\%
&\frac{2}{9}\ \ \  \text{
for  $S_{81,}$}\\%
&
\frac{317}{975}
\ \ \  \text{
for $S_{117}$.}\\%
\endaligned\right\}\leqslant \frac{1}{3\lambda} =\frac{64}{3\cdot 65}
$$
implies  that $O=O_2$.
However, using \eqref{equation:m} and Lemma~\ref{lemma:inversion-of-adjunction-orbifold} (applied to \eqref{equation:3-log-pull-back} and $E$),
we conclude that the log pair \eqref{equation:3-log-pull-back} is log canonical everywhere since
$$
\left(a\widetilde{L}_{xz}+\widetilde{\Delta}\right)\cdot E=\frac{a}{2}+\widetilde{\Delta}\cdot E= \frac{a}{2}+\frac{cm}{6}\leqslant
\left\{\aligned
&\frac{1}{5}+\frac{4b}{15}\leqslant\frac{13}{45}\ \ \ \text{ for $S_{45}$,}\\%
&\frac{4}{27}+\frac{10b}{27}\leqslant\frac{2}{9}\ \ \  \text{ for  $S_{81,}$}\\%
& \frac{5}{39}+\frac{16b}{39}\leqslant\frac{317}{975} \ \ \  \text{ for $S_{117}$,}\\%
\endaligned\right\} \leqslant \frac{1}{2\lambda}=\frac{32}{65}
$$
This completes the proof. \end{proof}

\begin{corollary}
\label{corollary:3}
The $\delta$-invariant of $S$ is at least $\frac{65}{64}$.
\end{corollary}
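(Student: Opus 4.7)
The plan is to follow exactly the pattern already used for Corollaries~\ref{corollary:1} and~\ref{corollary:5}, reducing the claim to a combination of Lemma~\ref{lemma:a-b-bound} and Proposition~\ref{proposition:3}. First I would fix a $\mathbb{Q}$-divisor $D$ on $S$ of $k$-basis type with $k\gg 0$ (so in particular $D\sim_{\mathbb{Q}}-K_S$), and decompose
\[
D=aL_{xz}+bR_x+\Delta,
\]
where $a,b\geqslant 0$ and the support of the effective $\mathbb{Q}$-divisor $\Delta$ contains neither $L_{xz}$ nor $R_x$. This is exactly the decomposition used throughout Section~\ref{section:case-c}.

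Next, I would invoke Lemma~\ref{lemma:a-b-bound} to obtain, for $k\gg 0$, upper bounds
\[
a\leqslant\tfrac{2}{5},\ \tfrac{1}{2},\ \tfrac{11}{20}\qquad\text{and}\qquad b\leqslant\tfrac{1}{3},\ \tfrac{1}{5},\ \tfrac{12}{25}
\]
on $S_{45}$, $S_{81}$, $S_{117}$ respectively. The lemma actually produces the weaker bounds $a\leqslant\frac{118}{315}+\epsilon_k$ etc., where $\epsilon_k\to 0$ as $k\to\infty$; since in each case the asymptotic constant is strictly smaller than the threshold required by Proposition~\ref{proposition:3}, absorbing $\epsilon_k$ into the inequality is routine and only requires $k$ to be chosen large enough.

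With these bounds in hand, Proposition~\ref{proposition:3} applies and shows that the log pair $(S,\tfrac{65}{64}D)$ is log canonical for all $k\gg 0$. By the standard characterization of the $\delta$-invariant via basis-type divisors (as used in the proofs of Corollaries~\ref{corollary:1} and~\ref{corollary:5}, cf.\ Theorem~\ref{theorem:delta} and the discussion of \cite{FujitaOdaka,BJ17}), this uniform log canonicity over $k$-basis type divisors yields
\[
\delta(S)\geqslant\tfrac{65}{64},
\]
as claimed. The only potential obstacle is to verify that the $\epsilon_k$ correction in Lemma~\ref{lemma:a-b-bound} is harmless, and this is immediate from a direct numerical comparison of the bounds in Lemma~\ref{lemma:a-b-bound} with the hypotheses of Proposition~\ref{proposition:3}; the heavy lifting has already been carried out in those two earlier results.
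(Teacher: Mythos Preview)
Your proposal is correct and follows essentially the same approach as the paper, which simply states that the corollary ``immediately follows from Proposition~\ref{proposition:3} and Lemma~\ref{lemma:a-b-bound}.'' Your added remark about absorbing the $\epsilon_k$ correction is a helpful elaboration, but the underlying argument is identical.
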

\begin{proof}
This immediately follows from Proposition~\ref{proposition:3} and Lemma~\ref{lemma:a-b-bound}.
\end{proof}

\end{document}